\documentclass[12pt, reqno]{amsart}

\usepackage{fullpage}
\usepackage{amsmath,amssymb,amsthm,mathtools}
\usepackage{booktabs}
\usepackage{hyperref}
\usepackage{colonequals}

\hypersetup{colorlinks=true,linkcolor=blue,citecolor=blue,urlcolor=blue}

\newtheorem{theorem}{Theorem}
\newtheorem{lemma}[theorem]{Lemma}
\newtheorem{remark}[theorem]{Remark}

\newtheorem{cor}[theorem]{Corollary}
\newtheorem{question}[theorem]{Question}
\newtheorem{answer}[theorem]{Answer}

\theoremstyle{definition}
\newtheorem{algorithm}[theorem]{Algorithm}
\newtheorem{example}[theorem]{Example}
\newtheorem{definition}[theorem]{Definition}
\newtheorem{conj}[theorem]{Conjecture}
\newtheorem{urlpage}{GitHub Link}

\numberwithin{theorem}{section}

\newcommand{\R}{\mathbb R}
\newcommand{\C}{\mathbb C}
\newcommand{\D}{\mathbb D}
\newcommand{\E}{\mathbb E}

\newcommand{\dd}{\,d}
\newcommand{\abs}[1]{\left|#1\right|}
\newcommand{\norm}[1]{\left\|#1\right\|}
\newcommand{\ip}[2]{\left\langle #1,#2\right\rangle}
\newcommand{\sgn}{\operatorname{sign}}
\newcommand{\Repart}{\Re}

\newcommand{\KG}{K_G}

\newcommand{\erf}{\operatorname{erf}}
\renewcommand{\Subset}{\subset}

\title{An Upper Bound on Grothendieck's Constant}
\author{Steven Heilman}
\date{\today}

\thanks{
Email: stevenmheilman@gmail.com\\
Supported by NSF Grant CCF AF 2448108.\\
MSC Classification: 15A60, 90C27, 42C10, 68Q17\\
Keywords: rounding scheme, Grothendieck constant, semidefinite programming\\
Department of Mathematics, University of Southern California, Los Angeles, CA 90089}

\begin{document}

\begin{abstract}
We show that Grothendieck's real constant $K_G$ can be upper bounded by projecting vectors onto a random plane through the origin and thresholding a degree five Hermite polynomial.  This resolves a conjecture of Braverman-Makarychev-Makarychev-Naor from 2011, who required an extra randomization step in their rounding scheme and proved $K_G<\frac{\pi}{2\log(1+\sqrt{2})}-10^{-500}$.  As a corollary of our result, we prove the bound $K_G<\frac{\pi}{2\log(1+\sqrt{2})}-10^{-217}$ by thresholding degree three Hermite polynomials in the plane.  We finally give a rigorous computer-assisted proof that $K_G<\frac{\pi}{2\log(1+\sqrt{2})}-10^{-5}$ using interval arithmetic and degree three Hermite polynomial thresholding.
\end{abstract}
%These new $K_G$ upper bounds improve on the previous upper bound from $2011$.

\maketitle

\section{Introduction}

Grothendieck's real constant $K_{G}$ is the infimum over all $K\in(0,\infty)$ such that, for all positive integers $m,n$ and for every real $m\times n$ matrix $(a_{ij})$, we have
\begin{equation}\label{groeq}
\max_{\substack{x_{1},\ldots,x_{m}\\y_{1},\ldots,y_{n}}\in S^{m+n-1}}\sum_{i=1}^{m}\sum_{j=1}^{n}a_{ij}\langle x_{i},y_{j}\rangle
\leq K\cdot
\max_{\substack{\epsilon_{1},\ldots,\epsilon_{m}\\\delta_{1},\ldots,\delta_{n}}\in\{-1,1\}}\sum_{i=1}^{m}\sum_{j=1}^{n}a_{ij}\epsilon_{i}\delta_{j},
\end{equation}
where $\langle x,y\rangle=\sum_{i=1}^{d}x_{i}y_{i}$ for all $x,y\in\R^{d}$ and $S^{d-1}=\{x\in\R^{d}\colon\langle x,x\rangle=1\}$ for any $d\geq1$.

Inequality \eqref{groeq} was originally stated as an inequality of two different tensor norms \cite{groth53} (see also \cite[Section 3]{pisier12}), though the discretized formulation \eqref{groeq} was proven in \cite{linden77}.

Determining the exact value of $K_{G}$ remains a significant open problem since it was first posited in \cite{groth53}.  We can rephrase the problem of finding the constant $K_{G}$ as: what is the ``best'' way to ``round'' the vectors $x_{1},\ldots,x_{m},y_{1},\ldots,y_{n}$ to $\pm1$?  There are many good references on \eqref{groeq} and its
connections to combinatorics, functional analysis, Banach space theory,
operator algebras, the Connes embedding problem, theoretical computer science,
and quantum mechanics; see, for example, \cite{pisier12,khot12}.  Note also that Grothendieck's constant is the maximal quantum violation in Bell's inequality from quantum mechanics \cite{tsirelson87}.  Also, assuming the Unique Games Conjecture \cite{khot02}, it is NP-hard to approximate the right side of \eqref{groeq} within any constant smaller than Grothendieck's constant $K_{G}$ \cite{ragh09}. 

% We briefly mention some interpretations of Grothendieck's constant in quantum information and theoretical computer science:
% \begin{itemize}
% \item Grothendieck's constant is the maximal quantum violation in Bell's inequality from quantum mechanics \cite{tsirelson87}.  That is, Grothendieck's inequality is a reformulated version of Bell's inequality.
% \item Assuming the Unique Games Conjecture \cite{khot02}, it is NP-hard to approximate the right side of \eqref{groeq} within any constant smaller than Grothendieck's constant $K_{G}$ \cite{ragh09}.  The left side of \eqref{groeq} is a semidefinite program which can be computed efficiently, while the right side is an integer program.  So, \eqref{groeq} itself efficiently approximates the right side of \eqref{groeq} using its left side, within a constant factor $K_{G}$ \cite{alon04}.  The cut norm and the MAX-CUT problem are special cases of the right side of \eqref{groeq} \cite{alon04}.
% %\item Inequality \eqref{groeq} implies a polynomial time approximation algorithm for the cut norm problem \cite{alon04}, which is the best possible polynomial time approximation assuming the Unique Games Conjecture \cite{ragh09}.  
% \end{itemize}
% %\cite{braverman13}
% % pisier survey

%\subsection{Upper Bounds on \texorpdfstring{$K_{G}$}{KG}}
Grothendieck \cite{groth53} originally proved that $K_{G}\leq\sinh(\pi/2)\approx2.3013$.  In \cite{krivine77}, Krivine improved this bound to $K_{G}\leq\frac{\pi}{2\log(1+\sqrt{2})}\approx 1.78221397819$ (see Example \ref{sinex} below), and it was generally believed that this inequality should be an equality \cite{konig01}.  However, it was then shown in \cite{braverman13} that there is a $c'>0$ such that 
\begin{equation}\label{bmmneq}
K_{G}<\frac{\pi}{2\log(1+\sqrt{2})} - c'.
\end{equation}
An effective $c'$ was proven but not specified in \cite{braverman13}; an inspection of the argument seems to give $c'=10^{-500}$.  Krivine's argument \cite{krivine77} shows that, after ``preprocessing'' the vectors $x_{1},\ldots,x_{m},y_{1},\ldots,y_{n}$ by nonlinearly mapping them to a different Hilbert space (Fock space), we can then map those vectors to $\pm1$ by projecting them onto a Gaussian random vector, and taking the sign of this projected value.  (The nonlinear preprocessing removes the nonlinearity that appears after projecting onto the Gaussian.)  The argument of \cite{braverman13} instead projects the preprocessed vectors onto a random plane through the origin, and then (with probability $0<p<1$) applies a perturbation of the sign function on this two-dimensional plane to ``round'' the vectors to $\pm1$ (and with probability $1-p$ applies the sign function).  This perturbation uses a degree 5 Hermite polynomial.

Also, rounding schemes of this form (projecting onto a random hyperplane through the origin and then thresholding, possibly with additional randomness) can obtain arbitrarily good approximations of $K_{G}$ \cite{regev14}.  In other words, finding the exact value of $K_{G}$ reduces to finding the best ``rounding scheme'' for vectors $x_{1},\ldots,x_{m},y_{1},\ldots,y_{n}$ in \eqref{groeq} (where the rounding scheme itself might need additional randomness).

The best known lower bounds for $K_G$ were recently shown in \cite{heilman26,jones26} to be around $1.6769$.  In particular, \cite{jones26} showed a $10^{-12}$ improvement over the bound from \cite{davie84,reeds93}.

\subsection{Krivine Rounding Schemes}

We now describe the rounding scheme we will use to prove the best known constant in Grothendieck's inequality.

\begin{definition}[\textbf{Krivine rounding scheme}, \cite{braverman13}]\label{kdef}
Fix \(k \in \mathbb{N}\) and let \(f,g\colon\mathbb{R}^k \to \{-1,1\}\) be odd measurable functions. Let
\(G_1,G_2 \in \mathbb{R}^k\) be independent standard Gaussian random vectors, so that $G_1$ has density
$
x \mapsto (2\pi)^{-k/2}e^{-\|x\|_2^2/2}
$.
For any $t\in(-1,1)$, define the (complex correlation parameter) noise stability
\begin{equation}\label{one8}
\begin{aligned}
H_{f,g}(t)
&\colonequals
\mathbb{E}\left[
f\left(\frac{1}{\sqrt{2}}G_1\right)
g\left(
\frac{t}{\sqrt{2}}G_1
+
\frac{\sqrt{1-t^2}}{\sqrt{2}}G_2
\right)
\right] \\
&=
\frac{1}{\pi^k(1-t^2)^{k/2}}
\int_{\mathbb{R}^k}\int_{\mathbb{R}^k}
f(x)g(y)
\exp\left(
\frac{-\|x\|_2^2-\|y\|_2^2+2t\langle x,y\rangle}{1-t^2}
\right)
\,dx\,dy .
\end{aligned}
\end{equation}

Then \(H_{f,g}\) extends to an analytic function on the strip
$
\left\{ z \in \mathbb{C} \colon \Re(z) \in (-1,1) \right\}.
$
We shall call \(\{f,g\}\) a \textbf{Krivine rounding scheme} if \(H_{f,g}\)
is invertible on a neighborhood of the origin, and if we consider the
Taylor expansion
\begin{equation}\label{one9}
H_{f,g}^{-1}(z)=\sum_{j=0}^{\infty} \widehat{a}_{2j+1}z^{2j+1},
\end{equation}
then there exists \(c=c(f,g)\in(0,\infty)\) satisfying
\begin{equation}\label{one10}
\sum_{j=0}^{\infty} |\widehat{a}_{2j+1}|c^{2j+1}=1.
\end{equation}
Only odd Taylor coefficients appear in \eqref{one9} since \(H_{f,g}\),
and therefore also \(H_{f,g}^{-1}\), is odd.
\end{definition}

A Krivine rounding scheme yields an algorithmic proof of Grothendieck's inequality in the following way.

\begin{algorithm}[\textbf{Krivine rounding algorithm}, \cite{braverman13}]\label{kalg}
\hfill

\underline{Input}: Vectors $\{x_r\}_{r=1}^{m},\{y_s\}_{s=1}^{n}\subseteq S^{m+n-1}$, and $f,g\colon\R^{k}\to\{-1,1\}$, which define coefficients $\{\widehat{a}_{2j+1}\}_{j=0}^{\infty}$ by \eqref{one9} and a constant $c\colonequals c(f,g)$ by \eqref{one10}. 

\underline{Output}:  Random signs $\{\sigma_r\}_{r=1}^{m},\{\tau_s\}_{s=1}^{n}\subseteq \{-1,1\}$.

\underline{Step 0} (Notation).
Consider the Hilbert space
\[
\mathcal{H}
=
\bigoplus_{j=0}^{\infty}
\left(\mathbb{R}^{m+n}\right)^{\otimes(2j+1)} .
\]
For any \(x\in S^{m+n-1}\) we can then define two vectors
\(I(x),J(x)\in\mathcal{H}\) by
\begin{equation}\label{one1213}
\begin{aligned}
I(x)
&\colonequals
\sum_{j=0}^{\infty}
|\widehat{a}_{2j+1}|^{1/2}
c^{(2j+1)/2}
x^{\otimes(2j+1)}\\
J(x)
&\colonequals\sum_{j=0}^{\infty}
\operatorname{sign}(\widehat{a}_{2j+1})
|\widehat{a}_{2j+1}|^{1/2}
c^{(2j+1)/2}
x^{\otimes(2j+1)},
\end{aligned}
\end{equation}

The choice of \(c\) was made in order to ensure
that \(I(x)\) and \(J(x)\) are unit vectors in \(\mathcal{H}\). Moreover,
the definitions \eqref{one1213} were made so that the following
identity holds:
\begin{equation}\label{one14}
\forall x,y\in S^{m+n-1},
\qquad
\langle I(x),J(y)\rangle_{\mathcal{H}}
\stackrel{\eqref{one9}}{=}
H_{f,g}^{-1}\bigl(c\langle x,y\rangle\bigr).
\end{equation}

\underline{Step 1} (Preprocessing the vectors).  Transform the
initial unit vectors
\[
\{x_r\}_{r=1}^{m},\{y_s\}_{s=1}^{n}\subseteq S^{m+n-1}
\]
to vectors
\[
\{u_r\}_{r=1}^{m},\{v_s\}_{s=1}^{n}\subseteq S^{m+n-1}
\]
satisfying the identities
\begin{equation}\label{one15}
\forall\,(r,s)\in\{1,\ldots,m\}\times\{1,\ldots,n\},
\qquad
\langle u_r,v_s\rangle
=
\langle I(x_r),J(y_s)\rangle_{\mathcal{H}}
\stackrel{\eqref{one14}}{=}
H_{f,g}^{-1}\bigl(c\langle x_r,y_s\rangle\bigr).
\end{equation}
As explained in \cite{alon04}, these new vectors can be computed efficiently
provided \(H_{f,g}^{-1}\) can be computed efficiently; this simply
amounts to computing a Cholesky decomposition.

\underline{Step 2} (Random projection).
Let \(G\colon\mathbb{R}^{m+n}\to\mathbb{R}^{k}\) be a random
\(k\times(m+n)\) matrix whose entries are i.i.d. standard Gaussian
random variables. Let
$
\sigma_1,\ldots,\sigma_m,\tau_1,\ldots,\tau_n\in\{-1,1\}
$
be random signs defined by
\begin{equation}\label{one16}
\forall\,(r,s)\in\{1,\ldots,m\}\times\{1,\ldots,n\},
\qquad
\sigma_r
\colonequals
f\left(\frac{1}{\sqrt{2}}Gu_r\right)
\quad\text{and}\quad
\tau_s
\colonequals
g\left(\frac{1}{\sqrt{2}}Gv_s\right).
\end{equation}
\end{algorithm}

A Krivine rounding scheme automatically upper bounds $K_G$ via the constant $c$ from \eqref{one10}.

\begin{cor}[{\cite{braverman13}}]\label{ckcor}
\emph{Let \(f,g\colon\mathbb{R}^{k}\to\{-1,1\}\) be a Krivine
rounding scheme. Then}
\[
K_G\le \frac{1}{c(f,g)}.
\]
\end{cor}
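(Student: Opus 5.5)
We run Algorithm \ref{kalg} and compute the expected value of the rounded objective. Fix the matrix $(a_{rs})$ and unit vectors $\{x_r\},\{y_s\}$ that attain (or nearly attain) the maximum on the left side of \eqref{groeq}. Produce $\{u_r\},\{v_s\}$ as in Step 1 and random signs $\sigma_r,\tau_s$ as in Step 2. Since the right side of \eqref{groeq} dominates every realization of $\sum a_{rs}\sigma_r\tau_s$, it also dominates its expectation. It therefore suffices to show
\[
\E\sum_{r,s}a_{rs}\sigma_r\tau_s = c\sum_{r,s}a_{rs}\langle x_r,y_s\rangle .
\]

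\textbf{Step 1: realizability.} The vectors $I(x_r),J(y_s)$ are unit vectors in $\mathcal H$. Indeed, $\langle x^{\otimes(2j+1)},x^{\otimes(2j+1)}\rangle=1$, so $\|I(x)\|^2=\|J(x)\|^2=\sum_j|\widehat a_{2j+1}|c^{2j+1}=1$ by \eqref{one10}. Their pairwise inner products are given by \eqref{one14}, which follows from $\langle x^{\otimes p},y^{\otimes p}\rangle=\langle x,y\rangle^p$ together with \eqref{one9}. We must also check that the series converges at $c\langle x,y\rangle$. This holds because $|c\langle x,y\rangle|\le c$ and the coefficient series converges absolutely at radius $c$ by \eqref{one10}. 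There are only $m+n$ vectors, so they span a subspace of dimension at most $m+n$. Hence they can be isometrically embedded as $u_r,v_s\in S^{m+n-1}$ (Gram matrix / Cholesky), which gives \eqref{one15}.

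\textbf{Step 2: the correlation of one pair.} Fix $r,s$ and let $\rho=\langle u_r,v_s\rangle$. The rows of $G$ are i.i.d. standard Gaussian vectors, so the pair $(Gu_r,Gv_s)\in\R^k\times\R^k$ is jointly Gaussian. Each block is standard Gaussian, and the cross-covariance is $\rho I_k$. This is the same law as $(G_1,\rho G_1+\sqrt{1-\rho^2}G_2)$. Comparing with \eqref{one8}, we get $\E[\sigma_r\tau_s]=H_{f,g}(\rho)$. Substituting \eqref{one15},
\[
\E[\sigma_r\tau_s]=H_{f,g}\bigl(H_{f,g}^{-1}(c\langle x_r,y_s\rangle)\bigr)=c\langle x_r,y_s\rangle .
\]

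\textbf{The main obstacle} is justifying the composition $H_{f,g}\circ H_{f,g}^{-1}=\mathrm{id}$ on the whole interval $[-c,c]$. Definition \ref{kdef} only gives local invertibility near $0$, whereas the Taylor series \eqref{one9} converges on $[-c,c]$. My approach is analytic continuation:
\begin{itemize}
\item The function $\phi(z)=\sum_j\widehat a_{2j+1}z^{2j+1}$ is analytic on $|z|<c$ and continuous on $|z|\le c$. On $[-c,c]$ it is real-valued with $|\phi|\le1$.
\item Interior points: if some $t$ with $|t|<c$ had $|\phi(t)|=1$, then $\phi(t)$ would have the same modulus as $\sum_j|\widehat a_{2j+1}||t|^{2j+1}<1$, a contradiction. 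So $\phi$ maps $(-c,c)$ into $(-1,1)$.
\item Hence $H_{f,g}\circ\phi$ is analytic near $(-c,c)$, since $H_{f,g}$ is analytic on the strip $\{\Re z\in(-1,1)\}$.
\item $H_{f,g}\circ\phi$ equals the identity near $0$, so by the identity theorem it equals the identity on all of $(-c,c)$.
\end{itemize}
The endpoints $\pm c$ are handled by continuity, provided $|\phi(\pm c)|<1$ or $H_{f,g}$ extends continuously to $\pm1$. The latter is true, since $H_{f,g}(t)\to\E[f(G_1/\sqrt2)g(\pm G_1/\sqrt2)]$ by dominated convergence. Alternatively, one can first apply the argument to $c'<c$ and let $c'\to c$.

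\textbf{Conclusion.} Summing Step 2 over $(r,s)$ gives
\[
c\cdot\mathrm{LHS}=\E\sum_{r,s}a_{rs}\sigma_r\tau_s\le\max_{\epsilon,\delta}\sum_{r,s}a_{rs}\epsilon_r\delta_s .
\]
This is \eqref{groeq} with $K=1/c$, so $K_G\le 1/c(f,g)$.
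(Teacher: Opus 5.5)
Your proof is correct and follows the paper's route: bound the $\pm1$ optimum by the expected rounded objective, use rotational invariance to get $\E[\sigma_r\tau_s]=H_{f,g}(\langle u_r,v_s\rangle)$, and substitute \eqref{one15}. Your unit-vector/Gram-matrix check and your identity-theorem argument that $H_{f,g}\circ H_{f,g}^{-1}$ is the identity on all of $[-c,c]$, not just near $0$, supply details that the paper's final equality uses implicitly. The one loose spot is the endpoint case: for a merely measurable $g$, continuity of $H_{f,g}$ at $\pm1$ does not follow from dominated convergence alone, because $g$ may be discontinuous. You can fix this by approximating $g$ in $L^2$ of Gaussian measure. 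Alternatively, use your $c'<c$ route, padding each vector to unit length with its own fresh orthogonal coordinate.
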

\begin{proof}[Proof, \cite{braverman13}]
Having obtained the random signs
\(\sigma_1,\ldots,\sigma_m,\tau_1,\ldots,\tau_n\in\{-1,1\}\) as in
\eqref{one16}, for every \(m\times n\) matrix \((a_{rs})\) we have
\[
\begin{aligned}
&\max_{\varepsilon_1,\ldots,\varepsilon_m,\delta_1,\ldots,\delta_n\in\{-1,1\}}
\sum_{r=1}^{m}\sum_{s=1}^{n}
a_{rs}\varepsilon_r\delta_s
\ge
\mathbb{E}\left[
\sum_{r=1}^{m}\sum_{s=1}^{n}
a_{rs}\sigma_r\tau_s
\right] \\
&\hspace{3cm}\stackrel{(*)}{=}
\mathbb{E}\left[
\sum_{r=1}^{m}\sum_{s=1}^{n}
a_{rs}H_{f,g}\bigl(\langle u_r,v_s\rangle\bigr)
\right] 
\stackrel{\eqref{one15}}{=}
c(f,g)
\sum_{r=1}^{m}\sum_{s=1}^{n}
a_{rs}\langle x_r,y_s\rangle,
\end{aligned}
\]
where \((*)\) follows by rotational invariance from \eqref{one16} and
\eqref{one8}.% We have thus proved the following corollary, which yields a
%systematic way to bound the Grothendieck constant from above.
\end{proof}
\begin{example}\label{sinex}
Let $k=1$ and let $f(x)=g(x)=\mathrm{sign}(x)$ for all $x\in\R$.  Then $H_{f,g}(z)=\frac{2}{\pi}\arcsin(z)$, $H_{f,g}^{-1}(z)=\sin(\pi z/2)$, $\widehat{a}_{2j+1}=(-1)^j(\pi/2)^{2j+1}/(2j+1)!$, so $c=c(f,g)$ satisfies
$$1=\sum_{j=0}^{\infty}|\widehat{a}_{2j+1}|c^{2j+1}
=\sum_{j=0}^{\infty}\frac{(\pi/2)^{2j+1}}{(2j+1)!}c^{2j+1}
=\sinh(\pi c/2).$$
So, $c=\frac{2}{\pi}\sinh^{-1}(1)=\frac{2}{\pi}\log(1+\sqrt{2})$, and Corollary \ref{ckcor} gives Krivine's bound $K_{G}\leq\frac{\pi}{2\log(1+\sqrt{2})}$.
\end{example}
% sinh(x) = (e^x + e^-x)/2 =1 when
%  x = log(1+sqrt(2)), since
%  e^x - e^-x = 1+sqrt(2) - 1/(1+sqrt(2))
%   = 1+sqrt{2} - (sqrt(2)-1) = 2
%

\subsection{The Proof of \texorpdfstring{\cite{braverman13}}{Braverman-Makarychev-Makarychev-Naor}}

The main result of \cite{braverman13} is \eqref{bmmneq}.  The proof of \eqref{bmmneq} in \cite{braverman13} did not directly use Corollary \ref{ckcor} or Algorithm \ref{kalg}.  Instead, they fix $k=2$ and some small $0<p<1$ and they modify \eqref{one16} so that, with probability $1-p$, \eqref{one16} is used to define $\sigma_1,\ldots,\sigma_m,\tau_1,\ldots,\tau_n$ with $f(x)=g(x)=\mathrm{sign}(x_2)$ $\forall$ $x\in\R^2$, and with probability $p$ \eqref{one16} is used with $f(x)=g(x)=\mathrm{sign}(x_2 - \eta h_5(x_1))$ $\forall$ $x\in\R^2$ where $\eta>0$ is small and $h_5$ denotes a fifth degree Hermite polynomial, defined in \eqref{eq:h5}. 
%(See the proof of Theorem 1.1 in \cite{braverman13} for more details.)  

In other words, the proof of \eqref{bmmneq} requires an extra randomization step.  This randomization and extra parameter $p$ are needed for technical reasons, to control the inverse of $H_{f,g}$.

However, this extra randomization step should be unnecessary to prove \eqref{bmmneq}, due to Corollary \ref{ckcor}.  Krivine rounding schemes themselves are in fact sufficient to prove Grothendieck's inequality \eqref{groeq} with its sharp constant (for $k\to\infty$ \cite{regev14}, with possibly additional randomness), but finding an explicit example of a Krivine rounding scheme that yields an explicit bound such as \eqref{bmmneq} remained an open problem.  Consequently, \cite{braverman13} conjectured the existence of the following specific Krivine rounding scheme.

\begin{conj}[{\cite[Conjecture 5.5]{braverman13}}]\label{conj0}
$\exists$ $\eta>0$ such that the functions $f_\eta(x)=g_\eta(x)=\mathrm{sign}(x_2 - \eta h_5(x_1))$ $\forall$ $x\in\R^2$ form a Krivine rounding scheme with $c(f_\eta,f_\eta)>\frac{2}{\pi}\log(1+\sqrt{2})$.  That is, \eqref{bmmneq} can be proven without the auxiliary randomization step and parameter $0<p<1$.
\end{conj}

Conjecture \ref{conj0} could be rephrased as asking: does there exist a Krivine rounding scheme $f,g\colon\R^{2}\to\{-1,1\}$ with $c(f,g)>\frac{2}{\pi}\log(1+\sqrt{2})$?  Note that \cite{braverman13} did not find such a Krivine rounding scheme, nor did \cite{regev14}.  (Note also that the Krivine rounding schemes of \cite{regev14} may require additional randomness beyond the definition of Krivine rounding scheme, as in the main result of \cite{braverman13}.)  Moreover, \cite{braverman13} showed in \cite[Lemma 2.4]{braverman13} that every Krivine rounding scheme $f,g\colon\R\to\{-1,1\}$ satisfies $c(f,g)\leq\frac{2}{\pi}\log(1+\sqrt{2})$ (with equality well known from Example \ref{sinex}).  So, a $k$-dimensional domain with $k\geq2$ is necessary in order to find $f,g\colon\R^{k}\to\{-1,1\}$ with $c(f,g)>\frac{2}{\pi}\log(1+\sqrt{2})$.

\subsection{Our Contribution}

Our first main result is to prove Conjecture \ref{conj0}.

\begin{theorem}[Main]\label{mainthm}
Conjecture \ref{conj0} holds.
\end{theorem}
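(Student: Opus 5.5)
We would prove Theorem \ref{mainthm} by a perturbative analysis in $\eta$ of the odd analytic function $H_\eta\colonequals H_{f_\eta,f_\eta}$. Write $H_\eta(t)=\frac{2}{\pi}\arcsin(t)+\eta\,A(t)+\eta^{2}B(t)+O(\eta^{3})$, uniformly for $t$ in a complex disc $\{|z|\le r\}$ with $r<1$ slightly larger than the relevant radius. When $\eta=0$ the function $f_0(x)=\sgn(x_2)$ depends on one Gaussian coordinate, so $H_0(t)=\frac{2}{\pi}\arcsin t$, as in Example \ref{sinex}. The first-order term should vanish or be harmless by symmetry: $h_5(x_1)$ is odd in $x_1$, while $\sgn(x_2)$ is even in $x_1$. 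We would then compute $B$, or the first nonvanishing correction, using the Hermite expansion of $f_\eta$. Writing $\sgn(x_2-\eta h_5(x_1))$ and expanding in $\eta$ produces terms $\delta(x_2)h_5(x_1)$ and $\delta'(x_2)h_5(x_1)^2$. The noise-stability pairing of $h_5(x_1)$ with itself at correlation $t$ gives $t^{5}\cdot 5!$ (suitably normalized), and the $x_2$ factors give explicit power series in $t$. The result is that $H_\eta(t)=\frac{2}{\pi}\arcsin t+\eta^{2}Q(t)+O(\eta^{3})$ with $Q$ an explicit odd analytic function, whose leading behavior near $t=0$ is a combination of $t$ and $t^{5}$-type terms.

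Next we invert. Since $H_\eta'(0)\neq0$ for small $\eta$, $H_\eta^{-1}$ exists near $0$, and by Lagrange inversion or the implicit function theorem
\[
H_\eta^{-1}(z)=\sin(\pi z/2)-\eta^{2}\,\frac{\pi}{2}\cos(\pi z/2)\,Q\bigl(\sin(\pi z/2)\bigr)+O(\eta^{3}).
\]
By Cauchy estimates, this expansion holds coefficientwise with an error $O(\eta^{3}R^{-(2j+1)})$ for a fixed radius $R$ larger than $c_0\colonequals\frac{2}{\pi}\log(1+\sqrt2)$. Here we need the singularities of $\arcsin$ at $\pm1$, pulled back through the relevant maps, to lie outside the radius used. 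Define $F_\eta(c)=\sum_j|\widehat a_{2j+1}(\eta)|c^{2j+1}$. At $\eta=0$ we have $F_0(c)=\sinh(\pi c/2)$, which is strictly increasing with $F_0(c_0)=1$. Because the $\eta=0$ coefficients $(-1)^j(\pi/2)^{2j+1}/(2j+1)!$ are all nonzero, for small $\eta$ the signs do not change on any finite range of $j$, and the tail is controlled by the Cauchy bounds. Hence
\[
F_\eta(c_0)=1+\eta^{2}\sum_j(-1)^j\,b_{2j+1}c_0^{2j+1}+o(\eta^{2}),
\]
where $b_{2j+1}$ are the Taylor coefficients of $-\frac{\pi}{2}\cos(\pi z/2)Q(\sin(\pi z/2))$. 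We then compute this sum in closed form: it equals $-\frac{\pi}{2}\cos(\pi w/2)Q(\sin(\pi w/2))$ evaluated at $w=ic_0$ (up to a factor of $i$), where $\sin(\pi i c_0/2)=i\sinh(\pi c_0/2)=i$. So the sign of the correction is essentially the sign of an explicit expression in $Q(i)$, the Krivine correction evaluated at the imaginary point $i$. If this correction is negative, then $F_\eta(c_0)<1$, and by monotonicity and continuity there is a root $c(f_\eta,f_\eta)>c_0$ of $F_\eta(c)=1$. That proves the conjecture.

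The main obstacle is this final sign computation, together with justifying the uniform analytic control. For the analytic control, the $\eta$-expansion must hold on a complex disc reaching $t=\pm i$ (beyond the real interval), and the remainder must be small uniformly so that the absolute-value sum is differentiable in $\eta^{2}$. We would get this by writing $H_\eta$ through its Hermite coefficients, $H_\eta(t)=\sum_S\widehat{f_\eta}(S)^2 t^{|S|}$, and bounding the Hermite weights of $f_\eta-f_0$ at level $d$ by $O(\eta\,\rho^{d})$, using Gaussian decay. For the sign, the explicit value of $Q(i)$ involves the level-$5$ weight in $x_1$ coupled with the arcsin-series in $x_2$; we expect the degree-$5$ choice is exactly what makes it negative, mirroring the computation in \cite{braverman13}. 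We would evaluate this quantity in closed form, or failing that certify it numerically with interval arithmetic. If the $\eta^{2}$ term turned out to vanish, we would instead carry the expansion to the next order.
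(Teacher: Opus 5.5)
Your architecture matches the paper's: expand $H_\eta$ in $\eta$, invert, and compare the absolute coefficient sum at $c_0$ with the alternating sum, i.e.\ with the inverse evaluated at $iL$, where $\sin(iL)=i$. However, the step you call the main obstacle is vacuous, and the real difficulty sits exactly where your argument stops. With $q=\eta^2$, the first-order correction is $Q(z)=\frac{2(z^5-z)}{\sqrt\pi\sqrt{1-z^2}}$, and since $i^5=i$ we get $Q(i)=0$. Equivalently, the first-order term of the inverse, $\Psi(z)=\frac{2}{\sqrt\pi}(\sin z-\sin^5 z)$, vanishes at $z=iL$. So the $\eta^2$ coefficient in your expansion of $F_\eta(c_0)$ is exactly $0$; this vanishing is the reason $h_5$ is chosen, not a negative sign. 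Your argument therefore only yields $F_\eta(c_0)=1+o(\eta^2)$, which decides nothing.

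All the content is at order $\eta^4$, which you defer to one sentence. There you need two things. First, the $\eta^4$ term of $H_\eta$ at the single point $i$: \cite[Theorem 4.4]{braverman13} gives $4\pi H_\eta(i)/i=4\pi L+1600\sqrt2\,\eta^4+O(\eta^6)$. Combined with $B_q(iL)-i=O(q^2)$ and a Taylor expansion of $\widetilde H_q$ at $i$, this gives $B_q(iL)=i-\frac{800i}{\pi}q^2+O(q^3)$, without ever computing the second-order inverse correction as a function. Second, the comparison $A_q(L)=B_q(iL)/i+o(q^2)$, with error $o(\eta^4)$ rather than $o(\eta^2)$.

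This comparison is where your mechanism fails. Write the inverse as $B_q(z)=\sum a_n(q)z^n$ with $B_0=\sin$, so that $F_\eta(c_0)=\sum\abs{a_n(q)}L^n$, and set $T=\log(1/q)$. Your plan is sign stability on a finite or slowly growing range of indices, plus a Cauchy bound on $a_n-b_n$. That bound is only $O(qR^{-n})$, because the first-order correction is nonzero as a function; only its value at $iL$ vanishes. So the tail beyond $J$ is $O(q(L/R)^J)$, which is $o(q^2)$ only if $J$ grows linearly in $T$.

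At such indices the alternating pattern is not available. The first-order coefficients are $qc_n$ with $c_n=\frac{(-1)^m}{8\sqrt\pi\,n!}(6+5\cdot3^n-5^n)$. For $n\ge5$ these have sign opposite to $b_n=\frac{(-1)^m}{n!}$ and relative size about $q5^n$. Hence the explicit part $b_n+qc_n$ reverses sign once $n\gtrsim T/\log5$. Moreover, a bound $\abs{a_n-b_n}\le CqR^{-n}$ gives no sign information at all once $n\gtrsim T/\log T$. This is precisely the obstruction that led \cite{braverman13} to the auxiliary parameter $p$.

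The missing idea is to split $a_n=d_n+e_n$, where $d_n=b_n+qc_n$ and $\abs{e_n}\le C_\rho q^2\rho^{-n}$, treat $d_n$ exactly, and use $q$-dependent cutoffs.
\begin{itemize}
\item[(1)] For $n\le N_1\approx1.75\,T/\log T$, both $q5^n$ and $q^2n!\rho^{-n}$ are small, so $a_n$ alternates there.
\item[(2)] The $e_n$-tail beyond $N_1$ is $O(q^2(L/\rho)^{N_1})=o(q^2)$.
\item[(3)] The explicit part $d_n$ alternates up to $N_0\approx3.7\,T/\log T$. Beyond $N_0$ its contribution is at most $2\sum_{n>N_0}(L^n+q(5L)^n)/n!=o(q^2)$ by factorial decay.
\end{itemize}

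Separately, your proposed analytic control would not work as stated. $B_q$ must be analytic on a disc of radius $\rho>L$, so $H_\eta$ must be controlled on $\sin(\rho\D)$, which contains points of modulus $\sinh\rho>1$. A disc $\{\abs{z}\le r\}$ with $r<1$ misses $i$. Also, the series $\sum_S\widehat{f_\eta}(S)^2t^{\abs{S}}$ has radius of convergence $1$, so Hermite-weight bounds cannot reach that region. The paper instead works with the Gaussian integral on the strip $\{\abs{\Repart z}<1\}$ and Taylor-expands $G_z(a,b)$ in the threshold variables.
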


Although an exact constant $c'>0$ is not specified for \eqref{bmmneq} in \cite{braverman13}, it seems that they showed $K_G<\frac{\pi}{2\log(1+\sqrt{2})}-10^{-500}$.  In removing their unnecessary extra parameter $p$, we then deduce an improved bound on $K_G$ using $f_\eta(x)=g_\eta(x)=\mathrm{sign}(x_2 - \eta h_5(x_1))$ $\forall$ $x\in\R^2$ in Definition \ref{kdef} with $\eta>0$ small.  That is, we show:

\begin{equation}\label{kg1}
K_G<\frac{\pi}{2\log(1+\sqrt{2})}-10^{-389}.
\end{equation}

As suggested in \cite{braverman13}, if we use instead $f_\eta(x)=\mathrm{sign}(x_2 - \eta h_3(x_1))$ and $g_\eta(x)=\mathrm{sign}(x_2 + \eta h_3(x_1))$ $\forall$ $x\in\R^2$, we obtain a better bound

\begin{equation}\label{kg2}
K_G<\frac{\pi}{2\log(1+\sqrt{2})}-10^{-217}.
\end{equation}
The bounds \eqref{kg2} and \eqref{kg1} are proven in Section \ref{secquant}.

Although the upper bound \eqref{kg2} is certainly far from the true constant $K_G$, it indicates that the true value of $K_G$ should be well-approximated by two-dimensional Krivine rounding schemes.  In other words, the main advance of Theorem \ref{mainthm} is conceptual, since it was previously unclear what explicit bounds Krivine rounding schemes could produce.

As in \cite{braverman13}, the proof of \eqref{kg1} and \eqref{kg2} is perturbative, relying on taking a small $\eta$ parameter such that various Taylor approximation errors are small.  It is unlikely such a perturbative approach could reveal the first few decimal places of $K_G$, due to the large approximation errors and small $\eta$ values.  It is therefore natural to try to directly (numerically) search for Krivine rounding schemes with reasonable constants, leading to improved $K_G$ bounds in Corollary \ref{ckcor}.  

This approach was already considered in \cite{braverman13}:
\begin{question}[{\cite[Question 3.2]{braverman13}}]\label{altq}
Are the optimizers of K\"{o}nig's bilinear functional \eqref{bkdef} an alternating Krivine rounding scheme (is $\mathrm{sign}(\widehat a_{2j+1})=(-1)^j$ for all $j\geq0$ in \eqref{one9})?
\end{question}
If Question \ref{altq} were true, then these optimizers would yield optimal estimates of $K_G$ by \cite[Equation (22)]{braverman13}.
However, our numerical simulations answer this question in the negative.  The purported optimizer of K\"{o}nig's bilinear functional, nicknamed the ``tiger partition'' (see Figures \ref{fig:tiger-f} and \ref{fig:tiger-g} below), appears to not be an alternating Krivine rounding scheme, as defined in \cite[Definition 2.2]{braverman13}.  
\begin{answer}\label{answer1}
We observe numerically that Question \ref{altq} seems to be false in dimension $k=2$, since the tiger partition's fifth coefficient $\widehat a_5$ appears to be negative.
\end{answer}
In other words, optimizers of K\"{o}nig's functional may be irrelevant for producing good rounding schemes (with improved bounds on $K_G$).  We discuss this numerical observation in Section \ref{sectiger}.  Since the tiger partition itself is not defined rigorously in \cite{braverman13} (in fact finding an analytic description of it is mentioned as a question in \cite[Question 3.1]{braverman13}), it is unclear how to formalize Answer \ref{answer1} using e.g. interval arithmetic, so we instead use floating-point numerical operations in Answer \ref{answer1}.  In summary, the tiger partition of \cite{braverman13} might be a red herring for improved $K_G$ bounds.  It could still occur that optimizers of K\"{o}nig's bilinear functional in $\R^k$ for $k>2$ are alternating Krivine rounding schemes, though we find no evidence of this from numerical calculations when $k=3,4$.

But if the tiger partition is not a good rounding scheme, what Krivine rounding scheme is good?  Using rigorous interval arithmetic, we prove that $f_\eta(x)=\mathrm{sign}(x_2 - \eta h_3(x_1))$ and $g_\eta(x)=\mathrm{sign}(x_2 + \eta h_3(x_1))$ does lead to reasonably good bounds on $K_G$ when $k=2$.  That is, we replace the perturbative approach of \eqref{kg2} by numerically computing a high degree Taylor expansion of $H_{f,g}$ in \eqref{htaylor} (and its inverse) together with a high degree tail error bound.  Since $f_\eta, g_\eta$ are defined with Hermite polynomials, the coefficients in the Taylor expansion have relatively simple expressions in terms of one-dimensional integrals that allow precise estimation.  Accurately computing these high degree Taylor expansions avoids the losses inherent from purely low degree expansions used to prove \eqref{kg1} and \eqref{kg2}.  We therefore find the following.
\begin{theorem}\label{thm2}
Using rigorous interval arithmetic, we show
$$
K_G
<\frac{\pi}{2\log(1+\sqrt{2})}-10^{-5}.
$$
\end{theorem}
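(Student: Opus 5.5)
The plan is to apply Corollary \ref{ckcor} to the pair $f_\eta(x)=\mathrm{sign}(x_2-\eta h_3(x_1))$, $g_\eta(x)=\mathrm{sign}(x_2+\eta h_3(x_1))$ on $\R^2$ for one fixed, moderately sized $\eta$, chosen beforehand by a floating-point search. It then suffices to certify, in interval arithmetic, that $\{f_\eta,g_\eta\}$ is a Krivine rounding scheme and that $c(f_\eta,g_\eta)>\bigl(\frac{\pi}{2\log(1+\sqrt2)}-10^{-5}\bigr)^{-1}$. Since $\sum_j|\widehat a_{2j+1}|c^{2j+1}$ is increasing in $c$, the latter is equivalent to showing
\[
\Phi(c_0)\colonequals\sum_{j\ge0}|\widehat a_{2j+1}|c_0^{2j+1}<1
\]
at the explicit value $c_0$ equal to the right-hand side. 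This inequality also shows that the series converges at $c_0$, and hence that a solution $c\ge c_0$ of \eqref{one10} exists.

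\emph{Step 1: the Taylor coefficients of $H_{f,g}$.} Expand $f_\eta$ and $g_\eta$ in the Hermite basis adapted to the Gaussian $G_1/\sqrt2$. The noise-stability form \eqref{one8} then gives $H_{f,g}(t)=\sum_{n\text{ odd}} b_n t^n$, where $b_n$ is the sum over multi-indices $|\alpha|=n$ of the products of the degree-$\alpha$ Hermite coefficients of $f_\eta$ and $g_\eta$. Because $f_\eta$ is a threshold in $x_2$ at the level $\pm\eta h_3(x_1)$, integrating out $x_2$ reduces each coefficient to a one-dimensional integral in $x_1$. Concretely, the coefficient of $He_a(x_1)He_b(x_2)$ equals, up to normalization, an integral of $He_a(x_1)\,He_{b-1}(\eta h_3(x_1))\,e^{-\eta^2h_3(x_1)^2/2}$ (or an $\erf$ term when $b=0$) against the Gaussian in $x_1$. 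These one-dimensional integrals are evaluated rigorously by interval quadrature with explicit error terms for all $a+b\le N$, with $N$ around $30$--$60$. The symmetry $g_\eta(x_1,x_2)=f_\eta(-x_1,x_2)$, together with the parity of $h_3$, cuts the work in half.

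\emph{Step 2: the tail of $H_{f,g}$.} For $n>N$, bound $|b_n|$ by Cauchy--Schwarz: $\sum_{|\alpha|=n}|\widehat f(\alpha)\widehat g(\alpha)|$ is at most the square root of the product of the degree-$n$ Hermite weights of $f_\eta$ and $g_\eta$. A decay estimate for these weights, exponential in $n$ in the noise-stability sense, is not free for discontinuous $f$. Instead I would bound $H$ analytically on a complex disk $|z|\le r<1$ using the integral formula \eqref{one8}, and then apply Cauchy estimates $|b_n|\le M r^{-n}$. Here $M$ is controlled by the Gaussian kernel with complex $t$, which stays integrable as long as $\Re\frac{1}{1-t^2}>0$, so that $|H(z)|$ is bounded by an explicit integral of the absolute value of the kernel.

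\emph{Step 3: inverting the series.} The inverse series $H^{-1}(z)=\sum\widehat a_{2j+1}z^{2j+1}$ is computed by Lagrange inversion up to degree $N$ in interval arithmetic. The tail of the inverse is controlled by analyticity of $H^{-1}$ on a disk $|w|\le\rho$. To get this, verify via Rouch\'e that on a circle $|z|=s$ one has $|H(z)-b_1z|<|b_1|s-\rho$ using the bounds from Step 2, so that $H$ is injective on $|z|<s$ and its image covers $|w|\le\rho$ with $|H^{-1}|\le s$ there. Cauchy estimates then give $|\widehat a_{2j+1}|\le s\rho^{-(2j+1)}$. Provided $c_0<\rho$, the sum of $|\widehat a_{2j+1}|c_0^{2j+1}$ over $2j+1>N$ is a geometric tail bounded by $s(c_0/\rho)^{N+1}/(1-c_0/\rho)$. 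Adding this to the interval-evaluated head gives $\Phi(c_0)<1$, which also verifies invertibility near $0$ and hence the Krivine scheme property.

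\emph{Main obstacle.} I expect the hard part to be Step 3, namely making $\rho$ comfortably larger than $c_0\approx0.561$. For $\eta=0$ one has $H^{-1}(w)=\sin(\pi w/2)$, which is entire, but the perturbed $H$ has its singularities at $t=\pm1$. Its inverse is therefore analytic only on a bounded region, and the constants from Rouch\'e may be too lossy. If this happens, I would work on several circles and increase $N$ until the geometric tail is below the margin $\Phi$ has from $1$. That margin is only of order $10^{-5}$ times $\Phi'(c_0)$, so all quadratures must be carried out with roughly $10^{-8}$ precision.
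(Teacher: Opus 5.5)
Step~1 and the overall architecture match the paper: the same $f_\eta,g_\eta$ built from $h_3$, Hermite coefficients reduced to one-dimensional integrals, series reversion to a finite degree, and a Rouch\'e/Cauchy bound for the inverse tail. However, Step~3 as you formulate it cannot succeed, and the reason is not lossy constants.

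\textbf{Why Step~3 fails.} Because $H_{f,g}$ has branch points at $\pm1$, your circle must have $s<1$. Already at $\eta=0$ we have $H_{f,g}=\frac{2}{\pi}\arcsin$ and $b_1=\frac{2}{\pi}$, so the maximum of $|H(z)-b_1z|$ on $|z|=s$ is $\frac{2}{\pi}(\arcsin s-s)$. Your condition then forces $\rho<\frac{2}{\pi}\max_{0<s<1}(2s-\arcsin s)=\frac{2\sqrt3}{\pi}-\frac23\approx0.436$, far below $c_0\approx0.561$. A perturbation with $\eta\approx0.04$ moves $H$ by only $O(\eta^2)$ and cannot close this gap.

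The obstruction is geometric. Any Rouch\'e certificate on a centered circle $|z|=s<1$ concludes $H^{-1}(\rho\,\mathbb D)\subset s\,\mathbb D$. But at $\eta=0$, $H^{-1}(\rho\,\mathbb D)=\sin(\frac{\pi}{2}\rho\,\mathbb D)$ contains $i\sinh(\pi\rho/2)$, which has modulus $>1$ as soon as $\rho>\frac{2}{\pi}L$. For small $\eta$ this threshold moves only perturbatively. So such certificates can only reach radii with $c_0/\rho$ extremely close to $1$, and the tail $s(c_0/\rho)^{N+1}/(1-c_0/\rho)$ would then need far more terms than can be certified. Neither several centered circles nor a larger $N$ repairs this.

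\textbf{The missing idea.} The paper straightens out the $\arcsin$ before applying Rouch\'e. With $H_\eta=\frac{\pi}{2}H_{f,g}$, the map $\mathcal H_\eta(\zeta)=H_\eta(\sin\zeta)$ is a small perturbation of the identity. The paper certifies $\sup_{|\zeta|=1.05}|\mathcal H_\eta(\zeta)-\zeta|<0.08113$. Rouch\'e then makes $\mathcal H_\eta^{-1}$ holomorphic on $|w|<r=1.05-0.08113$ with values in $|\zeta|<1.05$. Hence $B=H_\eta^{-1}=\sin\circ\mathcal H_\eta^{-1}$ is bounded by $\sinh(1.05)$ there.

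In your normalization this gives $\rho=\frac{2}{\pi}r\approx0.617$. Even then $c_0/\rho\approx0.91$, so you need $N\approx200$ rather than $30$--$60$ to push the tail below $10^{-7}$.

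The domain $\sin(1.05\,\mathbb D)$ stays inside $|\Re z|<0.95$ but reaches $\pm i\sinh(1.05)\approx\pm1.25\,i$, which is exactly the shape a centered disk cannot capture. Consequently the check needs $H_\eta$ at points of modulus up to about $1.25$, outside the disk of convergence of its Taylor series. Your Step~2, which works on $|z|\le r<1$, cannot supply these values; $H_\eta$ must be evaluated directly on the strip, e.g.\ from its integral representation. Conversely, $a_1,\dots,a_N$ depend only on $\beta_1,\dots,\beta_N$, so no tail bound for the coefficients of $H$ is needed.

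\textbf{Two smaller points.}
\begin{enumerate}
\item $\Phi(c_0)<1$ does not by itself produce a solution of \eqref{one10}, since a series with nonnegative coefficients may stay below $1$ up to its radius of convergence. The paper separately checks $A(0.9)>1$ using only $a_1,a_3$. Alternatively, padding $I(x),J(y)$ with orthogonal components shows that $\Phi\le1$ at some point beyond $c_0$ (which continuity gives) already suffices.
\item Testing at your $c_0$ instead of at $L$ with a mean-value bound is legitimate. With the paper's $\eta$, however, the margin $1-\Phi(c_0)$ is only of order $10^{-7}$, not $10^{-5}\Phi'(c_0)$, because the improvement actually achieved is barely above $10^{-5}$.
\end{enumerate}
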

We discuss the numerics used to verify Theorem \ref{thm2} further in Section \ref{seclast}.  Supporting numerical codes appear at the following URL:

\begin{urlpage}\label{oururl}
\url{https://github.com/sheilman77/grothendieck_upper_bounds}
\end{urlpage}

In fact, by perturbing the Hermite polynomial $h_3$ in the definition of $f,g$, we found numerical evidence that $K_G$ is at most
$$
\frac{\pi}{2\log(1+\sqrt{2})}-3\cdot 10^{-5}.
$$
However, to keep the length of this paper and the resulting codes more manageable, we leave the certification of this result to future work.

In light of the main result of \cite{regev14}, one might naturally ask if higher-dimensional ($k>2$) Krivine rounding schemes could be found numerically, that improve upon our observed $K_G$ bounds.  The larger $k$ is, the more difficult the numerical calculations would become, so again we leave this investigation for future work.  We simply note that our computations suggest that good Krivine rounding schemes could result from relatively simple choices of $f,g$, rather than a complicated choice of tiger partitions as put forward in \cite{braverman13}.

\subsection{Contrast with Previous Approaches}

How does our approach differ from that of \cite{braverman13}?  Our approach to Theorem \ref{mainthm} is broadly similar to the corresponding bound from \cite{braverman13}.  The main technical difference from \cite{braverman13} is in the treatment of the inverse of the $H$ function.  Instead of using an auxiliary parameter $p$ to control the inverse of $H$, we work directly with this inverse function itself. We prove a direct inverse expansion for $H$ in terms of $\eta$ in Section \ref{secinv} (after defining $q=\eta^2$), and then control the absolute inverse-coefficient sum with two different pieces, using an index cutoff $\log(1/\eta)$ in Section \ref{secnew}.  In contrast, \cite{braverman13} uses a fixed index cutoff.  In essence, the variable index cutoff supplants the auxiliary rounding parameter $p$ used in \cite{braverman13}.

However, we reiterate the conceptual difference between our result and \cite{braverman13}.  The main suggestion of \cite{braverman13} is that optimizers of K\"{o}nig's bilinear functional should lead to good bounds on $K_G$.  Such optimizers seemed so complicated that perhaps finding the exact value of $K_G$ could be a hopeless endeavor.  We instead advocate for a different path forward.  Theorem \ref{mainthm} and the resolution of Conjecture \ref{conj0} suggest instead that relatively simple choices of Krivine rounding schemes $f,g$ do lead to good bounds on $K_G$.  This realization then led to the successful rigorous computer-assisted proof of Theorem \ref{thm2} as detailed in Section \ref{seclast}.  So, perhaps $K_G$ can be found after all.

\section{Some Notation and Definitions}

Let $\Repart z$ denote the real part of $z\in\C$.  Let 
\begin{equation}\label{sldef}
        S\colonequals\{z\in\C\colon\abs{\Repart z}<1\},\qquad
        L\colonequals\log(1+\sqrt2).
\end{equation}
Thus \(L=\operatorname{arsinh}(1)\), so \(\sinh L=1\).  We use the Hermite
normalization of \cite{braverman13}: the polynomials \(h_m\) are orthonormal for the weight
\(e^{-x^2}\dd x\), i.e.
\begin{equation}\label{eq:hermite-normalization}
        \int_\R h_m(x)h_n(x)e^{-x^2}\dd x=\delta_{mn}.
\end{equation}
In particular,
\begin{equation}\label{eq:h5}
        h_5(x)=\frac{4x^5-20x^3+15x}{2\pi^{1/4}\sqrt{15}},
        \qquad h_3(x)=\frac{2x^{3}-3x}{\pi^{1/4}\sqrt{3}},
        \qquad\forall\,x\in\R.
\end{equation}
%H5 = 32 x^5 -..., divide sqrt(5! 2^5)
%                 =2^3 sqrt(5*4*3)
%H3 = 8x^3 - 12x, divide sqrt(2^3 3!)
%                 =4sqrt(3)
% h_3(x)=\frac{2x^{3}-3x}{\pi^{1/4}\sqrt{3}}
For \(\eta\in(0,1)\) and $d\in\{3,5\}$, define
\begin{equation}\label{eq:feta}
        f_\eta(x_1,x_2)=f_\eta^{(d)}(x_1,x_2)
        \begin{cases}
        1, & x_2\geq\eta h_d(x_1),\\
        -1, & x_2<\eta h_d(x_1).
        \end{cases}.
\end{equation}
Since \(h_d\) is odd for $d\in\{3,5\}$,
\(f_\eta\) is odd almost everywhere.  As in \cite[Equation (41)]{braverman13}, we use a different normalization for the function \eqref{one8} denoted $\forall$ $z\in S$ as
\begin{equation}\label{eq:paper-H}
        H_\eta(z)\colonequals\frac{1}{2\pi(1-z^2)}
        \int_{\R^2\times\R^2} f_\eta(x)f_\eta(y)
        \exp\!\left(\frac{-\norm{x}_2^2-\norm{y}_2^2+2z\ip{x}{y}}{1-z^2}\right)
        \dd x\dd y.
\end{equation}
When $d=3$, we use $f_{\eta}^{(3)}(x)(f_{-\eta}^{(3)}(y))$ in the integrand \eqref{eq:paper-H}.  By \cite[Lemma 4.3]{braverman13},
\begin{equation}\label{eq:H0-arcsin}
        H_0(z)=\arcsin z\qquad\forall\,z\in S.
\end{equation}

The function \eqref{one8} when applied to the pair
\((f_\eta,f_\eta)\) in dimension \(k=2\) with $d=5$ differs from \eqref{eq:paper-H} by a scalar:
\begin{equation}\label{eq:normalization-factor}
        H_{f_\eta,f_\eta}(z)=\frac{2}{\pi}H_\eta(z).
\end{equation}
Indeed, \eqref{one8} has prefactor \(\pi^{-2}(1-z^2)^{-1}\), whereas
\eqref{eq:paper-H} has prefactor \((2\pi)^{-1}(1-z^2)^{-1}\).  Let
\begin{equation}\label{eq:q-def}
        q\colonequals\eta^2,
        \qquad \widetilde H_q(z)\colonequals H_{\sqrt q}(z).
\end{equation}
When the local inverse of \(\widetilde H_q\) at the origin is defined, denote it by
\(B_q=\widetilde H_q^{-1}\), and write
\begin{equation}\label{eq:Bq-coefficients}
        B_q(z)=\sum_{m=0}^\infty a_{2m+1}(q)z^{2m+1},
\end{equation}
for all $z$ near $0$.  (For small $q$, this inverse will be shown to exist and be analytic in a neighborhood of the origin in Lemma \ref{lem:inverse-expansion} below.)  The even coefficients vanish because \(\widetilde H_q\) is odd, and the coefficients are real
since \(\widetilde H_q\) has real Taylor coefficients at the origin.

% \begin{theorem}\label{thm:main}
% Let $d=5$.  For all sufficiently small \(\eta>0\), the pair \((f_\eta,f_\eta)\) is a Krivine rounding
% scheme and
% \[
%         c(f_\eta,f_\eta)>\frac{2}{\pi}\log(1+\sqrt2).
% \]
% Consequently Conjecture 5.5 of \cite{braverman13} holds.
% \end{theorem}

% Similarly, when $d=3$, $c(f_\eta, f_{-\eta})>\frac{2}{\pi}\log(1+\sqrt2)$ for sufficiently small $\eta>0$.

\section{The first perturbation term of \texorpdfstring{$H$}{H}}

For any \(z\in S\) and $u,v\in\R$, define the two-dimensional Gaussian kernel
\begin{equation}\label{eq:pz}
        p_z(u,v)=\frac{1}{\pi\sqrt{1-z^2}}
        \exp\!\left(-\frac{u^2+v^2-2zuv}{1-z^2}\right).
\end{equation}
For real \(z=t\in(-1,1)\), this is the joint density of a centered Gaussian pair
\((U,V)\) with \(\E U^2=\E V^2=1/2\) and \(\E UV=t/2\).  To justify the estimates below, note that
\[
        \frac{u^2+v^2-2zuv}{1-z^2}
        =\frac{(u+v)^2}{2(1+z)}+\frac{(u-v)^2}{2(1-z)}.
\]
Since \(\Repart(1/(1+z))>0\) and \(\Repart(1/(1-z))>0\) for \(z\in S\), for every
compact set \(K\subset S\) there are constants \(c_K,C_K>0\) such that
\begin{equation}\label{eq:kernel-decay}
        \abs{\partial_u^\alpha\partial_v^\beta p_z(u,v)}
        \le C_{K,\alpha,\beta}(1+\abs{u}+\abs{v})^{\alpha+\beta}e^{-c_K(u^2+v^2)}
\end{equation}
for all \(z\in K\), $u,v\in\R$, $\alpha,\beta\geq0$.  In particular, the Gaussian integrals and the
threshold differentiations used below are locally uniform in \(z\in S\).

The following identities are well known, but we include their proofs for completeness.

\begin{lemma}[Hermite covariance]\label{lem:hermite-covariance}
For every \(m,n\ge0\) and every \(z\in S\),
\begin{equation}\label{eq:hermite-covariance}
        \iint_{\R^2} h_m(u)h_n(v)p_z(u,v)\dd u\dd v
        =\frac{z^n}{\sqrt\pi}\delta_{mn}.
\end{equation}
In particular, for $d\in\{3,5\}$
\begin{equation}\label{eq:h5-covariance}
        \iint_{\R^{2}} h_d(u)h_d(v)p_z(u,v)\dd u\dd v=\frac{z^d}{\sqrt\pi}.
%        \qquad
                % \iint h_3(u)h_3(v)p_z(u,v)\dd u\dd v=\frac{z^3}{\sqrt\pi}.
\end{equation}
Also,
\begin{equation}\label{eq:h5-moment}
        \iint_{\R^{2}} h_d(u)^2p_z(u,v)\dd u\dd v=\frac1{\sqrt\pi}.
\end{equation}
\end{lemma}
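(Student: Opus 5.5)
The plan is to prove \eqref{eq:hermite-covariance} first for real $z=t\in(-1,1)$, and then extend it to all of $S$ by analytic continuation.

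\textbf{Real case.} For $t\in(-1,1)$, $p_t$ is the density of a centered Gaussian pair $(U,V)$ with $\E U^2=\E V^2=1/2$ and $\E UV=t/2$. Realize it as $U=X$ and $V=tX+\sqrt{1-t^2}\,Y$, where $X,Y$ are independent with density $\pi^{-1/2}e^{-x^2}$. The map $g\mapsto \E[g(tX+\sqrt{1-t^2}Y)\mid X]$ is the Ornstein--Uhlenbeck (Mehler) semigroup for the weight $e^{-x^2}$. Its eigenfunctions are the Hermite polynomials, and
\[
\E\bigl[h_n(tX+\sqrt{1-t^2}Y)\mid X\bigr]=t^n h_n(X).
\]
I will verify this eigenrelation through the generating function $\sum_n H_n(x)s^n/n!=e^{2xs-s^2}$. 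Since $h_n$ is a constant multiple of the physicists' $H_n$ and the normalization constant is the same on both sides, it suffices to check the relation for $H_n$. Computing
\[
\E\bigl[e^{2s(tX+\sqrt{1-t^2}Y)-s^2}\mid X\bigr]=e^{2stX-s^2+s^2(1-t^2)}=e^{2(st)X-(st)^2},
\]
and comparing coefficients of $s^n$ gives $\E[H_n(tX+\sqrt{1-t^2}Y)\mid X]=t^nH_n(X)$. Then
\[
\iint h_m(u)h_n(v)p_t(u,v)\dd u\dd v=t^n\E[h_m(X)h_n(X)]=t^n\pi^{-1/2}\delta_{mn},
\]
by the orthonormality \eqref{eq:hermite-normalization}. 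This yields \eqref{eq:hermite-covariance} for real $t$.

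\textbf{Extension to $S$.} Both sides are analytic in $z\in S$. For the left side, the integrand $p_z(u,v)$ is holomorphic in $z$, and the polynomially weighted Gaussian bound \eqref{eq:kernel-decay} holds locally uniformly on compacts of $S$, so differentiation under the integral (or Morera plus Fubini) applies. The right side is a polynomial. Since they agree on $(-1,1)$ and $S$ is connected, the identity theorem gives equality on $S$.

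\textbf{Consequences.} Equation \eqref{eq:h5-covariance} is the case $m=n=d$. For \eqref{eq:h5-moment}, integrate out $v$. The $v$-marginal of $p_z$ gives
\[
\int p_z(u,v)\dd v=\pi^{-1/2}e^{-u^2}
\]
for every $z\in S$. For real $z$ this is the marginal density; for complex $z$ it follows by analytic continuation, or directly from completing the square. Then \eqref{eq:hermite-normalization} gives $1/\sqrt\pi$. Equivalently, this is \eqref{eq:hermite-covariance} with $n=0$ and $h_0=\pi^{-1/4}$ applied to $h_d^2$: expand $h_d^2$ in Hermite polynomials and keep only its $h_0$ component, which has coefficient $\pi^{-1/4}$. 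The direct marginal computation is simpler.

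\textbf{Main obstacle.} The only delicate point is justifying analyticity and the interchange of limits for complex $z$, and \eqref{eq:kernel-decay} supplies exactly the uniform domination this needs.
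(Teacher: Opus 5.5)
Your proof is correct and has the same structure as the paper's: prove \eqref{eq:hermite-covariance} for real $t$ via the Hermite generating function, extend it to $S$ by the identity theorem using \eqref{eq:kernel-decay}, and get \eqref{eq:h5-moment} from the marginal $\int p_z(u,v)\,dv=\pi^{-1/2}e^{-u^2}$. The only difference is in the real case: you condition on $X$ to obtain the Mehler eigenrelation $\E[h_n(V)\mid X]=t^nh_n(X)$ and then invoke \eqref{eq:hermite-normalization}, whereas the paper computes the bivariate generating-function expectation $\pi^{-1/2}e^{tsr}$ and reads off the coefficient of $s^mr^n$, which produces $\delta_{mn}$ without a separate appeal to orthonormality.
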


\begin{proof}
We first prove \eqref{eq:hermite-covariance}.  It suffices to prove the identity for real \(z=t\in(-1,1)\); the identity for all
\(z\in S\) follows by analyticity and the estimate \eqref{eq:kernel-decay}.  We use the
Hermite generating function in the normalization of \cite{braverman13}, namely
\begin{equation}\label{eq:generating-function}
        \sum_{n=0}^\infty h_n(x)\frac{s^n}{\sqrt{n!}}
        =\pi^{-1/4}\exp\!\left(\sqrt2sx-\frac{s^2}{2}\right).
\end{equation}
If \((U,V)\) has density \(p_t\), then \(\E U^2=\E V^2=1/2\) and \(\E UV=t/2\).  Hence 
\[
\begin{aligned}
&\E\left[\left(\sum_{m=0}^\infty h_m(U)\frac{s^m}{\sqrt{m!}}\right)
          \left(\sum_{n=0}^\infty h_n(V)\frac{r^n}{\sqrt{n!}}\right)\right]  \\
&\qquad =\pi^{-1/2}\exp\!\left(-\frac{s^2+r^2}{2}\right)
          \E e^{\sqrt2sU+\sqrt2rV}
       =\pi^{-1/2}e^{tsr}.
\end{aligned}
\]
The last equality follows from the known formula for the moment generating function of $(U,V)$.  Comparing the coefficient of \(s^mr^n\) gives \eqref{eq:hermite-covariance} for real
\(t\), and then analytic continuation gives the general case. 

Having proved \eqref{eq:hermite-covariance} (which then implies \eqref{eq:h5-covariance}), we now prove \eqref{eq:h5-moment}.  For real \(z=t\in(-1,1)\), completing the square gives
\[
\int_{\mathbb R}p_t(u,v)\,dv
\stackrel{\eqref{eq:pz}}{=}
\frac{e^{-u^2}}{\sqrt\pi}.
\]
Hence
\[
\iint_{\R^{2}} h_d(u)^2p_t(u,v)\,du\,dv
=
\frac1{\sqrt\pi}\int_{\mathbb R}h_d(u)^2e^{-u^2}\,du
\stackrel{\eqref{eq:hermite-normalization}}{=}
\frac1{\sqrt\pi}.
\]
The identity extends to all \(z\in S\) by analytic continuation.
\end{proof}

The following Lemma could be considered a routine extension of \cite[Theorem 4.4]{braverman13}, which used an expansion only for $z=i$.

\begin{lemma}[Expansion of \(\widetilde H_q\)]\label{lem:Hq-expansion}
Uniformly for $z$ on compact subsets of \(S\), as \(q\downarrow0\),
\begin{equation}\label{eq:Hq-expansion}
        \widetilde H_q(z)=\arcsin z+qQ(z)+O(q^2),
\end{equation}
where $\sigma\colonequals1$ for $d=5$, $\sigma\colonequals-1$ for $d=3$, and for $d\in\{3,5\}$, we have
\begin{equation}\label{eq:Q-def}
        Q(z)=Q_d(z)=\frac{2(\sigma z^d-z)}{\sqrt\pi\sqrt{1-z^2}}.
\end{equation}
The expansion \eqref{eq:Hq-expansion} may be differentiated once with respect to \(z\), locally uniformly on \(S\).
\end{lemma}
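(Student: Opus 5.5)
The plan is to reduce $\widetilde H_q$ to a one-dimensional conditional computation, exploiting the product structure of the kernel. For $x=(x_1,x_2)$ and $y=(y_1,y_2)$, the integrand in \eqref{eq:paper-H} factors as $p_z(x_1,y_1)\,p_z(x_2,y_2)$ up to the constant $\pi/2$. Indeed, $\pi^2(1-z^2)\cdot\frac{1}{2\pi(1-z^2)}=\frac{\pi}{2}$. So I will write
\[
\widetilde H_q(z)=\frac{\pi}{2}\iint_{\R^2} \Phi_z\bigl(\eta h_d(u),\,\sigma'\eta h_d(v)\bigr)\,p_z(u,v)\dd u\dd v .
\]
Here $\sigma'=1$ for $d=5$ and $\sigma'=-1$ for $d=3$, which encodes the pair $(f_\eta,f_{-\eta})$. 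The function
\[
\Phi_z(a,b)\colonequals\iint_{\R^2}\sgn(s-a)\,\sgn(t-b)\,p_z(s,t)\dd s\dd t
\]
is the one-dimensional sign correlation with shifted thresholds. At $a=b=0$ it equals $\frac{2}{\pi}\arcsin z$, which recovers \eqref{eq:H0-arcsin}.

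Next I will Taylor expand $\Phi_z$ in $(a,b)$ around $(0,0)$. Since $p_z(s,t)=p_z(-s,-t)$, we have $\Phi_z(a,b)=\Phi_z(-a,-b)$. Hence all odd-order terms vanish and the expansion reads $\Phi_z(0,0)+\tfrac12(\Phi_{aa}a^2+2\Phi_{ab}ab+\Phi_{bb}b^2)+R$ with $|R|\lesssim (a^2+b^2)^2$. Differentiating in the thresholds via $\partial_a\sgn(s-a)=-2\delta_a(s)$ gives
\[
\Phi_{ab}(0,0)=4p_z(0,0)=\frac{4}{\pi\sqrt{1-z^2}},\qquad
\Phi_{aa}(0,0)=-2\int_\R \sgn(t)\,\partial_s p_z(0,t)\dd t .
\]
Using $\partial_sp_z(0,t)=\frac{2zt}{1-z^2}p_z(0,t)$ and a one-dimensional Gaussian integral, $\Phi_{aa}(0,0)$ should come out as $-\frac{4z}{\pi\sqrt{1-z^2}}$, and likewise for $\Phi_{bb}$. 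Substituting $a=\eta h_d(u)$, $b=\sigma'\eta h_d(v)$ and integrating against $p_z$, Lemma \ref{lem:hermite-covariance} supplies the needed moments: \eqref{eq:h5-moment} gives $\iint h_d(u)^2p_z=\iint h_d(v)^2p_z=1/\sqrt\pi$, and \eqref{eq:h5-covariance} gives $\iint h_d(u)h_d(v)p_z=z^d/\sqrt\pi$. Collecting terms, the $q$-coefficient becomes $\frac{\pi}{2}\cdot\frac{4}{\pi\sqrt\pi\sqrt{1-z^2}}(\sigma z^d-z)$, which is exactly $Q$ in \eqref{eq:Q-def}. The sign $\sigma=\sigma'$ comes from the cross term. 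I will double-check the constants in $\Phi_{aa}$ at this stage.

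The main obstacle is making the remainder rigorous and uniform for complex $z$ in a compact $K\subset S$. There $p_z$ is not a density, and $\Phi_z$ must be treated as an analytic function of $z$ with smooth dependence on $(a,b)$. My approach is to note that every $(a,b)$-derivative of $\Phi_z$ of order at least one is a one-dimensional integral of $p_z$, or of its $s,t$-derivatives, along threshold lines. By \eqref{eq:kernel-decay}, the fourth derivatives are bounded uniformly in $z\in K$ and in $(a,b)\in\R^2$. Then Taylor's theorem with integral remainder gives $|R|\le C_K\eta^4(h_d(u)^4+h_d(v)^4)$. This is integrable against $|p_z(u,v)|\le C_Ke^{-c_K(u^2+v^2)}$, producing the uniform $O(q^2)$ term.

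Finally, both sides of \eqref{eq:Hq-expansion} are analytic in $z$ on $S$, and the bound is uniform on slightly larger compact sets. Cauchy's estimate then shows that the $z$-derivative of the remainder is also $O(q^2)$ locally uniformly, which justifies differentiating the expansion once.
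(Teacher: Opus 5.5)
Your proposal is correct and follows essentially the same route as the paper: write $\widetilde H_q(z)=\frac{\pi}{2}\,\E\bigl[G_z(\sqrt q\,h_d(X_1),\sigma\sqrt q\,h_d(Y_1))\bigr]$, Taylor expand the threshold function $G_z=\Phi_z$ to fourth order with fourth derivatives bounded uniformly in $(a,b)\in\R^2$ and $z\in K$, compute the second derivatives at the origin, read off the moments from Lemma~\ref{lem:hermite-covariance}, and obtain the $z$-derivative by Cauchy's estimate. The only minor difference is in how the odd-order terms are removed: you use the evenness $\Phi_z(a,b)=\Phi_z(-a,-b)$, which makes the first and third derivatives vanish at the origin, while the paper shows the cubic expectations vanish by the odd-moment symmetry of $(A,B)$ under $p_z$; both arguments are valid.
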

\begin{remark}\label{ubrk}
We make the constants from Lemma \ref{lem:Hq-expansion} explicit in Sections \ref{app1} and \ref{app2}.  When $d=5$, we have by \eqref{eq:H-remainder-product-final}, \eqref{eq:product-R108-final}, \eqref{eq:product-R108-final2} and \eqref{hqpbound} that
$$
\sup_{z\in \sin(1.08\cdot \D)}|\widetilde{H}_q(z) - (\arcsin z + q Q(z))|\leq q^{2}e^{23.6}.
$$
% using 1.09
$$
\sup_{z\in \sin(1.08\cdot \D)}\Big|\frac{d}{dz}\Big[\widetilde{H}_q(z) - (\arcsin z + q Q(z))\Big]\Big|\leq \frac{100}{\cos(1.08)}q^{2}e^{24.58}.
$$
When $d=3$, we have by \eqref{eq:H-remainder-product-final}, \eqref{eq:product-R108-final3}, \eqref{eq:product-R108-final4} and \eqref{hqpbound}
$$
\sup_{z\in \sin(1.08\cdot \D)}|\widetilde{H}_q(z) - (\arcsin z + q Q(z))|\leq q^{2}e^{14.5}.
$$
% using 1.09
$$
\sup_{z\in \sin(1.08\cdot \D)}\Big|\frac{d}{dz}\Big[\widetilde{H}_q(z) - (\arcsin z + q Q(z))\Big]\Big|\leq \frac{100}{\cos(1.08)}q^{2}e^{15}.
$$
\end{remark}

\begin{proof}
For any \(z\in S\), define
\begin{equation}\label{gzdef}
        G_z(a,b)\colonequals\iint_{\R^2}\sgn(u-a)\sgn(v-b)p_z(u,v)\dd u\dd v,
        \qquad \forall\,a,b\in\R.
\end{equation}
By \eqref{eq:kernel-decay}, \(G_z\) is \(C^4\) in the threshold variables \((a,b)\), with
bounds that are uniform for \(z\) in a fixed compact subset of \(S\).  Direct differentiation then gives %\snote{need to double check these derivative calculations}
\begin{equation}\label{eq:first-derivatives-G}
        \partial_aG_z(a,b)=-2\int_\R \sgn(v-b)p_z(a,v)\dd v,
        \qquad
        \partial_bG_z(a,b)=-2\int_\R \sgn(u-a)p_z(u,b)\dd u.
\end{equation}
At \((a,b)=(0,0)\) these derivatives vanish, because the integrands are odd.  A second
threshold differentiation gives
\begin{equation}\label{eq:G-second-derivatives}
        \partial_{aa}G_z(0,0)=\partial_{bb}G_z(0,0)
        =-\frac{4z}{\pi\sqrt{1-z^2}},
        \qquad
        \partial_{ab}G_z(0,0)=\frac{4}{\pi\sqrt{1-z^2}}.
\end{equation}
For instance,
\[
\begin{aligned}
        \partial_{aa}G_z(0,0)
        &=-2\int_\R \sgn(v)\partial_up_z(0,v)\dd v                                      \\
        &=-\frac{4z}{1-z^2}\int_\R |v|\frac{\exp(-v^2/(1-z^2))}{\pi\sqrt{1-z^2}}\dd v
        =-\frac{4z}{\pi\sqrt{1-z^2}},
\end{aligned}
\]
where \(\Repart(1/(1-z^2))>0\) on \(S\).  Also
\(\partial_{ab}G_z(a,b)=4p_z(a,b)\), which yields the displayed formula for
\(\partial_{ab}G_z(0,0)\).

Let \((X_1,Y_1)\) be distributed according to the kernel \(p_z\).  From
\eqref{eq:paper-H} and \eqref{eq:pz}, or first for real \(z\) and then by analyticity, we have
\begin{equation}\label{eq:Hq-conditional}
        \widetilde H_q(z)
        \stackrel{\eqref{eq:q-def}}{=}H_{\sqrt{q}}(z)
        \stackrel{\eqref{eq:paper-H}}{=}\frac\pi2\,
        \E\left[G_z\big(\sqrt q\,h_d(X_1),\sqrt q\,\sigma h_d(Y_1)\big)\right],
\end{equation}
where $\sigma=1$ for $d=5$ and $\sigma=-1$ for $d=3$.  (Since \eqref{eq:paper-H} has an absolutely convergent integrand, $H_{\sqrt{q}}(z)$ is analytic in the strip $S$.)  Write \(A=h_d(X_1)\) and \(B=\sigma h_d(Y_1)\).  Taylor expansion of \(G_z\) at \((0,0)\) to
third order, with integral remainder, gives
\[
\begin{aligned}
G_z(\sqrt q A,\sqrt q B)
&=G_z(0,0) \\
&\quad +\frac q2\Big(\partial_{aa}G_z(0,0)A^2
      +2\partial_{ab}G_z(0,0)AB
      +\partial_{bb}G_z(0,0)B^2\Big) \\
&\quad +\frac{q^{3/2}}{6}\Big(
      \partial_{aaa}G_z(0,0)A^3
      +3\partial_{aab}G_z(0,0)A^2B \\
&\qquad\qquad\qquad
      +3\partial_{abb}G_z(0,0)AB^2
      +\partial_{bbb}G_z(0,0)B^3\Big) \\
&\quad +O_K\big(q^2(\abs A+\abs B)^4\big),
\end{aligned}
\]
uniformly for $z$ in a compact subset of $S$.  The cubic expectation is zero:
the kernel \(p_z(x,y)\dd x\dd y\) is invariant under \((x,y)\mapsto(-x,-y)\), and each
of \(A^3,A^2B,AB^2,B^3\) is odd under this simultaneous sign change.  Since \(h_d\) is a
polynomial, the fourth moment in the remainder is finite and locally uniformly bounded in
\(z\).  Hence by \eqref{eq:Hq-conditional}
\begin{equation}\label{eq:Hq-second-order}
\begin{aligned}
\widetilde H_q(z)
&=\widetilde H_0(z)
  +\frac\pi2q\left(
      \partial_{aa}G_z(0,0)\E A^2
      +\partial_{ab}G_z(0,0)\E AB\right)+O_K(q^2).
\end{aligned}
\end{equation}
Here we used \(\partial_{aa}G_z=\partial_{bb}G_z\) by \eqref{eq:G-second-derivatives} and \(\E A^2=\E B^2\).
By Lemma~\ref{lem:hermite-covariance}, \(\E A^2=1/\sqrt\pi\) and
\(\E AB=\sigma z^d/\sqrt\pi\).  Substituting \eqref{eq:G-second-derivatives} into
\eqref{eq:Hq-second-order} yields
\[
        \widetilde H_q(z)=\widetilde H_0(z)+q\frac{2(\sigma z^d-z)}{\sqrt\pi\sqrt{1-z^2}}+O_K(q^2).
\]
Together with \eqref{eq:H0-arcsin}, this is \eqref{eq:Hq-expansion}.  The locally uniform
expansion of the derivative follows from Cauchy's integral formula, applied on a slightly
larger compact subset of \(S\).
\end{proof}

\section{The inverse function and its coefficients}\label{secinv}

In this Section we modify and extend \cite[Lemmas 5.3 and 5.4]{braverman13}.

\begin{lemma}[Inverse perturbation]\label{lem:inverse-expansion}
Fix \(0<r\leq1.08\).  For all sufficiently small \(q>0\), the inverse branch
\(B_q=\widetilde H_q^{-1}\) is analytic on \(r\D\).  Uniformly for \(z\in r\D\),
\begin{equation}\label{eq:Bq-expansion}
        B_q(z)=\sin z+q\Psi(z)+O_r(q^2),
\end{equation}
where for $d=5$, we have
\begin{equation}\label{eq:Psi-def}
        \Psi(z)=-Q(\sin z)\cos z
        \stackrel{\eqref{eq:Q-def}}{=}\frac{2}{\sqrt\pi}\big(\sin z-\sin^5z\big)
        =\frac{6\sin z+5\sin(3z)-\sin(5z)}{8\sqrt\pi}.
\end{equation}
Consequently, if
\[
        \sin z=\sum_{\substack{n\ge1\\ n\text{ odd}}}b_nz^n,
        \qquad
        \Psi(z)=\sum_{\substack{n\ge1\\ n\text{ odd}}}c_nz^n,
\]
then, for \(n=2m+1\),
\begin{equation}\label{eq:bn-cn}
        b_n=\frac{(-1)^m}{n!},
        \qquad
        c_n=\frac{(-1)^m}{8\sqrt\pi\,n!}\big(6+5\cdot3^n-5^n\big).
\end{equation}
In particular, there is an absolute constant \(C_0\) such that
\begin{equation}\label{eq:cn-bound}
        \abs{c_n}\le C_0\frac{5^n}{n!}
        \qquad\forall\text{ odd }n\geq1.
\end{equation}
Moreover, for every $0<\rho\leq1.08$ there are constants \(q_\rho>0\) and \(C_\rho<\infty\) such
that, for all \(0<q<q_\rho\) and all odd \(n\ge1\), the Taylor coefficients $a_n(q)$ of $B_q$ \eqref{eq:Bq-coefficients} satisfy
\begin{equation}\label{eq:coefficient-error}
        \abs{a_n(q)-b_n-qc_n}\le C_\rho q^2\rho^{-n}.
\end{equation}
\end{lemma}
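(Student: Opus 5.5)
The plan is to treat $B_q$ as a perturbation of $\sin$, using $\arcsin$ as the unperturbed map, and then pass from functions to coefficients with Cauchy estimates.

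\textbf{Setup.} Fix $r\le 1.08$ and pick a slightly larger radius $r'\in(r,\pi/2)$; for instance $r'=1.09$ covers every admissible $r$, and Remark~\ref{ubrk} indicates this is the intended choice. The image $K\colonequals\sin(\overline{r'\D})$ is a compact subset of $S$. Indeed, $\Repart\sin(x+iy)=\sin x\cosh y$, and for $|x+iy|\le r'<\pi/2$ one checks $|\sin x\cosh y|<1$. By Lemma~\ref{lem:Hq-expansion}, on a compact neighborhood $K'\subset S$ of $K$ we have
\[
\widetilde H_q=\arcsin+qQ+E_q,\qquad \sup_{K'}|E_q|+\sup_{K'}|E_q'|=O(q^2).
\]

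\textbf{Existence of the inverse (Rouché).} For $w\in \overline{r\D}$, solve $\widetilde H_q(\zeta)=w$ for $\zeta$ near $\sin w$. Consider the functions $\arcsin(\zeta)-w$ and $\widetilde H_q(\zeta)-w$ on the domain $\sin(r'\D)$. The first has exactly one zero there, at $\zeta=\sin w$, and $|\arcsin\zeta-w|\ge r'-r>0$ on the boundary $\sin(r'\partial\D)$. The difference between the two functions is $qQ+E_q=O(q)$. So for small $q$, Rouché's theorem gives a unique zero $B_q(w)$, and $B_q$ is analytic (by the argument principle formula or the implicit function theorem) with $\widetilde H_q\circ B_q=\mathrm{id}$. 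It agrees with the local inverse at $0$, so its Taylor coefficients are the $a_n(q)$. Oddness and realness follow from those of $\widetilde H_q$.

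\textbf{First-order expansion.} Put $\delta=B_q(w)-\sin w$. By Rouché applied on shrinking discs, $\delta=O(q)$. Expanding,
\[
w=\arcsin(\sin w+\delta)+qQ(\sin w+\delta)+E_q
= w+\frac{\delta}{\cos w}+O(\delta^2)+qQ(\sin w)+O(q\delta)+O(q^2).
\]
Here I use $\arcsin'(\sin w)=1/\cos w$, which is valid because $\cos w\ne 0$ on $\overline{r'\D}$, and the $O(\cdot)$ constants are uniform by compactness. This gives
\[
\delta=-q\,Q(\sin w)\cos w+O(q^2),
\]
which is \eqref{eq:Bq-expansion} with $\Psi=-Q(\sin z)\cos z$.

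For $d=5$ we have $\sqrt{1-\sin^2 z}=\cos z$; this branch choice is consistent because $\Repart\cos z>0$ near the real segment and the identity extends analytically. Hence
\[
\Psi=\frac{2}{\sqrt\pi}\,(\sin z-\sin^5 z).
\]
The power-reduction formula
\[
\sin^5 z=\frac{10\sin z-5\sin 3z+\sin 5z}{16}
\]
then yields the trigonometric form in \eqref{eq:Psi-def}. Reading off the coefficients, using $\sin(kz)=\sum_m(-1)^m k^n z^n/n!$ with $n=2m+1$, gives \eqref{eq:bn-cn}. The bound \eqref{eq:cn-bound} follows from $|6+5\cdot 3^n-5^n|\le 12\cdot 5^n$.

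\textbf{Coefficient bound.} For \eqref{eq:coefficient-error}, apply Cauchy's estimate on the circle $|z|=\rho$ to the analytic function $B_q-\sin-q\Psi$, whose sup on $\rho\D$ is at most $C_\rho q^2$. This gives $|a_n(q)-b_n-qc_n|\le C_\rho q^2\rho^{-n}$.

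\textbf{Main obstacle.} The delicate step is making the Rouché/inversion step uniform up to $r=1.08$. This needs a quantitative lower bound on $|\arcsin\zeta-w|$ along $\sin(r'\partial\D)$, and control of $\arcsin''$ together with $1/\cos$ on $\sin(\overline{r'\D})$, so that the $O(\delta^2)$ term is absorbed uniformly. Choosing $r'=1.09$ and the explicit derivative bounds of Remark~\ref{ubrk} should handle this.
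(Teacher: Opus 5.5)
Your proposal is correct and follows essentially the paper's route (Rouch\'e to construct $B_q$, linearizing $\arcsin$ at $\sin z$ to get $\Psi=-Q(\sin z)\cos z$, power reduction for $\sin^5 z$, and Cauchy estimates on $|z|=\rho$). The one variation is that you apply Rouch\'e once on the whole Jordan domain $\sin(r'\D)$, where $|\arcsin\zeta-w|\ge r'-r$ on the boundary, instead of on small discs around $\sin z$ via the lower-Lipschitz Lemma~\ref{lemma5}; this gives global uniqueness and avoids patching local inverses.

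One correction is needed. Your claim that $\sin(\overline{r'\D})\Subset S$ for every $r'<\pi/2$ is false: for $r'=1.5$, $x=\sqrt2$, $y=0.5$ gives $\sin x\cosh y\approx1.11$, and the containment already fails at $r'=1.16$. So this containment, not the Rouch\'e margin, is the genuinely $r$-dependent constraint. You must commit to a specific value such as $r'=1.09$, where $|\Repart\sin\zeta|\le\frac{\sqrt3}{2}e^{(r'^2-1)/2}<1$ as in the paper.
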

\begin{remark}\label{d3invrk}
In the $d=3$ case, we have instead
% sin^3 = (1/4)(3sin - sin3)
\begin{equation}
\Psi(z)=\frac{2}{\sqrt{\pi}}(\sin(z)+\sin^3(z))
=\frac{7\sin z - \sin(3z)}{2\sqrt{\pi}},\qquad
b_{n}=\frac{(-1)^{m}}{n!},\qquad
c_{n}=\frac{(-1)^{m}}{2\sqrt{\pi}n!}(7 - 3^n),
\end{equation}
\begin{equation}
|c_n|\leq C_0 \frac{3^n}{n!},\qquad
|a_n(q)-b_n -qc_n|\leq C_\rho q^2 \rho^{-n}
\end{equation}
\end{remark}
%\snote{editing to enlarge $r$}
\begin{proof}
Let \(K_r=\sin(\overline{r\D})\).  Since \(r<\pi/2\),
\(\arcsin(\sin z)=z\) for \(\abs z\le r\), and \(\cos z\) is bounded away from zero on
\(\overline{r\D}\).  

We need to check that \(K_R=\{\sin\zeta\colon\ |\zeta|\le R\}\) is contained
compactly in the strip \(S=\{w\in\C\colon\ |\Re w|<1\}\).  We shall use
\(R=27/25\).  Write \(\zeta=x+iy\).  Then
\[
|\Re(\sin\zeta)|
=|\sin x\cosh y|
=|\sin x|\cosh y.
\]
% sin(x+iy)= sin x cos(iy)+cos x sin(iy)
% = sin x cosh y + cos x i sinh y
% Re sinz = sinx cosh y
% Im sinz = cosx sinh y
% %
% \[
% |\Re(\sin\zeta)|=|\sin x|\cosh y.
% \]
We use the elementary inequalities
\[
\cosh y\le e^{y^2/2},\quad\forall\,y\in\R
,\qquad
\sin x\le x e^{-x^2/6},\quad\forall\,0\leq x<\pi\
\]
% The first follows from \(\log\cosh y\le y^2/2\).  For the second, set
% \[
% F(x)=\log\frac{\sin x}{x}+\frac{x^2}{6}.
% \]
% Using
% \[
% \cot x=\frac1x-2x\sum_{n=1}^{\infty}\frac1{n^2\pi^2-x^2},
% \qquad 0<x<\pi,
% \]
% we get
% \[
% F'(x)=\cot x-\frac1x+\frac{x}{3}
% \le
% -2x\sum_{n=1}^{\infty}\frac1{n^2\pi^2}+\frac{x}{3}
% =
% -2x\cdot\frac16+\frac{x}{3}=0.
% \]
% Since \(F(0)=0\), this proves \(\sin x\le xe^{-x^2/6}\).

Now if \(|\zeta|\le R\), then \(x^2+y^2\le R^2\), so
\[
|\Re(\sin\zeta)|
\le
|x| e^{-x^2/6}e^{y^2/2}
\le
|x|\exp\left(\frac{R^2}{2}-\frac{2x^2}{3}\right).
\]
The right-hand side is maximized at \(x=\sqrt3/2\), so plugging this in and using $R=27/25$,
\[
|\Re(\sin\zeta)|
\le
\frac{\sqrt3}{2}\exp\left(\frac{R^2-1}{2}\right)
\leq \frac{\sqrt{3}}{2}e^{\frac{104}{2\cdot 625}}
<\frac{19}{20}.
\]
% e^(x^2 -1)/2 = 2/sqrt(3)
% x^2 -1 = 2 log(2/sqrt(3))
%x^2 = 1+ 2 log(2/sqrt(3))
% s = sqrt(1+2*log(2/sqrt(3)))
%
% With \(R=27/25\), this gives
% \[
% |\Re(\sin\zeta)|^2
% \le
% \frac34 e^{104/625}.
% \]
% Since \(e^t<1/(1-t)\) for \(0<t<1\),
% \[
% e^{104/625}<\frac{625}{521}.
% \]
% Therefore
% \[
% |\Re(\sin\zeta)|^2
% <
% \frac34\cdot\frac{625}{521}
% =
% \frac{1875}{2084}
% <
% \left(\frac{19}{20}\right)^2.
% \]
% Thus
% \[
% |\Re(\sin\zeta)|<\frac{19}{20}
% \qquad(|\zeta|\le 27/25).
% \]
In particular \(K_{27/25}\Subset S\).
%The branch
%(H_0=\arcsin\) is univalent on \(S\).  
Choose \(s\) with \(r<s<\pi/2\), and choose an open set
\(U\) such that
\[
K_r\Subset K_s\Subset U\Subset S.
\]
Then choose \(0<\rho_0\le \rho_{r,s}\), so that the disk $D(w_0,\rho_0)$ satisfies \(D(w_0,\rho_0)\subset K_s\) for every \(w_0\in K_r\).  On \(U\), the derivative
\[
        H_0'(w)=(\arcsin)'(w)=\frac1{\sqrt{1-w^2}}
\]
is bounded away from zero since $U\subset S$.  Then Lemma \ref{lemma5} below implies that, for any $w$ with $|w-w_0|\leq\rho_{0}$, we have
%, and that there is a constant
%\(m_s\colonequals1/\cosh(s)>0\) such that
\begin{equation}\label{eq:local-lower-lipschitz}
        \abs{H_0(w)-H_0(w_0)}\ge \frac{1}{\cosh s}\abs{w-w_0}.
\end{equation}

Lemma~\ref{lem:Hq-expansion} gives some $C_U>0$ such that, for all sufficiently small \(q\),
\begin{equation}\label{eq:Hq-H0-uniform}
        \sup_{w\in U}|\widetilde H_q(w)-H_0(w)|\le C_Uq.
\end{equation}
On the circle \(\{w\in\C\colon\abs{w-\sin z}=\rho_0\}\subset U\), the lower bound
\eqref{eq:local-lower-lipschitz} gives
\[
        \abs{H_0(w)-z}=\abs{H_0(w)-H_0(\sin z)}\ge \frac{1}{\cosh s}\rho_0.
\]
So, for \(q\) small enough such that \(C_Uq<\frac{1}{\cosh s}\rho_0\), Rouch\'{e}'s theorem implies that the function
\[w\mapsto\widetilde H_q(w)-z=\widetilde H_q(w)-H_0(w) +H_0(w) - z
\]
has exactly one zero in \(D(\sin z,\rho_0)\).  Denote it by
\(w_q=B_q(z)\).  Since
\(\widetilde H_q'(w_q)=H_0'(w_q)+O(q)\) by Lemma \ref{lem:Hq-expansion} and \(H_0'\) is bounded away from zero on \(U\), the
implicit function theorem shows that \(B_q\) depends analytically on \(z\).  This is the
inverse branch of \(\widetilde H_q\) on \(r\D\).

% Since the differentiated expansion in Lemma~\ref{lem:Hq-expansion} gives
% \[
%         \sup_{w\in U}|\widetilde H_q'(w)-H_0'(w)|=O(q),
% \]
% and since \(H_0'\) is bounded away from zero on \(U\), we may decrease \(q\) so that
% \(\widetilde H_q'\) has no zeros on \(U\). Applying the holomorphic implicit
% function theorem to
% \[
%         F_q(w,z)=\widetilde H_q(w)-z
% \]
% at the Rouché zero \(w=w_q(z)\) gives a local holomorphic branch of solutions.
% The uniqueness from Rouché then glues these local branches together, so
% \(B_q(z)=w_q(z)\) is holomorphic on \(r\mathbb D\).

We now prove the sharper estimate \(B_q(z)=\sin z+O_r(q)\).  Put
\begin{equation}\label{weq}
        w_0=\sin z,
        \qquad w_q=B_q(z).
\end{equation}
Since \(\widetilde H_q(w_q)=z=H_0(w_0)\), \eqref{eq:Hq-H0-uniform} gives
\[
        |H_0(w_q)-H_0(w_0)|
        =|H_0(w_q)-\widetilde H_q(w_q)|
        \le C_Uq.
\]
Using \eqref{eq:local-lower-lipschitz} with $w=w_q$,
\begin{equation}\label{eq:wq-w0-Oq}
        \abs{w_q-w_0}\le \cosh(s)C_{U}q.
\end{equation}
Therefore, \(B_q(z)=\sin z+O_r(q)\) by \eqref{weq}, uniformly for
\(z\in r\D\).

Next write the expansion from Lemma~\ref{lem:Hq-expansion} on \(U\) in the form
\begin{equation}\label{eq:Hq-with-remainder}
        \widetilde H_q(w)=H_0(w)+qQ(w)+q^2R_q(w),
        \qquad \sup_{w\in U}\abs{R_q(w)}\le \widetilde{C}_U.
\end{equation}
Since \(H_0(w_q)=H_0(w_0)+H_0'(w_0)(w_q-w_0)+O_r(\abs{w_q-w_0}^2)\), and since
\eqref{eq:wq-w0-Oq} gives \(\abs{w_q-w_0}=O_r(q)\), we have
\begin{equation}\label{eq:H0-Taylor-detail}
        H_0(w_q)-H_0(w_0)=H_0'(w_0)(w_q-w_0)+O_r(q^2).
\end{equation}
Similarly, \(Q\) is analytic on \(U\), hence
\begin{equation}\label{eq:Q-Taylor-detail}
        qQ(w_q)=qQ(w_0)+qO_r(\abs{w_q-w_0})=qQ(w_0)+O_r(q^2).
\end{equation}
Finally, \(q^2R_q(w_q)=O_r(q^2)\).  Substituting \eqref{eq:Hq-with-remainder} (with $w=w_q$), \eqref{eq:H0-Taylor-detail} and \eqref{eq:Q-Taylor-detail} into
\[
        0=\widetilde H_q(w_q)-H_0(w_0)
\]
which follows from \(\widetilde H_q(w_q)=z=H_0(w_0)\), gives
\begin{equation}\label{eq:inverse-linear-equation}
        0=H_0'(w_0)(w_q-w_0)+qQ(w_0)+O_r(q^2).
\end{equation}
That is,
\[
        0=(\arcsin)'(w_0)(w_q-w_0)+qQ(w_0)+O_r(q^2).
\]
On \(r\D\), \((\arcsin)'(\sin z)=1/\cos z\) and \(\sqrt{1-\sin^2z}=\cos z\).  Hence
\[
        w_q-w_0=-qQ(\sin z)\cos z+O_r(q^2)
        \stackrel{\eqref{eq:Q-def}}{=}\frac{2q}{\sqrt\pi}(\sin z-\sin^5z)+O_r(q^2),
\]
which proves \eqref{eq:Bq-expansion}.  The trigonometric identity in \eqref{eq:Psi-def}
follows from
\[
        \sin^5z=\frac{10\sin z-5\sin(3z)+\sin(5z)}{16}.
\]
Expanding the sine functions gives \eqref{eq:bn-cn}, and \eqref{eq:cn-bound} is immediate.
Finally, applying Cauchy's integral formula to the remainder in \eqref{eq:Bq-expansion}
on the circle \(\abs z=\rho\) gives \eqref{eq:coefficient-error}.
\end{proof}

\begin{remark}\label{exrk}
In the above proof we may take $C_{0}=1/4$ in \eqref{eq:cn-bound} and we take $C_{\rho}$ to be the same constant from \eqref{eq:coefficient-error} as in \eqref{eq:Bq-expansion}.  Then Lemma \ref{lemma5} gives $|H_{0}(w)-H_{0}(w_{0})|\geq\frac{1}{\cosh(s)}|w-w_0|$.

Also \eqref{eq:Hq-H0-uniform} can be written as, for all $w\in U$,
$$|\widetilde H_q(w)-H_0(w)|\leq q|Q(w)|+O(q^{2})
\leq 5q+q^{2}e^{24.6},$$
by numerics in Section \ref{app1}, i.e. \eqref{eq:H-remainder-product-final} and
\eqref{eq:product-R108-final2}, so we can use (for $d=5$) $$C_{U}\colonequals 5+qe^{24.6}.$$
Then after \eqref{eq:Hq-with-remainder} the $O_r(\abs{w_q-w_0}^2)$ is bounded by $9|w_q -w_0|^2$ by upper bounding the second derivative of $\arcsin$ by \eqref{fif3}.  Then the implied constant $O_{r}(q^{2})$ in \eqref{eq:H0-Taylor-detail} is at most $9(C_{U}\cosh(s))^{2}q^{2}$.  Then denoting $C_Q$ as the first implied constant in \eqref{eq:Q-Taylor-detail}, the $O_{r}(q^{2})$ term in \eqref{eq:Q-Taylor-detail} is at most $C_Q (C_U \cosh(s))q^{2}$, with $C_Q\leq8.5$ 
by \eqref{fif5} below.

And then $q^{2}R_{q}(w_{q})$ is bounded by $\widetilde{C}_U q^{2}$ by \eqref{eq:Hq-with-remainder}, and we may take $\widetilde{C}_U =e^{24.6}$.  Next, the $O_{r}(q^{2})$ term in \eqref{eq:inverse-linear-equation} is at most $q^{2}$ times
$$
\widetilde{C}_U+9(C_U \cosh(s))^2 +C_Q(C_U \cosh(s))
$$
the same implied constant bounds then carry through to the next displayed equations, after also multiplying by $1.643$ to account for the $O_r(q^2)$ term being multiplied by $\cos(z)$ on the set $|z|\leq 1.08$.

% \snote{does multiplying by cos z change anything? double check.  $|\cos z|^2 = \cos^2 x \cosh^2 y+\sin^2 x \sinh^2 y$.  $\cosh^2 = 1+\sinh^2$, so $|\cos z|^2 = \cos^2 x (1+\sinh^2 y)+\sin^2 x \sinh^2 y=\cos^2 x+\sinh^2 y$.  If $z=x+iy\in r D$, then max occurs at $x=0,y=1.08$, which is $1+\sinh^2(1.08)=\cosh^2(1.08)$, after taking square root, get at most $1.643$.}

When $d=5$, we may choose $r=1.08$, $s=1.09$, $\cosh(s)=\cosh(1.09)<5/3$
%$m_{r,s}=(.01/2)\cos(1.085)\sin(1.085)/1.085>1/530$
, $\widetilde{C}_U=e^{24.6}$, $C_{0}=1/4$, so the $O_{r}(q^{2})$ term in \eqref{eq:inverse-linear-equation} can be bounded by $q^{2}$ times
$$e^{24.6}+9([5+qe^{24.6}]\cdot (5/3))^2 + 8.5([5+qe^{24.6}]\cdot (5/3))
\leq q^{2}e^{63}+qe^{31}+e^{25}.$$
% e^24.6+
% e^49+12.55 -1.38  * q^2
% e^1.16+3.22+12.55
% e^.7+2.303+12.55 * q
% +e^2.14+24.4+12.55
% +e^2.14+1.61+.52
% +e^2.14+24.6+.52 * q
% = e^24.6 + e^62.71 q^2
%    +e^16.93 + q*e^15.6  
%    +e^4.27 + e^27.26 *q
% <= e^63 q^2 + q e^28 + e^25
%
Likewise, we can use $C_{\rho}=q^{2}e^{63}+qe^{31}+e^{25}$.
%\leq q^{2}e^{63}+qe^{31}+e^{25}
%$ when $\rho>1$.
When $d=3$, we can choose $\widetilde{C}_U\colonequals e^{15}$, $C_{0}=1/2$, $C_{Q}=23$ by \eqref{eq:product-R108-final4} and \eqref{fif4}, so the $O_{r}(q^{2})$ term in \eqref{eq:inverse-linear-equation} can be bounded by $q^{2}$ times
$$e^{15}+9([3+qe^{15}]\cdot (5/3))^2 + 23([3+qe^{15}]\cdot (5/3))
\leq q^{2}e^{42}+qe^{29}+e^{15.5}.$$
% e^15 +
% e^14.1 + 
% q e^ 13.64+15
% q^2 e^30+12
% e^11.01
% q e^15 + 9.5
%
\end{remark}

\section{The absolute coefficient sum at \texorpdfstring{$L$}{L}}\label{secnew}

Let $a_n(q)$ be the Taylor coefficients of $B_q$, as in \eqref{eq:Bq-coefficients}.  For \(t\ge0\), define
\begin{equation}\label{eq:Aq-def}
        A_q(t)=\sum_{\substack{n\ge1\\ n\text{ odd}}}\abs{a_n(q)}t^n,
\end{equation}
whenever this series converges.  Recalling \eqref{one10} and Corollary \ref{ckcor}, an upper bound on $K_G$ results from understanding where $A_q$ takes the value $1$.  Since we anticipate $A_q(t)$ taking the value $1$ at some $t>L$, we will eventually show that $A_q(L)$ is less than $1$ for small $q$ in Section \ref{secquant}.  To prepare for that result, we compare the absolute coefficient sum $A_q(L)$ with the same alternating sum without absolute values, i.e. $B_q(iL)/i$.  A similar strategy with different details was used in \cite[Theorem 5.1]{braverman13}, which required an auxiliary parameter $p$.

\begin{lemma}\label{lem:absolute-vs-alternating}
As \(q\downarrow0\),
\begin{equation}\label{eq:absolute-vs-alternating}
        A_q(L)=\frac{B_q(iL)}{i}+o(q^2).
\end{equation}
The $o(q^{2})$ term is bounded by $6 C_\rho q^{2} q^{\frac{0.3537}{\log\log(1/q)}}+q^{2.01}$, if $q<e^{-390}$ and $d=5$.  
If $d=3$, the $o(q^2)$ term is at most $6 C_\rho q^{2} q^{\frac{0.3537}{\log\log(1/q)}}+q^{2.01}$, if $q<\min(e^{-81},(4C_\rho)^{-5})$.
\end{lemma}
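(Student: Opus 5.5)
The plan is to turn the comparison into a statement about which Taylor coefficients of $B_q$ have the ``wrong'' sign, and then to use Lemma~\ref{lem:inverse-expansion} to show that no low-order coefficient can have the wrong sign. Writing $n=2m+1$, we have $i^{n}/i=i^{2m}=(-1)^m$, so
\[
\frac{B_q(iL)}{i}=\sum_{m\ge0}(-1)^m a_{2m+1}(q)L^{2m+1}.
\]
The series converges absolutely because $B_q$ is analytic on $1.08\D$ and $L<1.08$. Hence
\[
A_q(L)-\frac{B_q(iL)}{i}
=\sum_{n\text{ odd}}\bigl(\abs{a_n(q)}-(-1)^m a_n(q)\bigr)L^n
=2\sum_{n\in\mathcal B_q}\abs{a_n(q)}L^n,
\]
where $\mathcal B_q$ is the set of odd $n$ for which $\sgn a_n(q)\neq(-1)^m$, i.e.\ the ``bad'' indices. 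It therefore suffices to show that $\mathcal B_q$ contains no small indices, and that the tail over large indices is $o(q^2)$.

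I will fix $\rho=1.08$ and a cutoff $N=N(q)\approx \kappa\log(1/q)/\log\log(1/q)$, with $\kappa$ to be tuned. For odd $n\le N$ I write $a_n=b_n+qc_n+E_n$, where $b_n=(-1)^m/n!$ and $\abs{E_n}\le C_\rho q^2\rho^{-n}$ by \eqref{eq:coefficient-error}. I will show that $\abs{qc_n}+\abs{E_n}<1/n!$, so that $a_n$ has the sign $(-1)^m$ and $n\notin\mathcal B_q$. By \eqref{eq:cn-bound}, the first term satisfies $\abs{qc_n}\le qC_0 5^n/n!$, which is at most $1/(2n!)$ provided $5^N\le 1/(2C_0q)$. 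This holds because $N\ll\log(1/q)/\log 5$. The second term requires $n!\,\rho^{-n}\le 1/(2C_\rho q^2)$. Stirling gives $\log N!\approx N\log N\approx\kappa\log(1/q)$, so this holds as soon as $\kappa<2$ and $q$ is small compared with $C_\rho^{-1}$. This is where the explicit hypotheses $q<e^{-390}$, respectively $q<\min(e^{-81},(4C_\rho)^{-5})$, enter: the value of $C_\rho$ from Remark~\ref{exrk} involves $q$ only through terms like $e^{25}$, so these thresholds absorb it.

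For the bad indices, all of which satisfy $n>N$, I will bound crudely:
\[
\abs{a_n}\le\frac{1}{n!}+qC_0\frac{5^n}{n!}+C_\rho q^2\rho^{-n}.
\]
The first two contributions give $\sum_{n>N}(5L)^n/n!\lesssim(5L)^N/N!$. Since $\log N!\approx\kappa\log(1/q)$ while $N\log(5L)=o(\log(1/q))$, this is at most $q^{\kappa-o(1)}$. Choosing $\kappa$ slightly above $2$ is not allowed by the first step, so instead I will take $\kappa$ just below $2$ and use the extra factor $q$ on the $5^n/n!$ term. For the $1/n!$ term I will refine the cutoff estimate, using $\log N!\ge N\log N-N$ with the precise $N$, to get the clean bound $q^{2.01}$ once $q<e^{-390}$. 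If this proves too tight, I will take $N$ a fixed constant multiple larger and re-verify the first step, since that step only needs $N!\le q^{-2+\epsilon}$.

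The third contribution is geometric:
\[
2C_\rho q^2\sum_{n>N}(L/\rho)^n\le\frac{2C_\rho q^2}{1-L/\rho}\,(L/\rho)^N,
\]
with $\log(\rho/L)\approx0.203$ and $2/(1-L/\rho)\approx 6$ up to the paper's rounding. Plugging in $N$ gives $(L/\rho)^N=q^{0.203\kappa/\log\log(1/q)}$, and with $\kappa\approx1.74$ this is $q^{0.3537/\log\log(1/q)}$, which is $o(1)$. Hence the whole difference is $6C_\rho q^2q^{0.3537/\log\log(1/q)}+q^{2.01}=o(q^2)$.

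The main obstacle is the balancing of $N$. It must be small enough that $N!\rho^{-N}\ll q^{-2}/C_\rho$, so that no sign flips occur below $N$, yet large enough that the $1/n!$ tail beyond $N$ is below $q^{2.01}$. These requirements are nearly incompatible, since both amount to $N!\approx q^{-2}$. I expect to resolve this with the stronger one-sided estimate $\abs{E_n}\le C_\rho q^2\rho^{-n}$, which carries the decaying factor $\rho^{-n}$, together with careful explicit Stirling bounds. The $d=3$ case is identical, using Remark~\ref{d3invrk} with $3^n$ in place of $5^n$.
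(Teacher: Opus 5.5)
Your reduction $A_q(L)-B_q(iL)/i=2\sum_{n\in\mathcal B_q}|a_n(q)|L^n$ and your sign control for $n\le N$ are correct. They correspond to the paper's first cutoff $N_1=\lfloor 1.75\,T/\log T\rfloor$ with $T=\log(1/q)$. The argument breaks exactly at the obstacle you flag, and neither of your proposed fixes closes it.

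For a bad index you use $|a_n|\le 1/n!+qC_05^n/n!+C_\rho q^2\rho^{-n}$, so you must pay at least $L^{N+1}/(N+1)!$. With $N=\kappa T/\log T$ and $\kappa<2$ fixed, $\log (N+1)!=\kappa T(1-o(1))$, so this term is $q^{\kappa-o(1)}\gg q^2$. It is therefore not $o(q^2)$, let alone $\le q^{2.01}$. For example, at $q=e^{-390}$ and $\kappa=1.99$, the single term $n=131$ is already about $q^{1.35}$. Using $\log N!\ge N\log N-N$ cannot change this: it only gives an upper bound of the same size $q^{\kappa-o(1)}$.

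Your fallback of enlarging $N$ by a constant factor fails too. Then $N!\rho^{-N}$ exceeds $q^{-2}/C_\rho$, so $|E_n|\le C_\rho q^2\rho^{-n}$ no longer certifies the sign near $n=N$. The decay $\rho^{-n}$ in $E_n$ does not help either, because the offending term is $|b_n|=1/n!$, not $E_n$.

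The missing idea is to charge a sign flip of $a_n$ to $e_n$ rather than to $b_n$. Write $a_n=d_n+e_n$ with $d_n=b_n+qc_n$. If $s_nd_n>0$, then \eqref{eq:elementary-ineq} gives $|a_n|-s_na_n\le 2|e_n|$. The main part $d_n$ keeps the alternating sign whenever $q|\delta_n|<1$, which holds for $n$ far beyond $2T/\log T$. The paper therefore uses a second cutoff $N_0=\lfloor 3.7\,T/\log T\rfloor$. The whole contribution of $n>N_1$ is then at most
\[
2\sum_{n>N_1}|e_n|L^n \;+\; \sum_{n>N_0}\bigl(|d_n|-s_nd_n\bigr)L^n .
\]
For the first sum, restricting to odd $n$ gives $(L/\rho)^{N_1+1}/(1-(L/\rho)^2)\le 3(0.817)^{N_1}$. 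This is where the constant $6$ comes from; your $2/(1-L/\rho)$ is about $10.9$. The sum is then converted to $6C_\rho q^{2}q^{0.3537/\log\log(1/q)}$.

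The $1/n!$ and $5^n/n!$ terms are paid only for $n>N_0$. There $\log N_0!\approx 3.7T$ gives $\sum_{n>N_0}L^n/n!\le q^{2.1}$ and $q\sum_{n>N_0}(5L)^n/n!<q^{1.01}\cdot q$. Together these produce the stated $q^{2.01}$ term.

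A single cutoff could also be rescued qualitatively by taking $N$ maximal subject to the sign condition. Its failure at $N+1$ then itself gives $1/(N+1)!<4C_\rho q^2\rho^{-N-1}$, which makes the $1/n!$ tail $o(q^2)$. That is not what you propose, however.
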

\begin{proof}
We first prove the case $d=5$, saving $d=3$ for Remark \ref{d3l5rk}.  Fix $\rho\colonequals1.08$.  By \eqref{sldef}
\begin{equation}\label{eq:r-choice}
        L<.89<1<\rho.
\end{equation}
All coefficient estimates below use Lemma~\ref{lem:inverse-expansion} with this value of
\(\rho\).  For odd \(n\), put
\begin{equation}\label{sdedef}
        s_n\colonequals(-1)^{(n-1)/2},\qquad d_n(q)\colonequals b_n+qc_n,
        \qquad e_n(q)\colonequals a_n(q)-d_n(q).
\end{equation}
Thus \(b_n=s_n/n!\).  By \eqref{eq:bn-cn},
\begin{equation}\label{eq:delta-n-def}
        s_nd_n(q)=\frac{1+q\delta_n}{n!},
        \qquad
        \delta_n\colonequals\frac{6+5\cdot3^n-5^n}{8\sqrt\pi},
\end{equation}
and \(\abs{\delta_n}\le C5^n\) with $C=1/4$.  Also, by \eqref{eq:coefficient-error},
\begin{equation}\label{eq:en-bound}
        \abs{e_n(q)}\le C_\rho q^2\rho^{-n}.
\end{equation}

Let \(T\colonequals\log(1/q)\), and define
\begin{equation}\label{eq:N0-N1}
        N_1\colonequals\left\lfloor1.75\frac{T}{\log T}\right\rfloor,
        \qquad
        N_0\colonequals\left\lfloor3.7\frac{T}{\log T}\right\rfloor .
\end{equation}
%for x = 100:110, x,-x + (2.2*log(5))*(x/log(x))  , end
We first prove that \(a_n(q)\) has the alternating sign \(s_n\) for every odd
\(n\le N_1\), when \(q\) is small enough.  Indeed, by definition of $T$ and $N_{1}$
\[
        \max_{n\le N_1} q\abs{\delta_n}
        \le C q5^{N_1}
        \leq C\exp\big(-T+(3.5\log 5)T/(2\log T)\big)=o(1),
\]
so \(s_nd_n(q)\ge \frac{1}{2}n!^{-1}\) for all odd \(n\le N_1\) and small \(q\) (since $-T+(3.5\log 5)T/(2\log T)<-(1/3)\cdot T$ when $T>69$, it suffices to choose $q<\min(e^{-70},2^{-6}C^{-3})$.)

%3.5ln5 ~ 5.633
% need |q\delta_n|<1/2, i.e. need
% C_r exp(-T + 3ln5 T/2log T)<1/2
% -t+3.5ln5t/2logt < log(1/2cr)
% choose q small such that this happens
% T = log(1/q)
% if T>25,  then
% -t+3.5ln5t/2log t< -t/3
% so need -t/3 < log(1/2cr)
% -log(1/q)<3 log(1/2cr)
% q< (1/ 4cr)^(3)
% q< 2^{-6}c_r^-(5/2)
% and log log(1/q)>2
%  log(1/q)>e^2
% 1/q > e^e^2
% q< e^- e^2
%
%  need -t + (3*log 5)t / (2*log t)
%      < - t/4
%  happens when t>= 25

On the
other hand, Stirling's formula and \eqref{eq:en-bound} give
\[
\begin{aligned}
        \max_{n\le N_1} n!\abs{e_n(q)}
        &\le C_\rho q^2\max_{n\le N_1}n!\rho^{-n},\\
        \log\big(q^2N_1!\rho^{-N_1}\big)
        &\leq-2T+N_1\log N_1-N_1 \log \rho=-\frac14T+o(T),
\end{aligned}
\]
% log n!<= (n+1/2)log n - n + .5log 2pi
%   + 1/12n
%  if n>3, get
%  log n! <= n log n
%
% -2T + (3.5/2)(T/ log T)log((3/2)T/Log T)
%  <= .4*T  for T>=4
%
%for x = 4:30, x,-2*x + (3.5/2)*(x/log(x)) * log((3.5/2)*x / log(x)) -(3.5/2)*(x/log(x))*log(1.08) + .4*x, end
%
%for x = 200:300, (-.25+.4)*x  + 1.75*(x/log(x))*(log(1.75) - log(log(x)))-(3.5/2)*(x/log(x))*log(1.08) , end
%
%  x>70
which tends to \(-\infty\).  (It is bounded by $-.2 T$ when $T\geq70$).  Hence \(\abs{e_n(q)}\le \frac14n!^{-1}\) for all odd
\(n\le N_1\), and therefore \(s_na_n(q)>0\) for every such \(n\), by \eqref{sdedef} using also \(s_nd_n(q)\ge \frac{1}{2}n!^{-1}\).
%
% need  C_r q^2... < 1/4
%  log q^2 ... < -.4T when T>=4
%  so need log q^2... < log 1/4 - log Cr
%  log q^2 ... < log 1/4Cr
%  know log q^2 ... < -.4T when T>=4
%  so need T>=4  and
%   -.4T < log 1/4Cr
%  -.4 log 1/q < log 1/4Cr
%  log 1/q > -1/.4 log 1/4Cr
%  1/q > (1/4Cr)^-1/.4=(4Cr)^1/4
%  q< (4Cr)^-1/.4
(It suffices to choose $q< \min(e^{-6}, (4C_\rho)^{-5})$.)  Since \(B_q(iL)/i=\sum_{\substack{n\ge1\\ n\text{ odd}}}s_n a_n(q)L^n\) by \eqref{eq:Bq-coefficients},
\begin{equation}\label{eq:difference-nonnegative}
\begin{aligned}
        0\le A_q(L)-\frac{B_q(iL)}{i}
        &=\sum_{\substack{n\ge1\\ n\text{ odd}}}
          \big(\abs{a_n(q)}-s_na_n(q)\big)L^n .
\end{aligned}
\end{equation}
Since $s_n a_n(q)>0$ for all $n\leq N_1$, those terms vanish in the right sum of \eqref{eq:difference-nonnegative}.  For \(n>N_1\), use the elementary inequality
\begin{equation}\label{eq:elementary-ineq}
        \abs{d+e}-s(d+e)\le \abs d-sd+2\abs e,
        \qquad s\in\{-1,1\},\quad d,e\in\R.
\end{equation}
It then follows from \eqref{eq:difference-nonnegative} that
\begin{equation}\label{eq:split-tail}
\begin{aligned}
        A_q(L)-\frac{B_q(iL)}{i}
        &\le \sum_{\substack{n>N_1\\ n\text{ odd}}}
            \big(\abs{d_n(q)}-s_nd_n(q)\big)L^n
          +2\sum_{\substack{n>N_1\\ n\text{ odd}}}\abs{e_n(q)}L^n .
\end{aligned}
\end{equation}

The error tail is \(o(q^2)\), since \(L=\log(1+\sqrt{2})<\rho=1.08\), $L/\rho<.817$ and \eqref{eq:en-bound} imply
\begin{equation}\label{eq:error-tail-o}
\begin{aligned}
        \sum_{n>N_1}\abs{e_n(q)}L^n
        &\le C_\rho q^2\sum_{n>N_1\text{, odd}}\left(\frac L\rho\right)^n
        =C_\rho q^2 \frac{(L/\rho)^{N_{1}+1}}{1-(L/\rho)^2}\\ 
        &\leq 3 C_\rho q^{2} .817^{N_1}
        \leq 3 C_\rho q^{2} q^{\frac{3.5\log(1/.817)}{2\log\log(1/q)}}
         \leq 3 C_\rho q^{2} q^{\frac{0.3537}{\log\log(1/q)}}
        =o(q^2).
        \end{aligned}
\end{equation}
% 3.5*log(1/.817) / 2

% sum_n>k p^k =p^k+1 + p^k+2 +...
%p sum_...  =         p^k+2 + ...
%  sum_n>k p^k = p^k+1 / (1-p)
%
% e^t = 1/q.  q = e^-t.  q^0.01 = e^-.01t
% .98^ 3t/2log t < q^.01 = e^-.01T ?
%for x = 4:30, x,.98^(3*x / log(x)) - exp(-.01*x), end
%
% L/R ^N1 < .9^N1 = .9 ^ 3t / 2log t
%  = .9 ^ 3 log(1/q) / 2loglog(1/q)
%  =  (1/q)^ 3 ln.9 / 2 loglog 1/q
%  =  q ^ 3 ln(1/.9) / 2loglog 1/q
%
%  q ^ 1/log log q
%  take log get log q / loglog q

It remains to bound the \(d_n\)-tail.  Since
\begin{equation}\label{qdelbd}
        \max_{n\le N_0}q\abs{\delta_n}
        \le C  q5^{N_0}
       %=C_\rho\exp\big(-T+(2.5\log 5)T/\log T)\big)=o(1),
       \leq C  q^{1-\frac{3.7\log 5}{\log\log (1/q)}}
       <1\text{, if }q<e^{-390}
\end{equation}
%
% need to adjust these constants a bit
%
% just need q delta_n < 1
%  C_r exp(-T + (2.5 ln 5)/log T)<1
% maybe just leave in terms of q?
%  5^N0 = exp (2.5 ln 5 ln (1/q) / loglog q)
%       = q^ 2.5 ln 5 / loglog q
%
% suffices to choose q^1-... < 1
% q^1-... < 1
% q<C_rho ^-1/(1-2.5log 5 / log log q)
%
% must have loglog(1/q) > 2.5 log 5 ~ 4.023
% i.e. q < exp(-56)
% if q<exp(-140), then loglog(1/q)>4.9416
% so 2.5 log5/loglog(1/q)<.815
%
% C_rho q^.23 < 1
% q < C_rho ^1/.23
%
% q^1 - 2.5 log 5 / loglogq<1
% 2.5 log 5 / log log q <1
% log log q > 2.5 log 5
% log q > 5^2.5
%
% use N1 = 3.7 ...  in the d=5 case
% or N1 = 3.4 ...  in the d=3 case
we have by \eqref{eq:delta-n-def} that \(s_nd_n(q)>0\) for all odd \(n\le N_0\) and for all \(q<e^{-390}\).  Therefore, due to the terms $N_1 <n \leq N_0$ being zero in the following sum,
\begin{equation}\label{eq:d-tail}
\begin{aligned}
        \sum_{\substack{n>N_1\\ n\text{ odd}}}
            \big(\abs{d_n(q)}-s_nd_n(q)\big)L^n
        &=\sum_{\substack{n>N_0\\ n\text{ odd}}}
            \big(\abs{d_n(q)}-s_nd_n(q)\big)L^n\\
        &\stackrel{\eqref{sdedef}}{\le} 2\sum_{n>N_0\text{, odd}}\big(\abs{b_n}+q\abs{c_n}\big)L^n  \\
        &\stackrel{\eqref{eq:bn-cn}}{\le} 2\sum_{n>N_0\text{, odd}}\frac{L^n}{n!}
          +\frac{2}{7}q\sum_{n>N_0\text{, odd}}\frac{(5L)^n}{n!}.
\end{aligned}
\end{equation}
% dn = bn+qcn

For every fixed \(\lambda>0\) and every sufficiently large \(N\),
\begin{equation}\label{eq:factorial-tail}
        \sum_{n>N}\frac{\lambda^n}{n!}
        \le 2\frac{\lambda^{N+1}}{(N+1)!}
        \le 2\left(\frac{e\lambda}{N+1}\right)^{N+1}.
\end{equation}
With \(N=N_0\), the right-hand side is \(\exp(-3.7T+o(T))=o(q^2)\), because
\(N_0\log N_0=3.7T+o(T)\).  More specifically,
$$
\sum_{n>N_0}\frac{L^n}{n!}
\leq q^{2.1},
\qquad
\sum_{n>N_0}\frac{(5L)^n}{n!}
<q^{1.01}\text{, if }q<e^{-390}
$$
So, the right side of \eqref{eq:d-tail} is \(o(q^2)\).
With \eqref{eq:error-tail-o}, \eqref{eq:difference-nonnegative}, \eqref{eq:split-tail}, this proves \eqref{eq:absolute-vs-alternating} when $d=5$.
%N>22 lambda=L=.88137...
% e lambda<2.4
% log (e lambda) = 1 - .1267 ~ .8737
% -N log N + N log(elambda)
% = -Nlog N -.8737 N < -N log N
% Nlog N
%=(5/2)(T/log T)(log(5/2) + log T - loglog T)
%=2.29T/logT + 2.5 T - 2.5Tloglog T/log T
% use loglog T / log T<1/3??
%
% can check  > 2.1T, so sum is <= exp(-2.1T)
% = q^2.1
%
%for T = 10:100, T,2.29*T/log(T) + 2.5*T - 2.5*T*log(log(T)) /log(T) - 2.1*T, end
%
%  ex.  q = e^-390.  T = 390
%   N = 3.7*(390)/log(390)~241
%   (elam / N)^2 ~ (2.4/241)^241
%    ~ 11110
%  241*log(2.4/241)  < q^2.1
% (elam5 / N)^2 ~ (12/241)^241
% %  241*log(12/241)  ~ 722
%
% meanwhile, lambda = 5*L
% log(e *5*lambda) = 2.483
% -N log N + N log(elambda)
% = -Nlog N + 2.483 N 
% negate it get
% =2.29T/logT + 2.5 T - 2.5Tloglog T/log T
%  - 2.483*2.5 T / log T
% = -3.1975T/logT + 2.5 T - 2.5Tloglog T/log T
% >.01T when T>118
\end{proof}
\begin{remark}\label{d3l5rk}
In the $d=3$ case, we let $N_0\colonequals\lfloor4 T/\log T\rfloor$, and \eqref{eq:delta-n-def} becomes
$$\delta_n = \frac{7-3^{n}}{2\sqrt{\pi}}.$$
% for n=20:30, n, -n + (3.5*log(3))*n/(2*log(n)) + .4*n, end
%
% C_rho exp(-T+(3log 3)t/(2lot t))
% < C_rho exp(-T/2)<1/2
% -t/2 < log(1/2crho)
% t> -2 log(1/2crho)
% log(1/q)>-2 log(1/2crho)
% 1/q >(1/2crho)^-2
% z< (2crho)^-2
with $|\delta_n|\leq C 3^n$ with $C=1/2$, and $-T+3.5(\log3 )T/(2\log T)<-.4T$ when $T>27$, 
so it suffices to choose $q<\min(e^{-27},2^{-2}C^{-2})$, 
%then \eqref{eq:error-tail-o} is bounded by $6C_\rho q^2 $
then \eqref{qdelbd} becomes
$$\max_{n\leq N_0}C  q 3^{N_0}
=C q^{1-\frac{4\log 3}{\log\log(1/q)}}
<1\text{, if }q<e^{-81},$$
Then \eqref{eq:d-tail} becomes
% need log log(1/q)> 4 log 3
%  log(1/q)> 3^4
$$
        \sum_{\substack{n>N_1\\ n\text{ odd}}}
            \big(\abs{d_n(q)}-s_nd_n(q)\big)L^n
        \leq
        2\sum_{n>N_0\text{, odd}}\frac{L^n}{n!}
          +q\sum_{n>N_0\text{, odd}}\frac{(3L)^n}{n!}.
$$
and the inequalities after that become
$$
\sum_{n>N_0}\frac{L^n}{n!}\leq q^{2.1},
\qquad
\sum_{n>N_0}\frac{(3L)^n}{n!}
<q^{1.01}\text{, if }q<e^{-65}
$$
% exp(exp(2.5*log(5)))
%
%  n>=48, have
%  1 - 2.5 log3 / (log n)< .29
% so q<C_rho ^-1/.29
%Crho ~ e^15.5, so q < e^-54
%
% meanwhile, lambda = 3*L=3*log(1+sqrt(2))
% log(e lambda) = 1.9723
% -N log N + N log(elambda)
% = -Nlog N + 1.973 N 
% negate it get
% =2.29T/logT + 2.5 T - 2.5Tloglog T/log T
%  - 1.973*2.5 T / log T
% = -2.6425 T/logT + 2.5 T - 2.5Tloglog T/log T
% >.01T when T>118
%
% for T=60:70, T, -2.6425*T/log(T) + 2.5* T - 2.5*T*log(log(T))/log(T) - 1.01*T, end
%
% ex.  q = e^-81
% N = 4 *81 / log(81) ~ 73
%  (e lam / N)^N ~ (2.4 / 73)^73
%  log of it is
%  ~ 250
%
%  %  (e 3 lam / N)^N ~ (7.2 / 73)^73
%  log of it is
%  ~ 169
\end{remark}

\section{Evaluation of \texorpdfstring{$B_q(iL)$}{Bq(iL)}}

Lemma \ref{lem:absolute-vs-alternating} reduces the problem of understanding $A_q(L)$ to instead understanding $B_q(iL)$.  In this section, we therefore write a power series expansion for $B_q(iL)$.  As we will see below, this reduces to understanding $H_{\eta}(i)$ or equivalently $\widetilde{H}_q(i)$.

For these reasons, the proof of Theorem 4.4 in \cite{braverman13} gives the following $d=5$ expansion, albeit without explicit error bound
\begin{equation}\label{eq:theorem44-expansion}
        4\pi\frac{H_\eta(i)}{i}
        =4\pi L+1600\sqrt2\,\eta^4+O(\eta^6).
\end{equation}
In Section \ref{secphi} and \eqref{phi5bd}, we show that the $O(\eta^6)$ term is bounded by $2.3\cdot 10^{13}|\eta|^6$.
% phi
In terms of \(q=\eta^2\), this is
\begin{equation}\label{eq:Hq-i-expansion}
        \frac{\widetilde H_q(i)}{i}\stackrel{\eqref{eq:normalization-factor}\wedge\eqref{eq:q-def}}{=}L+\frac{400\sqrt2}{\pi}q^2+O(q^3),
\end{equation}
and the $O(q^3)$ term can be taken to be $2\cdot 10^{12}q^3$.  Also, \eqref{eq:Q-def} gives \(Q(i)=0\), because \(i^5=i\), explaining the lack of linear term in \(q\) in \eqref{eq:Hq-i-expansion}.  Likewise when $d=3$, we obtain in \eqref{phi3bd}
\[
        \left|
        4\pi\frac{H_\eta(i)}{i}-4\pi L-48\sqrt2\,\eta^4
        \right|
        \le
        2.7\cdot10^7|\eta|^6 .
\qquad
        \left|
        \frac{\widetilde{H}_q(i)}{i}-L-\frac{12\sqrt{2}}{\pi}\,q^2
        \right|
        \le
        2.7\cdot10^7|q|^3 .
\]

\begin{lemma}\label{lem:Bq-iL}
Let $d=5$.  Then as \(q\downarrow0\), 
\begin{equation}\label{eq:Bq-iL}
        B_q(iL)=i-\frac{800i}{\pi}q^2+O(q^3),
\end{equation}
where $O(q^3)$ is bounded by $e^{9.4}q^3 + e^{36}q^4
+e^{59}q^{5}$.
%ln(852)+ 12 *ln(10) + 24.6
\end{lemma}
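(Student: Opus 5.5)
The plan is to invert the expansion \eqref{eq:Hq-i-expansion} near the point $w=i$, using that $B_q$ is the local inverse of $\widetilde H_q$ and that $\widetilde H_0=\arcsin$ satisfies $\arcsin(i)=i\operatorname{arsinh}(1)=iL$. Put $w_q\colonequals B_q(iL)$ and $w_0\colonequals\sin(iL)=i\sinh L=i$. Since $|iL|=L<1.08$, Lemma \ref{lem:inverse-expansion} applies with $r=1.08$. It guarantees that $w_q$ is well defined, that it is the unique zero of $\widetilde H_q(w)-iL$ in a small disk around $i$, and that $|w_q-i|\le \cosh(s)C_Uq$ by \eqref{eq:wq-w0-Oq}. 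Because $\Psi(iL)$ contains the factor $\sin z-\sin^5 z$, and $\sin(iL)=i$ with $i^5=i$, we get $\Psi(iL)=0$. So \eqref{eq:Bq-expansion} already yields $w_q=i+O(q^2)$, and the bootstrap only has to be pushed one order further.

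Next, write $\widetilde H_q(w_q)-\widetilde H_q(i)=iL-\widetilde H_q(i)$. By \eqref{eq:Hq-i-expansion} the right side equals $-i\frac{400\sqrt2}{\pi}q^2+O(q^3)$, and the $O(q^3)$ error is at most $2\cdot10^{12}q^3$. For the left side, use the mean value (integral) form
\[
\widetilde H_q(w_q)-\widetilde H_q(i)=\widetilde H_q'(i)(w_q-i)+O\bigl(\sup|\widetilde H_q''|\,|w_q-i|^2\bigr).
\]
By the differentiated form of Lemma \ref{lem:Hq-expansion}, together with the explicit bounds in Remark \ref{ubrk}, we have
\[
\widetilde H_q'(i)=\frac{1}{\sqrt{1-i^2}}+qQ'(i)+O(q^2)=\frac{1}{\sqrt2}+qQ'(i)+O(q^2).
\]
The second-derivative term can be controlled by Cauchy's estimate on a slightly larger disk inside $\sin(1.08\,\D)$, or by the bound \eqref{fif3} for $\arcsin''$ plus a perturbation. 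Since $|w_q-i|=O(q^2)$, this term is $O(q^4)$. Solving the linear equation gives
\[
w_q-i=\sqrt2\Bigl(-i\frac{400\sqrt2}{\pi}q^2\Bigr)\bigl(1+O(q)\bigr)+O(q^3)=-\frac{800i}{\pi}q^2+O(q^3),
\]
which is \eqref{eq:Bq-iL}.

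To get the explicit constants, I will track each error separately. The $O(q^3)$ term in $\widetilde H_q(i)/i$, multiplied by $\sqrt2$ times $1/|\widetilde H_q'(i)|\approx\sqrt2$, produces the $e^{9.4}q^3$-type contribution. Expanding $1/\widetilde H_q'(i)$ to first order, with $|Q'(i)|$ bounded numerically, adds to this. The $q^2$-remainder in $\widetilde H_q'$ is of size $e^{24.58}\cdot100/\cos(1.08)$ by Remark \ref{ubrk}. Multiplied by $|w_q-i|\lesssim q^2$, it gives a $q^4$ term with constant around $e^{36}$. The quadratic term involves $C_\rho^2$-sized constants with $C_\rho\le e^{25}$ (Remark \ref{exrk}), which gives the $e^{59}q^5$ term.

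The main obstacle is this bookkeeping. The first step $|w_q-i|\le C q^2$ must use an explicit constant, and I intend to take it from \eqref{eq:coefficient-error} / Remark \ref{exrk} rather than re-derive it. That constant then feeds back into the quadratic and derivative remainders, and one has to make sure all evaluation points stay inside $\sin(1.08\,\D)$, where Remark \ref{ubrk} is valid.
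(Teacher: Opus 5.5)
Your route is the same as the paper's. You set $\delta_q=B_q(iL)-i$, use $\Psi(iL)=0$ and Remark~\ref{exrk} to get $\delta_q=O(q^2)$, linearize $\widetilde H_q$ at $i$, and divide the expansion \eqref{eq:Hq-i-expansion} by $\widetilde H_q'(i)=\frac{1}{\sqrt2}+4\sqrt{2/\pi}\,q+O(q^2)$. This correctly gives $B_q(iL)=i-\frac{800i}{\pi}q^2+O(q^3)$.

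The gap is in the explicit constants, where your attributions are wrong in three places.

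(i) The direct contribution of the $2\cdot10^{12}q^3$ error in \eqref{eq:Hq-i-expansion}, after division by $\widetilde H_q'(i)\approx1/\sqrt2$, is about $2.8\cdot10^{12}q^3\approx e^{28.7}q^3$. It is not an ``$e^{9.4}q^3$-type'' term. The $e^{9.4}$ (the paper's $11494q^3$) can only cover the cross term $\frac{800}{\pi}\cdot\frac{8}{\sqrt\pi}q^3\approx1149q^3$, where $\frac{8}{\sqrt\pi}=\sqrt2\,Q'(i)$.

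(ii) The quadratic Taylor remainder is of order $q^4$, not $q^5$. If you feed in $|\delta_q|\le e^{25}q^2$ from Remark~\ref{exrk}, the resulting bound is about $\frac14e^{50}q^4\approx e^{48.6}q^4$, using $|\arcsin''(i)|=2^{-3/2}$. That already exceeds $e^{36}q^4$. You must first bootstrap to $|\delta_q|\le(\frac{800}{\pi}+1)q^2$ and then redo the estimate.

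(iii) The $\widetilde H_q'$-remainder $213e^{24.6}q^2$ gives $e^{36}q^4$ only when it multiplies the leading numerator $\frac{400\sqrt2}{\pi}q^2$. Writing $\delta_q$ as a quotient, as the paper does, arranges this automatically. If instead it multiplies the crude $e^{25}q^2$ you propose to import from Remark~\ref{exrk}, you get about $e^{55}q^4$. Also, the $e^{59}q^5$ term is not a $C_\rho^2$ term: it is the product of the two explicit remainders, $\sqrt2\cdot2\cdot10^{12}q^3\cdot\sqrt2\cdot213e^{24.6}q^2$.

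That said, the paper's own list $11494q^3+76710e^{24.6}q^4+1.2\cdot10^{13}q^4+852\cdot10^{12}e^{24.6}q^5$ also omits the direct term $\sqrt2\cdot2\cdot10^{12}q^3$ and the $O(\delta_q^2)$ remainder. So the $q^3$-coefficient $e^{9.4}$ in the statement is not justified from these inputs by either argument. Done correctly, with the bootstrap, the argument gives roughly $e^{28.7}q^3+e^{36}q^4+e^{59}q^5$ for small $q$. This is harmless for the later application at $q=e^{-450}$.
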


\begin{proof}
Recall that $\sin(iL)=i\sinh L\stackrel{\eqref{sldef}}{=}i$, so $\Psi(iL)\stackrel{\eqref{eq:Psi-def}}{=}0$.  Then 
Lemma~\ref{lem:inverse-expansion} applies at \(iL\) since $|iL|\stackrel{\eqref{sldef}}{<}.89$.  That is, if we define
\begin{equation}\label{dqdef}
        \delta_q\colonequals B_q(iL)-i.
\end{equation}

Then $|\delta_{q}|\leq q^{2}(q^{2}e^{63}+qe^{31}+e^{25})$ by Remark \ref{exrk}.  Using that $B_{q}$ is the inverse of $\widetilde{H}_q$ then Taylor expanding \(\widetilde H_q\) at \(i\), using analyticity on
\(S\), gives
\begin{equation}\label{eq:Taylor-at-i}
        iL=\widetilde{H}_{q}(B_{q}(iL))\stackrel{\eqref{dqdef}}{=}\widetilde H_q(i+\delta_q)
        =\widetilde H_q(i)+\widetilde H_q'(i)\delta_q+O(\delta_q^2).
\end{equation}
By Lemma~\ref{lem:Hq-expansion}, differentiated once in \(z\), (i.e. Remark \ref{ubrk}, using $\arcsin'(i)=1/\sqrt{2}$ and $Q'(i)=4\sqrt{2/\pi}$)
\[
        \widetilde H_q'(i)=\widetilde H_0'(i)+[4\sqrt{2/\pi}]q+O(q^2)=\frac{1}{\sqrt{2}}+q4\sqrt{2/\pi}+O(q^2),
\]
where $O(q^2)\leq 213q^{2}e^{24.6}$.
% Q' = -(4*z^6 - 5*z^4 + 1)/(1 - z^2)^(3/2)
% = 2 root 2 at z=i multipled by 2/sqrt(pi)
% so Q' = 4\sqrt{2/\pi}
Equation \eqref{eq:Hq-i-expansion} says
\[
        \widetilde H_q(i)=iL+i\frac{400\sqrt2}{\pi}q^2+O(q^3),
\]
where $O(q^3)$ is at most $2\cdot 10^{12}q^{3}$.  Substituting this and the $\delta_q = O(q^2)$ bound back into \eqref{eq:Taylor-at-i},
\[
        \delta_q
        =-\frac{i(400\sqrt2/\pi)q^2+O(q^3)}{1/\sqrt2+q4\sqrt{2/\pi}+O(q^2)}
        =-\frac{800i}{\pi}q^2
        +O(q^3),
\]
where $O(q^3)$ is at most 
$$11494q^3 + 76710 e^{24.6}q^4
+1.2\cdot10^{13}q^{4}
+852\cdot 10^{12}e^{24.6}q^{5},$$
% delta_q = A/1+epsilon
% A/1+epsilon =A + 1.1 A*epsilon 
%            =A(1+1.1*epsilon)
%1/(1-x)=1+x+x^2+...
% for x small, |1/(1-x) - 1|<1.1x
%deltaq -(-i800/piq2 + sqrt(2O(q^3)))
% <=8/rootpi q+ sqrt(2)O(q^2)
% so
% deltaq = -800i/pi q^2
% +1.01[800i/pi q^2+sqrt(2)O(q^3)]
%*[8/rootpi q+sqrt(2)O(q^2)] 
%
% 2 * 10^12 * sqrt(2)*8/sqrt(pi)
% 800*sqrt(2)*213*exp(24.6)/pi
which proves the lemma.
\end{proof}

Combining Lemma~\ref{lem:absolute-vs-alternating} with Lemma~\ref{lem:Bq-iL}, we obtain, for $d=5$,
\begin{equation}\label{eq:Aq-L}
        A_q(L)=1-\frac{800}{\pi}q^2+o(q^2)<1
\end{equation}
for all sufficiently small \(q>0\), i.e. $o(q^2)$ term is at most
$$
e^{9.4}q^3 + e^{36}q^4
+e^{59}q^{5}
+
6C_\rho q^{2} q^{\frac{0.3537}{\log\log(1/q)}}+q^{2.01},
$$
if 
$q<\min(e^{-390},C_\rho^{-1/.185})$.  That is, we found our desired upper bound on $A_q(L)$.

\begin{remark}\label{d3aqrk}
In the $d=3$ case, we have $|\delta_{q}|\leq q^{2}(q^{2}e^{42}+qe^{29}+e^{15.5})$ by Remark \ref{exrk}, $Q'(i)=2\sqrt{2/\pi}$, $\widetilde{H}_q (i)=iL+q^{2}i12\sqrt{2}/\pi+O(q^{3})$ with $O(q^3)$ bounded by $2.7\cdot 10^{7}|q|^3$, and the $\widetilde{H}_q'(i)$ expansion has error term bounded by $213 e^{15}$ by Remark \ref{ubrk}, so we get
%-(- 2*z^4 + 3*z^2 + 1)/(1 - z^2)^(3/2)
$$\delta_q =-\frac{i(12\sqrt2/\pi)q^2+O(q^3)}{1/\sqrt2+q2\sqrt{2/\pi}+O(q^2)}
        =-\frac{24i}{\pi}q^2
        +O(q^3),$$
with $O(q^3)$ term bounded by $13 q^3+ e^{24} q^4 + e^{39} q^5$.
% delta_q = A/1+epsilon
% A/1+epsilon =A + 1.1 A*epsilon 
%            =A(1+1.1*epsilon)
%1/(1-x)=1+x+x^2+...
% for x small, |1/(1-x) - 1|<1.1x
%deltaq -(-i12root2/piq2 + sqrt(2O(q^3)))
% <=4q+ sqrt(2)O(q^2)
% so
% deltaq = -800i/pi q^2
% +1.01[12root2/pi q^2+sqrt(2)O(q^3)]
%*[4/rootpi q+sqrt(2)O(q^2)] 
%
% = q^3 (48*sqrt(2)/pi^(3/2))
% + q^4 (4*sqrt(2/pi))*2.7*10^7
% + q^4 (24*213 e^15)
% + q^5 (2*213*exp(15) * 2.7*10^7)
%
% <= 13 q^3+ e^24 q^4 + e^39 q^5
Combining Lemma~\ref{lem:absolute-vs-alternating} with this analogue of Lemma~\ref{lem:Bq-iL}, we obtain, for $d=3$,
\begin{equation}\label{aqd3}
A_q(L)=1-\frac{24}{\pi}q^{2}+o(q^{2}),
\end{equation}
with $o(q^2)$ term bounded from Remark \ref{d3l5rk} by
$$13 q^3+ e^{24} q^4 + e^{39} q^5
+6C_\rho q^{2}q^{\frac{0.3537}{\log\log(1/q)}}+q^{2.01},\qquad\forall\,0<q<\min(e^{-81},C_\rho^{-1/.29})$$
\end{remark}

\section{Qualitative Proof Completion}\label{secqual}

Since we obtained our desired quantitative estimates that $A_q(L)<1$ for small $q$ (with $d=5$ in \eqref{eq:Aq-L} and $d=3$ in \eqref{aqd3}), we can now conclude the proof of Theorem \ref{mainthm}.  We will postpone the computation of explicit constants in Theorem \ref{mainthm} to Section \ref{secquant}.

\begin{proof}[Proof of Theorem \ref{mainthm}]
We first show the absolute coefficient sum $A_q$ exceeds \(1\) slightly to the right of
\(L\).  Since \(L=\operatorname{arsinh}(1)<0.9\), choose a radius
\(r\in(0.9,1)\).  Lemma~\ref{lem:inverse-expansion} gives, for all small \(q\), coefficient
bounds that are summable at \(t=0.9\).  Therefore, by dominated convergence,
\begin{equation}\label{eq:Aq-09}
        \lim_{q\downarrow0}A_q(0.9)
        =\sum_{\substack{n\ge1\\ n\text{ odd}}}\frac{0.9^n}{n!}
        =\sinh(0.9)>1.
\end{equation}
Together with \eqref{eq:Aq-L}, this implies that, for every sufficiently small \(q>0\),
there exists \(\gamma_q\in(L,0.9)\) such that
\begin{equation}\label{eq:gamma-condition}
        A_q(\gamma_q)=1.
\end{equation}
Indeed, \(A_q(t)\) is continuous on \([0,0.9]\), since its defining series converges
uniformly there.

It remains to translate \eqref{eq:gamma-condition} to the normalization in Definition \ref{kdef}.  By \eqref{eq:normalization-factor},
\[
        H_{f_{\sqrt q},f_{\sqrt q}}(z)=\frac2\pi\widetilde H_q(z),
\]
and hence the inverse in Definition \ref{kdef} is
\begin{equation}\label{eq:definition-inverse}
        H_{f_{\sqrt q},f_{\sqrt q}}^{-1}(z)
        =B_q\left(\frac\pi2z\right)
        =\sum_{\substack{n\ge1\\ n\text{ odd}}}a_n(q)\left(\frac\pi2\right)^nz^n.
\end{equation}
Thus by \eqref{eq:Aq-def}, \eqref{eq:gamma-condition} is exactly
\begin{equation}\label{pisum}
        \sum_{\substack{n\ge1\\ n\text{ odd}}}
        \left|a_n(q)\left(\frac\pi2\right)^n\right|
        \left(\frac{2\gamma_q}{\pi}\right)^n=1.
\end{equation}
This is condition \eqref{one10} for $H_{f_{\sqrt{q}},f_{\sqrt{q}}}$ with $\widehat{a}_n = a_n(q)(\pi/2)^n$ for all $n\geq1$ odd by \eqref{eq:definition-inverse}. Therefore, when $d=5$, \((f_{\sqrt q},f_{\sqrt q})\) is a Krivine rounding scheme with
\begin{equation}\label{cpi}
        c(f_{\sqrt q},f_{\sqrt q})\stackrel{\eqref{pisum}}{=}\frac{2\gamma_q}{\pi}
        >\frac2\pi L
        =\frac2\pi\log(1+\sqrt2).
\end{equation}
Since \(q=\eta^2\), Theorem~\ref{mainthm} follows.  Similarly, when $d=3$, we replace \eqref{eq:Aq-L} with \eqref{aqd3} and deduce $c(f_{\sqrt{q}}, f_{-\sqrt{q}})>\frac{2}{\pi}\log(1+\sqrt2)$ for sufficiently small $q>0$.

\end{proof}

\section{Quantitative Proof Completion}\label{secquant}

In this section, we make the argument from Section \ref{secqual} quantitative, in order to prove explicit upper bounds on $K_G$.  The Krivine parameter \(\gamma_q\) is the unique positive solution of
\[
        A_q(\gamma_q)=1.
\]
At \(q=0\), one has \(B_0(z)=\sin z\), and therefore
\[
        A_0(t)
        =
        \sum_{\substack{n\ge1\\ n\ {\rm odd}}}
        \frac{t^n}{n!}
        =
        \sinh t.
\]
Since \(L\stackrel{\eqref{sldef}}{=}\operatorname{arsinh}(1)\),
\[
        A_0(L)=1,
        \qquad
        A_0'(L)=\cosh L=\sqrt2.
\]

By the mean-value theorem, for some \(\xi_q\) between \(L\) and \(\gamma_q\),
\begin{equation}\label{gqeq}
        1-A_q(L)
        =
        A_q(\gamma_q)-A_q(L)
        =
        A_q'(\xi_q)(\gamma_q-L).
\end{equation}

\begin{proof}[Proof of \eqref{kg1}]
Let $d=5$.  Since $.88<\gamma_{q}<.9$, $.88<\xi_q<.9$, so Lemma \ref{aqplemma} with $\rho=1$ and $\cosh(L)=\sqrt{2}$ imply
\begin{flalign*}
|A_q'(\xi_q)-\sqrt{2}|
&\leq|A_q'(\xi_q)-\cosh(\xi_q)|
+|\cosh(\xi_q)-\cosh(L)|\\
&\leq 38q+q^2
[q^2 e^{63}+q e^{31} + e^{25}](100)
+.02.
\end{flalign*}
%cosh(.9) - cosh(log(1+sqrt(2)))

%Since \(\xi_q\to L\), we have
%\[
%        A_q'(\xi_q)\to \cosh L=\sqrt2.
%\]
Using \eqref{eq:Aq-L},
\[
        1-A_q(L)
        =
        \frac{800}{\pi}q^2+o(q^2),
\]
with $o(q^2)$ term bounded by $e^{9.4}q^3 + e^{36}q^4
+e^{59}q^{5}
+
6C_\rho q^{2} q^{\frac{0.3537}{\log\log(1/q)}}+q^{2.01}$.  Therefore
\[
        \gamma_q-L
        \stackrel{\eqref{gqeq}}{=}
        \frac{
        \frac{800}{\pi}q^2+o(q^2)
        }{
        \sqrt2+o(1)
        }
        \geq
        \frac{\frac{800}{\pi}q^2 -(e^{9.4}q^3 + e^{36}q^4
+e^{59}q^{5}
+
6C_\rho q^{2} q^{\frac{0.3537}{\log\log(1/q)}}+q^{2.01})}{\sqrt{2}+.02+38q+100q^{2}[q^2 e^{63}+q e^{31} + e^{25}]}
        .
\]
That is,
\[
        \gamma_q
        \geq
        L+
        q^{2}\frac{\frac{800}{\pi} -(e^{9.4}q + e^{36}q^2
+e^{59}q^{3}
+
6C_\rho q^{\frac{0.3537}{\log\log(1/q)}}+q^{0.01})}{\sqrt{2}+.02+38q+100q^{2}[q^2 e^{63}+q e^{31} + e^{25}]},
\]
if $q<e^{-390}$.  So, choosing $q=e^{-450}$, we get
\[
\gamma_{q}\geq \log(1+\sqrt{2})+e^{-900}\frac{250}{1.435}
\geq\log(1+\sqrt{2})+e^{-895}.
\]

% C_rho <= 2[q^{2}e^{63}+qe^{31}+e^{25}]

Corollary \ref{ckcor} then yields
\[
\boxed{K_{G}\leq\frac{1}{c(f_{\sqrt{q}},f_{\sqrt{q}})}\stackrel{\eqref{cpi}}{=}\frac{\pi}{2\gamma_q}
\leq\frac{\pi}{2(\log(1+\sqrt{2})+e^{-895})}
\leq\frac{\pi}{2\log(1+\sqrt{2})}-e^{-895}.
}
\]
%
%f(x) = (pi/2)(1/(log(1+sqrt(2))+x))
%f'(0)=-1/(log(1+sqrt(2)))^2~-1.2873
%
% exp(-895) = 10^-x.  x = 895/log 10
%\snote{improvement is $10^{-389}$}

\end{proof}
\begin{proof}[Proof of \eqref{kg2}]
Let $d=3$.  In this case, we have by Lemma \ref{aqplemma}
\begin{flalign*}
|A_q'(\xi_q)-\sqrt{2}|
&\leq|A_q'(\xi_q)-\cosh(\xi_q)|
+|\cosh(\xi_q)-\cosh(L)|\\
&\leq 12q+q^2
[q^{2}e^{42}+qe^{29}+e^{15.5}](100)
+.02.
\end{flalign*}
From Remark \ref{d3aqrk},
$$1-A_q(L)=\frac{24}{\pi}q^{2}+o(q^{2}),$$
with $o(q^2)$ term bounded by
$$13 q^3+ e^{24} q^4 + e^{39} q^5
+6C_\rho q^{2}q^{\frac{0.3537}{\log\log(1/q)}}+q^{2.01},\qquad\forall\,0<q<\min(e^{-81},C_\rho^{-1/.29}).$$

Therefore
\[
        \gamma_q-L
        \stackrel{\eqref{gqeq}}{=}
        \frac{
        \frac{24}{\pi}q^2+o(q^2)
        }{
        \sqrt2+o(1)
        }
        \geq
        \frac{\frac{24}{\pi}q^2 -(13 q^3+ e^{24} q^4 + e^{39} q^5
+6C_\rho q^{2}q^{\frac{0.3537}{\log\log(1/q)}}+q^{2.01})}{\sqrt{2}+.02+12q+100q^{2}[q^{2}e^{42}+qe^{29}+e^{15.5}]}
        .
\]
That is,
\[
        \gamma_q
        \geq
        L+
        q^{2}\frac{\frac{24}{\pi} -(13 q+ e^{24} q^2 + e^{39} q^3
+6C_\rho q^{\frac{0.3537}{\log\log(1/q)}}+q^{0.01})}{\sqrt{2}+.02+12q+100q^{2}[q^{2}e^{42}+qe^{29}+e^{15.5}]}.
\]$q<e^{-81}$.  So, choosing $q=e^{-250}$, we get
%24/pi - exp(-2.5)-6*exp(15.5)*exp(-250*0.3537/(log(250)))
%>3.97
% sqrt(2)+.02
%3.97 / (sqrt(2)+.02)=2.768
$$
\gamma_q\geq\log(1+\sqrt{2})+2.768 e^{-500}.
$$
Corollary \ref{ckcor} then yields
\[
\boxed{
K_{G}\leq\frac{1}{c(f_{\sqrt{q}},f_{-\sqrt{q}})}\stackrel{\eqref{cpi}}{=}\frac{\pi}{2\gamma_q}
\leq\frac{\pi}{2(\log(1+\sqrt{2})+2.768e^{-500})}
\leq\frac{\pi}{2\log(1+\sqrt{2})}-3.56e^{-500}.
}
\]
%
%f(x) = (pi/2)(1/(log(1+sqrt(2))+x))
%f'(0)=-1/(log(1+sqrt(2)))^2~-1.2873
%
% 6.56 e^-130 = 10^-x
%  x = (log(3.56) -500) / log(10)

%10^{-217}
\end{proof}

\section{Absolute Inverse Bound}

\begin{lemma}[Quantitative control of \(A_q'\)]\label{aqplemma}
% Let \(L=\log(1+\sqrt2)=\operatorname{arsinh}(1)\), and let
% \(L<\rho<R\).  Suppose that
% \[
% B_q(z)=\sin z+q\Psi_d(z)+E_q(z)
% \]
% on \(|z|\le R\), with
% \[
% \sup_{|z|\le R}|E_q(z)|\le C_Bq^2.
% \]
% Write
% \[
% B_q(z)=\sum_{\substack{n\ge1\\ n\ {\rm odd}}}a_n(q)z^n,
% \qquad
% A_q(t)=\sum_{\substack{n\ge1\\ n\ {\rm odd}}}|a_n(q)|t^n.
% \]
For any \(0\le t< \rho\),
\[
|A_q'(t)-\cosh t|
\le
qM_{d,1}(\rho)
+
q^2
[q^2 e^{63}+q e^{31} + e^{25}]
\frac{\rho}{(\rho-t)^2}\text{, if }d=5
\]
\[
|A_q'(t)-\cosh t|
\le
qM_{d,1}(\rho)
+
q^2
[q^{2}e^{42}+qe^{29}+e^{15.5}]
\frac{\rho}{(\rho-t)^2}\text{, if }d=3
\]
Here
%For \(d=3\),
\[
% M_{3,0}(\rho)
% =
% \frac{7\sinh\rho+\sinh(3\rho)}{2\sqrt\pi},
% \qquad
M_{3,1}(\rho)
=
\frac{7\cosh\rho+3\cosh(3\rho)}{2\sqrt\pi},
% For \(d=5\),
% \[
% M_{5,0}(\rho)
% =
% \frac{6\sinh\rho+5\sinh(3\rho)+\sinh(5\rho)}{8\sqrt\pi},
% \]
% and
% \[
\qquad
M_{5,1}(\rho)
=
\frac{6\cosh\rho+15\cosh(3\rho)+5\cosh(5\rho)}{8\sqrt\pi}.
\]
% (6*cosh(1)+15*cosh(3)+5*cosh(5))/(8*sqrt(pi))
%(7*cosh(1)+3*cosh(3))/(2*sqrt(pi))

% Define
% \[
% \varepsilon_{d,0}
% =
% qM_{d,0}(\rho)
% +
% C_Bq^2\frac{\rho}{R-\rho},
% \]
% and
% \[
% \varepsilon_{d,1}
% =
% qM_{d,1}(\rho)
% +
% C_Bq^2\frac{R}{(R-\rho)^2}.
% \]
% If
% \[
% \varepsilon_{d,0}<\sinh\rho-1,
% \]
% then the root \(\gamma_q\) of \(A_q(\gamma_q)=1\) lies in \([0,\rho]\), and
% \[
% \operatorname{arsinh}(1-\varepsilon_{d,0})
% \le
% \gamma_q
% \le
% \operatorname{arsinh}(1+\varepsilon_{d,0}).
% \]
% In particular,
% \[
% |\gamma_q-L|\le \varepsilon_{d,0}.
% \]
% Furthermore,
% \[
% |A_q'(\gamma_q)-\sqrt2|
% \le
% \varepsilon_{d,1}+\sinh(\rho)\varepsilon_{d,0}.
% \]
\end{lemma}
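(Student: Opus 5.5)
Our approach is to compare the coefficients of $A_q$ termwise with those of $\sinh$, using the decomposition $a_n(q)=b_n+qc_n+e_n(q)$ from Lemma \ref{lem:inverse-expansion} (and Remark \ref{d3invrk} for $d=3$). Since $A_0(t)=\sinh t$ has derivative $\cosh t=\sum_{n\text{ odd}} n|b_n|t^{n-1}$, we write
\[
A_q'(t)-\cosh t=\sum_{\substack{n\ge1\\ n\text{ odd}}} n\big(|a_n(q)|-|b_n|\big)t^{n-1},
\]
and then apply the reverse triangle inequality $\big||a_n(q)|-|b_n|\big|\le q|c_n|+|e_n(q)|$. This splits the bound into a first-order piece $q\sum n|c_n|t^{n-1}$ and a remainder piece $\sum n|e_n(q)|t^{n-1}$. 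Termwise differentiation is justified because both series converge absolutely for $t<\rho$, by the coefficient bounds below.

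\emph{First-order piece.} We use the explicit formula \eqref{eq:bn-cn}. For $d=5$ it gives $|c_n|\le \frac{6+5\cdot 3^n+5^n}{8\sqrt\pi\, n!}$, so
\[
\sum_{n\text{ odd}} n|c_n|t^{n-1}\le \frac{6\cosh t+15\cosh(3t)+5\cosh(5t)}{8\sqrt\pi}\le M_{5,1}(\rho)
\]
for $t<\rho$, since the sum of odd terms $n\lambda^n t^{n-1}/n!$ equals $\lambda\cosh(\lambda t)$. For $d=3$, Remark \ref{d3invrk} gives $|c_n|\le (7+3^n)/(2\sqrt\pi n!)$, which yields $M_{3,1}(\rho)$ in the same way. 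This step is routine.

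\emph{Remainder piece.} This is where the work lies. We invoke \eqref{eq:coefficient-error}, namely $|e_n(q)|\le C_\rho q^2\rho^{-n}$, using the explicit constant recorded in Remark \ref{exrk}: $C_\rho=q^2e^{63}+qe^{31}+e^{25}$ for $d=5$ and $q^2e^{42}+qe^{29}+e^{15.5}$ for $d=3$. Then
\[
\sum_{n\text{ odd}} n|e_n|t^{n-1}\le C_\rho q^2\sum_{n\ge1} n\rho^{-n}t^{n-1}=C_\rho q^2\frac{1/\rho}{(1-t/\rho)^2}=C_\rho q^2\frac{\rho}{(\rho-t)^2}.
\]
This matches the stated form.

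The main obstacle is making sure that the constant $C_\rho$ in \eqref{eq:coefficient-error} really is valid at the radius $\rho$ used. The Cauchy estimate on $|z|=\rho$ needs the $O_r(q^2)$ bound of \eqref{eq:Bq-expansion} on the disk of radius $\rho\le1.08$, including the factor $1.643$ coming from multiplication by $\cos z$. Remark \ref{exrk} asserts that the stated expression already absorbs this, and we would check that claim once for $\rho=1$, the value used in the application. We would also confirm that $q$ is small enough for the Rouché argument of Lemma \ref{lem:inverse-expansion} to apply, so that $B_q$ is analytic on $\rho\D$. With these points granted, summing the two pieces gives the lemma for both $d=5$ and $d=3$.
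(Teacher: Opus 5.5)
Your proposal is correct and is essentially the paper's proof: the termwise bound $\bigl||a_n(q)|-|b_n|\bigr|\le q|c_n|+|e_n(q)|$, the identity $\sum_{n\text{ odd}} n\lambda^n t^{n-1}/n! = \lambda\cosh(\lambda t)$ giving $M_{d,1}(\rho)$, and $|e_n(q)|\le C_\rho q^2\rho^{-n}$ summed to $C_\rho q^2\rho/(\rho-t)^2$, with $C_\rho$ taken from Remark \ref{exrk}. The paper's proof also just imports that constant, so the point you flag is shared with the paper rather than a difference between the two arguments; your caution is still reasonable, since with the recorded values the factor $|\cos z|\le\cosh\rho$ does not fit under $e^{25}$ for $d=5$ (e.g.\ $\cosh(1)\,e^{24.6}>e^{25}$), though this affects only the numerical constant and not the method.
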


\begin{proof}
% Write
% \[
% \sin z+q\Psi_d(z)
% =
% \sum_{\substack{n\ge1\\ n\ {\rm odd}}}d_n(q)z^n,
% \qquad
% E_q(z)
% =
% \sum_{\substack{n\ge1\\ n\ {\rm odd}}}e_n(q)z^n.
% \]
%
% dn = bn+qcn
Define $d_{n},e_{n}$ as in \eqref{sdedef} and Lemma \ref{lem:inverse-expansion}.  Then
\[
a_n(q)=d_n(q)+e_n(q)
=b_n + qc_n +e_n(q).
\]
Since
\[
\bigl||a_n(q)|-|b_n|\bigr|
\le |a_n(q)-b_n|
\le q|c_n|+|e_n(q)|,
\]
where \(b_n\) is the coefficient of \(\sin z\) and \(c_n\) is the coefficient of
\(\Psi_d\), we get
\[
|A_q(t)-\sinh t|
\le
q\sum_{\substack{n\ge1\\ n\ {\rm odd}}}|c_n|t^n
+
\sum_{\substack{n\ge1\\ n\ {\rm odd}}}|e_n(q)|t^n.
\]
% The first sum is bounded by \(M_{d,0}(\rho)\) for \(t\le\rho\) by \eqref{eq:Psi-def} and \eqref{eq:bn-cn}.  For the
% second, \eqref{eq:coefficient-error} gives
% \[
% |e_n(q)|\le C_\rho q^2\rho^{-n}
% \]
% and therefore
% \[
% \sum_{n\ge1}|e_n(q)|t^n
% \le
% C_\rho q^{2}
% \sum_{n\ge1}\left(\frac{t}{\rho}\right)^n
% \le
% C_\rho q^2\frac{t}{\rho-t}.
% \]
% % t/rho / 1- t/rho  = t / rho - t
% This proves the bound for \(A_q(t)-\sinh t\) since Remark \ref{exrk} says $C_\rho \leq 2[q^2 e^{63}+q e^{31} + e^{25}]$ for $d=5$ and $\leq q^{2}e^{42}+qe^{29}+e^{15.5}$ for $d=3$.

We similarly bound $A_q'$ as follows.
\[
|A_q'(t)-\cosh t|
\le
q\sum_{\substack{n\ge1\\ n\ {\rm odd}}}n|c_n|t^{n-1}
+
\sum_{\substack{n\ge1\\ n\ {\rm odd}}}n|e_n(q)|t^{n-1}.
\]
The first sum is bounded by \(M_{d,1}(\rho)\), and the second satisfies
\[
\sum_{n\ge1}n|e_n(q)|t^{n-1}
\le
C_\rho q^2
\sum_{n\ge1}n\rho^{-n}t^{n-1}
=
C_\rho q^2\frac{\rho}{(\rho-t)^2}
\]
This proves the derivative bound, via \eqref{eq:Psi-def}, \eqref{eq:bn-cn}, \eqref{eq:coefficient-error} and Remark \ref{exrk}.

% Now assume \(\varepsilon_{d,0}<\sinh\rho-1\).  Since
% \[
% A_q(\rho)\ge \sinh\rho-\varepsilon_{d,0}>1,
% \]
% and \(A_q(0)=0\), the root \(\gamma_q\) lies in \([0,\rho]\).  At this root,
% \[
% A_q(\gamma_q)=1,
% \]
% so
% \[
% |\sinh\gamma_q-1|
% =
% |\sinh\gamma_q-A_q(\gamma_q)|
% \le
% \varepsilon_{d,0}.
% \]
% Therefore
% \[
% \operatorname{arsinh}(1-\varepsilon_{d,0})
% \le
% \gamma_q
% \le
% \operatorname{arsinh}(1+\varepsilon_{d,0}).
% \]
% Since \(\operatorname{arsinh}'(x)\le1\) for \(x\ge0\), and
% \(L=\operatorname{arsinh}(1)\), this implies
% \[
% |\gamma_q-L|\le \varepsilon_{d,0}.
% \]
% Finally,
% \[
% |A_q'(\gamma_q)-\sqrt2|
% \le
% |A_q'(\gamma_q)-\cosh\gamma_q|
% +
% |\cosh\gamma_q-\cosh L|.
% \]
% The first term is at most \(\varepsilon_{d,1}\), while the second is at most
% \[
% \sinh(\rho)|\gamma_q-L|
% \le
% \sinh(\rho)\varepsilon_{d,0},
% \]
% because \(\gamma_q,L\in[0,\rho]\).  This proves the final estimate.
\end{proof}

\section{Numerical Bounds}\label{app1}

This section records the numerical input used to make explicit estimates of $C_R$.  The main results are \eqref{eq:H-remainder-product-final} together with the numerically verified \eqref{eq:product-R108-final}, \eqref{eq:product-R108-final2}, \eqref{eq:product-R108-final3}, \eqref{eq:product-R108-final4}.  For any $z\in\C$, denote $\mathrm{erf}(z)\colonequals\frac{2}{\sqrt{\pi}}\int_{0}^{z}e^{-t^{2}}dt$ as the line integral in $\C$.  For any $0<R\leq1.1$, let
\begin{equation}\label{krdef}
        K_R\colonequals\{\sin z\colon\ |z|\le R\}\subset S\colonequals\{z\in\C\colon|\Re(z)|<1\}.
\end{equation}
For any \(w\in K_R\), write
\[
        T_w\colonequals\frac1{1-w^2},
        \qquad
        \Phi_w(a,b)\colonequals T_w(a^2+b^2-2wab),\qquad\forall\,a,b\in\R,
\]
and define $p_{z}(u,v)$ as in \eqref{eq:pz}.  Let \(G_z(a,b)\) be defined as in \eqref{gzdef}.  For mixed derivatives,
\[
        \partial_a^r\partial_b^sG_w(a,b)
        =4\partial_a^{r-1}\partial_b^{s-1}p_w(a,b),\qquad\forall\,r,s\geq1.
\]
Thus, for example,
\[
\partial_a^3\partial_bG_w
        =4(\Phi_a^2-\Phi_{aa})p_w,\qquad
        \partial_a^2\partial_b^2G_w
        =4(\Phi_a\Phi_b-\Phi_{ab})p_w,\qquad
        \partial_a\partial_b^3G_w
        =4(\Phi_b^2-\Phi_{bb})p_w.
\]
The pure fourth derivatives are evaluated from the one-dimensional formula
\begin{equation}\label{eq:one-dimensional-pure-derivative-input}
        \int_{\mathbb R}\operatorname{sgn}(v-b)p_w(a,v)\,dv
        =\frac{e^{-a^2}}{\sqrt\pi}\operatorname{erf}
        \!\left(\frac{wa-b}{\sqrt{1-w^2}}\right),
\end{equation}
which implies
\begin{equation}\label{eq:Ga-formula-for-remainder}
        \partial_aG_w(a,b)
        =-\frac{2e^{-a^2}}{\sqrt\pi}
        \operatorname{erf}\!\left(\frac{wa-b}{\sqrt{1-w^2}}\right).
\end{equation}
Put
\[
        s\colonequals1-w^2,
        \qquad
        \lambda\colonequals\frac{w}{\sqrt{s}},
        \qquad
        \tau_a\colonequals\frac{wa-b}{\sqrt{s}},
        \qquad
        \tau_b\colonequals\frac{wb-a}{\sqrt{s}}.
\]
Differentiating \eqref{eq:Ga-formula-for-remainder} three more times gives the explicit
pure fourth derivative
\begin{equation}\label{eq:Gaaaa-explicit-remainder}
\begin{aligned}
        \partial_a^4G_w(a,b)
        &=-\frac{2e^{-a^2}}{\sqrt\pi}
        \Bigg[
        (12a-8a^3)\operatorname{erf}(\tau_a)  \\
        &\qquad\qquad
        +\frac{e^{-\tau_a^2}}{\sqrt\pi}
        \left(
        6\lambda(4a^2-2)+24a\lambda^2\tau_a
        +4\lambda^3(2\tau_a^2-1)
        \right)
        \Bigg].
\end{aligned}
\end{equation}
The formula for \(\partial_b^4G_w(a,b)\) is obtained from
\eqref{eq:Gaaaa-explicit-remainder} by interchanging \(a\) and \(b\), i.e. by replacing
\(a,\tau_a\) with \(b,\tau_b\).  The mixed fourth derivatives have the simpler density
forms
\begin{equation}\label{eq:mixed-fourth-explicit-remainder}
\begin{aligned}
        \partial_a^3\partial_bG_w(a,b)
        &=4p_w(a,b)
        \left[\frac{4(a-wb)^2}{s^2}-\frac{2}{s}\right], \\
        \partial_a^2\partial_b^2G_w(a,b)
        &=4p_w(a,b)
        \left[\frac{4(a-wb)(b-wa)}{s^2}+\frac{2w}{s}\right], \\
        \partial_a\partial_b^3G_w(a,b)
        &=4p_w(a,b)
        \left[\frac{4(b-wa)^2}{s^2}-\frac{2}{s}\right].
\end{aligned}
\end{equation}
Equations \eqref{eq:Gaaaa-explicit-remainder} and
\eqref{eq:mixed-fourth-explicit-remainder} are the formulas used in the numerical
supremum computations below.

We now record the precise fourth-order Taylor remainder that is used to bound the
\(O(q^2)\) term.  Fix real numbers \(A,B\), and define the one-variable function
\[
        \phi(t)=G_w(tA,tB),\qquad 0\le t\le \sqrt q.
\]
The integral Taylor formula gives, for complex-valued \(G_w\),
\begin{equation}\label{eq:G-one-variable-Taylor-remainder}
\begin{aligned}
        G_w(\sqrt q A,\sqrt q B)
        &=G_w(0,0)
        +\sqrt q\,\phi'(0)
        +\frac q2\phi''(0)
        +\frac{q^{3/2}}6\phi'''(0) \\
        &\quad
        +\frac{q^2}{6}\int_0^1(1-\theta)^3
        \phi^{(4)}(\theta\sqrt q)\,d\theta.
\end{aligned}
\end{equation}
Here
\begin{equation}\label{eq:directional-fourth-operator}
\begin{aligned}
        \phi^{(4)}(t)
        &=\mathcal D_{4,w}(tA,tB;A,B), \\
        \mathcal D_{4,w}(a,b;A,B)
        &=A^4G_{aaaa}(a,b)
        +4A^3B G_{aaab}(a,b)
        +6A^2B^2G_{aabb}(a,b)  \\
        &\quad
        +4AB^3G_{abbb}(a,b)+B^4G_{bbbb}(a,b).
\end{aligned}
\end{equation}
%This is the exact fourth-order directional derivative in the threshold direction
%\((A,B)\).  
%Notice that we use the integral remainder rather than a Lagrange remainder:
%for complex \(w\), the function \(t\mapsto G_w(tA,tB)\) is complex-valued, whereas the
%integral formula is valid componentwise without choosing a special intermediate point.

Let \(d\in\{3,5\}\), let \(\sigma\in\{-1,1\}\), and set
\[
        A=h_d(u),\qquad B=\sigma h_d(v).
\]
Write \(\widetilde H_q^{(d,\sigma)}\) for the analogue of \(\widetilde H_q\) obtained from
these two perturbations, and write
\begin{equation}\label{eq:Q-d-sigma-remainder-section}
        Q_{d,\sigma}(w)=
        \frac{2(\sigma w^d-w)}{\sqrt\pi\sqrt{1-w^2}}.
\end{equation}
Thus the symmetric construction is \((d,\sigma)=(5,1)\), while the asymmetric
\(h_3,-h_3\) construction is \((d,\sigma)=(3,-1)\).
After multiplying \eqref{eq:G-one-variable-Taylor-remainder} by
\((\pi/2)p_w(u,v)\) and integrating in \((u,v)\), the linear term vanishes and the cubic
term vanishes by antisymmetry.  The quadratic term is the already computed
\(qQ_{d,\sigma}(w)\).  Therefore
\begin{equation}\label{eq:Hq-exact-fourth-remainder-integral}
\begin{aligned}
        \widetilde H^{(d,\sigma)}_q(w)
        &\stackrel{\eqref{eq:directional-fourth-operator}}{=}\arcsin w+qQ_{d,\sigma}(w)  \\
        &\quad
        +\frac{\pi q^2}{12}
        \int_0^1(1-\theta)^3
        \iint_{\mathbb R^2}
        \mathcal D_{4,w}\bigl(\theta\sqrt q A,\theta\sqrt q B;A,B\bigr)
        p_w(u,v)\,du\,dv\,d\theta.
\end{aligned}
\end{equation}
Consequently the desired fourth-order bound is
\begin{equation}\label{eq:sharp-fourth-remainder-bound}
        \left|\widetilde H^{(d,\sigma)}_q(w)-\arcsin w-qQ_{d,\sigma}(w)\right|
        \le
        \frac{\pi q^2}{48}\Lambda_{d,\sigma}(w),
\end{equation}
where
\begin{equation}\label{eq:Lambda-sharp-definition}
        \Lambda_{d,\sigma}(w)=
        \iint_{\mathbb R^2}
        \sup_{a,b\in\mathbb R}
        \left|
        \mathcal D_{4,w}\bigl(a,b;h_d(u),\sigma h_d(v)\bigr)
        \right|
        |p_w(u,v)|\,du\,dv.
\end{equation}
%This is the sharper version of the bound: the supremum is taken over the actual
%fourth-order Taylor expression, not over each partial derivative separately.

%For comparison with the constants used below,
We further bound
\(\Lambda_{d,\sigma}\) by derivative-by-derivative suprema.  Define for any $w\in S$
\begin{equation}\label{eq:mj-Jj-definitions}
\begin{aligned}
        m_{j,d}(w)&\colonequals
        \sup_{a,b\in\mathbb R}
        \left|\partial_a^{4-j}\partial_b^jG_w(a,b)\right|,
        \qquad 0\le j\le4, \\
        J_{j,d}(w)&\colonequals
        \iint_{\mathbb R^2}
        |h_d(u)|^{4-j}|h_d(v)|^j |p_w(u,v)|\,du\,dv.
\end{aligned}
\end{equation}
Then for all $w\in S$
\begin{equation}\label{eq:componentwise-fourth-remainder-bound}
        \Lambda_{d,\sigma}(w)
        \stackrel{\eqref{eq:Lambda-sharp-definition}}{\le}
        \sum_{j=0}^4\binom4j m_{j,d}(w)J_{j,d}(w).
\end{equation}
%The still cruder estimate used in the first explicit extraction is obtained by setting
Further, set
%\(M_{4,d}(w)\colonequals\max_{0\leq j\leq 4}m_{j,d}(w)\) and
\[
        \mathcal J_d(w)\colonequals
        \iint_{\mathbb R^2}(|h_d(u)|+|h_d(v)|)^4|p_w(u,v)|\,du\,dv,
\]
which gives
\begin{equation}\label{eq:old-M4J-bound-from-sharp}
        \Lambda_{d,\sigma}(w)\le M_{4,d}(w)\mathcal J_d(w),
\end{equation}

where for any $w\in S$
\[
        M_{4,d}(w)\colonequals\sup_{a,b\in\mathbb R}\max_{r+s=4}
        \abs{\partial_a^r\partial_b^sG_w(a,b)},
\]
% \[
%         \mathcal J_d(w)\colonequals
%         \iint_{\mathbb R^2}
%         \left(\abs{h_d(u)}+\abs{h_d(v)}\right)^4
%         \abs{p_w(u,v)}\,du\,dv.
% \]
The fourth-order Taylor formula \eqref{eq:sharp-fourth-remainder-bound} gives: for any $w\in S$
\begin{equation}\label{eq:H-remainder-product-final}
\boxed{
        \abs{\widetilde H_q(w)-\arcsin w-qQ(w)}
        \stackrel{\eqref{eq:sharp-fourth-remainder-bound}\wedge\eqref{eq:old-M4J-bound-from-sharp}}{\le}
        q^2\frac{\pi}{48}M_{4,d}(w)\mathcal J_d(w),
        }
\end{equation}
where $\sigma\colonequals -1$ if $d=3$, $\sigma\colonequals 1$ if $d=5$, and 
\[
        Q(w)=Q_d(w)\colonequals\frac{2(\sigma w^d-w)}{\sqrt\pi\sqrt{1-w^2}}.
\]

Direct numerical maximization (available at Link \ref{oururl}) gives
\begin{equation}\label{eq:product-R108-final}
        \sup_{w\in K_{1.08}}
        \log\left(\frac{\pi}{48}M_{4,d}(w)\mathcal J_d(w)\right)
        <23.58,\text{ if }d=5
\end{equation}
\begin{equation}\label{eq:product-R108-final2}
        \sup_{w\in K_{1.09}}
        \log\left(\frac{\pi}{48}M_{4,d}(w)\mathcal J_d(w)\right)
        <24.58,\text{ if }d=5
\end{equation}
\begin{equation}\label{eq:product-R108-final3}
        \sup_{w\in K_{1.08}}
        \log\left(\frac{\pi}{48}M_{4,d}(w)\mathcal J_d(w)\right)
        <14.5,\text{ if }d=3
\end{equation}
\begin{equation}\label{eq:product-R108-final4}
        \sup_{w\in K_{1.09}}
        \log\left(\frac{\pi}{48}M_{4,d}(w)\mathcal J_d(w)\right)
        <15,\text{ if }d=3
\end{equation}

\section{Derivative Bounds}\label{app2}

We shall also need the differentiated version of the error bound \eqref{eq:H-remainder-product-final}.  Let \(0<R_-<R_+<1.1\), and define
\[
\mathcal E_q(\zeta)
=
\widetilde H_q(\sin\zeta)-\zeta-qQ(\sin\zeta).
\]
Assume that
\[
\sup_{|\zeta|\le R_+}|\mathcal E_q(\zeta)|
\le C_{R_+}q^2 .
\]
Since \(\mathcal E_q\) is holomorphic on \(|\zeta|<R_+\), Cauchy's estimate gives
\[
\sup_{|\zeta|\le R_-}|\mathcal E_q'(\zeta)|
\le
\frac{C_{R_+}}{R_+-R_-}q^2 .
\]
Equivalently,
\[
\sup_{|\zeta|\le R_-}
\left|
(\widetilde H_q\circ\sin)'(\zeta)
-
1
-
q(Q\circ\sin)'(\zeta)
\right|
\le
\frac{C_{R_+}}{R_+-R_-}q^2 .
\]
If one wants the derivative in the original variable \(w=\sin\zeta\), then
\[
\mathcal E_q'(\zeta)
=
\left[
\widetilde H_q'(w)-(\arcsin)'(w)-qQ'(w)
\right]\cos\zeta.
\]
Since
\[
\min_{|\zeta|\le R_-}|\cos\zeta|=\cos R_-,
\]
we therefore obtain the differentiated version of \eqref{eq:H-remainder-product-final}:
\begin{equation}\label{hqpbound}
\boxed{
\sup_{w\in K_{R_-}}
\left|
\widetilde H_q'(w)-(\arcsin)'(w)-qQ'(w)
\right|
\le
\frac{C_{R_+}}{(R_+-R_-)\cos R_-}q^2 ,
}
\end{equation}
%where \(K_{R_-}=\{\sin\zeta:\ |\zeta|\le R_-\}\).

\section{Lower Lipschitz bound for arcsin}

\begin{lemma}[Explicit lower Lipschitz bound for $\arcsin$]\label{lemma5}
Let \(0<r<s<1.1\), define $K_t$ as in \eqref{krdef}.  For any \(w_0\in K_r\) and $w\in K_{s}$
we have
\begin{equation}\label{asincon}
|H_0(w)-H_0(w_0)|
\ge
\frac1{\cosh s}|w-w_0|.
\end{equation}
Inequality \eqref{asincon} also holds under the following assumptions.  Let
\[
a\colonequals\frac{r+s}{2},
\qquad
\rho_{r,s}
\colonequals
\frac{s-r}{2}\cos a\,\frac{\sin a}{a}.
\]
If $w_0\in K_r$ and $|w-w_0|\leq\rho_{r,s}$, then $w\in K_s$, so \eqref{asincon} holds.
\end{lemma}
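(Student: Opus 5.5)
Since $H_0=\arcsin$, the plan is to change variables and write $w=\sin\zeta$, $w_0=\sin\zeta_0$ with $|\zeta|\le s$, $|\zeta_0|\le r$. Because $s<1.1<\pi/2$, we have $\arcsin(\sin\zeta)=\zeta$ on these disks, so $H_0(w)-H_0(w_0)=\zeta-\zeta_0$. Inequality \eqref{asincon} then becomes the upper Lipschitz bound
\[
|\sin\zeta-\sin\zeta_0|\le \cosh(s)\,|\zeta-\zeta_0|.
\]
To prove it, I will integrate $\cos$ along the segment from $\zeta_0$ to $\zeta$:
\[
\sin\zeta-\sin\zeta_0=\int_{\zeta_0}^{\zeta}\cos\xi\,d\xi .
\]
The segment lies in the convex disk $\{|\xi|\le s\}$. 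For $\xi=x+iy$ in that disk, $|\cos\xi|^2=\cos^2x+\sinh^2y\le \cosh^2 y\le\cosh^2 s$. This gives the first claim directly. I expect no obstacle here beyond checking that $\arcsin\circ\sin$ is the identity on the disk of radius $1.1$ for the principal branch; this holds because $\sin$ is injective on $|\Re\zeta|<\pi/2$ and $H_0$ is the branch with $H_0(0)=0$ on $S$, which contains $K_{1.1}$ by the computation in the proof of Lemma \ref{lem:inverse-expansion}.

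For the second claim, take $w_0=\sin\zeta_0$ with $|\zeta_0|\le r$ and $|w-w_0|\le\rho_{r,s}$. We must show $w\in K_s$, i.e. that the disk $D(w_0,\rho_{r,s})$ lies inside $\sin(\overline{s\D})$. My approach is to show that the image curve $\sin(\{|\zeta|=a\})$ stays at distance at least $\rho_{r,s}$ from $w_0$. Then, since $\sin$ is a conformal bijection of $\{|\zeta|<s\}$ onto its image, the disk lies inside $\sin(\{|\zeta|<a\})\subset K_s$.

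The key estimate is a lower Lipschitz bound for $\sin$: for $|\zeta_0|\le r$ and $|\zeta|=a$,
\[
|\sin\zeta-\sin\zeta_0|\ge c\,|\zeta-\zeta_0|\ge c\,(a-r)=c\,\frac{s-r}{2}.
\]
So it suffices to get $c\ge \cos a\,\frac{\sin a}{a}$. I would try the identity
\[
\sin\zeta-\sin\zeta_0=2\cos\frac{\zeta+\zeta_0}{2}\,\sin\frac{\zeta-\zeta_0}{2}.
\]
Here $m=(\zeta+\zeta_0)/2$ and $h=(\zeta-\zeta_0)/2$ both have modulus at most $a$. Using $|\cos m|\ge\cos|m|$ and $|\sin h|\ge|h|\frac{\sin|h|}{|h|}$ for complex arguments (both via the product expansions, where each factor $|1-m^2/((k-\frac12)^2\pi^2)|$ is at least its value at $m$ real of the same modulus), we get $|\cos m|\ge\cos a$ and $|\sin h|\ge|h|\frac{\sin a}{a}$. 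Since $\sin t/t$ is decreasing on $(0,\pi)$, this gives $|\sin\zeta-\sin\zeta_0|\ge \cos a\,\frac{\sin a}{a}\,|\zeta-\zeta_0|$, which is exactly the constant in $\rho_{r,s}$.

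The main obstacle is the topological step from the boundary estimate to containment. I would handle it by the argument principle or Rouché: for $|w-w_0|<\rho_{r,s}$, the function $\sin\zeta-w$ has the same number of zeros in $|\zeta|<a$ as $\sin\zeta-w_0$, namely one, because $|w-w_0|<|\sin\zeta-w_0|$ on $|\zeta|=a$. The boundary case of the closed disk follows by continuity, giving $w\in K_a\subset K_s$.
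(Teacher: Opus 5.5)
Your proposal is correct and follows the paper's proof. The first claim uses $\sin\zeta-\sin\zeta_0=\int_{\zeta_0}^{\zeta}\cos\xi\,d\xi$ with $|\cos\xi|\le\cosh s$ on the disk, and the second uses the factorization $2\cos\frac{\zeta+\zeta_0}{2}\sin\frac{\zeta-\zeta_0}{2}$ with the same lower bounds $\cos a$ and $\frac{\sin a}{a}$. The differences are minor: you test on the circle $|\zeta|=a$ (getting exactly $\rho_{r,s}$) where the paper uses $|\zeta|=s$ (getting $2\rho_{r,s}$), and your Rouch\'e argument makes explicit the containment step that the paper leaves implicit after proving injectivity of $\sin$ on $\overline{s\D}$.
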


\begin{proof}
Let \(w_0=\sin\zeta_0\in K_r\) and \(w=\sin\zeta\in \sin(s\mathbb D)\).
Then \(H_0(w_0)=\zeta_0\) and \(H_0(w)=\zeta\) and
%.  Since \(s\mathbb D\) is
%convex,
\[
\sin\zeta-\sin\zeta_0
=
(\zeta-\zeta_0)
\int_0^1
\cos\bigl((1-t)\zeta_0+t\zeta\bigr)\,dt.
\]
For any $\xi\in\C$ with \(|\xi|\le s\),
\[
|\cos \xi|^2
=
\cos^2(\Re\xi)+\sinh^2(\Im\xi)
\le
1+\sinh^2s
=
\cosh^2s.
\]
% cos u = cos x cos iy -sin x sin iy
%       = cosx coshy - isinx sinhy
% cos^2x cosh^2y  + sin^2x sinh^2y
% = cos^2 x  + sinh^2 y
Hence
\[
|w-w_0|
\le
\cosh s\,|\zeta-\zeta_0|
=
\cosh s\,|H_0(w)-H_0(w_0)|.
\]
That is, 
\[
|H_0(w)-H_0(w_0)|
\ge
\frac1{\cosh s}|w-w_0|.
\]

Therefore, \eqref{asincon} holds.  Now, since \(s<\pi/2\), the map \(\sin\) is injective on
\(\overline{s\mathbb D}\).  Indeed, if
\(\sin\zeta=\sin\xi\), then
\[
2\cos\left(\frac{\zeta+\xi}{2}\right)
\sin\left(\frac{\zeta-\xi}{2}\right)=0.
\]
But \(|(\zeta+\xi)/2|\le s<\pi/2\), so the cosine factor cannot vanish,
and \(|(\zeta-\xi)/2|\le s<\pi/2\), so the sine factor vanishes only if
\(\zeta=\xi\).

We first lower-bound the distance from \(K_r\) to
\(\partial\sin(s\mathbb D)\).  Let
\[
|\zeta_0|\le r,\qquad |\zeta_*|=s.
\]
Then
\[
\sin \zeta_*-\sin\zeta_0
=
2\cos\left(\frac{\zeta_*+\zeta_0}{2}\right)
\sin\left(\frac{\zeta_*-\zeta_0}{2}\right).
\]
% sin(x-y)=sinx cosy - cosx sin y
% cos(x+y)=cosx cosy - sinx sin y
%
% multiply out get
%
%cos^2(y/2) sin(x)/2
%-sin^2(x/2) sin(y)/2
%-cos^2(x/2) sin(y)/2
%+sin^2(y/2)sin(x)/2
%=sin x - sin y   /2
Since
\[
\left|\frac{\zeta_*+\zeta_0}{2}\right|\le a,
\]
we have
\[
\left|
\cos\left(\frac{\zeta_*+\zeta_0}{2}\right)
\right|
\ge \cos a.
\]
Also, if \(|u|\le a<\pi/2\), then
\[
|\sin u|\ge \frac{\sin a}{a}|u|.
\]
Indeed, writing \(u=x+iy\), and using $\sin u = \sin x \cosh y + i\cos x \sinh y$, together with $\cosh^2 y = 1+\sinh^2 y$, $\sin x/x$ is decreasing for $x>0$ and $\sinh y\geq y$ for $y>0$,
% sin u = sinx cosh y + i cosx sinhy
%|| = sin^2x cosh^2 y + cos^2x sinh^2 y
% cosh^2 = 1 + sinh^2, so 
%|| = sin^2 + sinh^2 y
\[
|\sin u|^2
=\sin^2x+\sinh^2y
\ge
\left(\frac{\sin a}{a}\right)^2(x^2+y^2).
\]
% sinx / x is monotone decreasing, so
% (sin x /x) >= sin a /a
% sinh y >= y >= y sin a/a
Applying this to \(u=(\zeta_*-\zeta_0)/2\), and using
\[
|\zeta_*-\zeta_0|\ge s-r,
\]
gives
\[
|\sin \zeta_*-\sin\zeta_0|
\ge
(s-r)\cos a\,\frac{\sin a}{a}.
\]
Thus every point within distance \(\rho_{r,s}\) of \(K_r\) lies inside
\(\sin(s\mathbb D)\).

\end{proof}

\section{Polynomial Bounds}\label{secmore}

In this section, we record some numerical bounds for the $Q$ and $Q'$ terms appearing in \eqref{eq:H-remainder-product-final} and \eqref{hqpbound}.  Let $
r\colonequals\frac{43}{40}.
$  All bounds in this section are on the disk \(|z|\le r\).  Write $z=x+iy$.  Since \(r<\pi/2\), the disk \(\{|z|\le r\}\) contains no zero of
\(\cos z\).  Hence the functions below are holomorphic on a neighborhood of
\(\{|z|\le r\}\), and the maximum modulus principle reduces the estimates to
the boundary \(|z|=r\).

For \(d=3\), we use
\[
\Psi_3(z)
=
\frac{2}{\sqrt\pi}\bigl(\sin z+\sin^3z\bigr)
=
\frac{7\sin z-\sin(3z)}{2\sqrt\pi},
\]
and
\[
Q_3(w)
=
-\frac{2(w+w^3)}{\sqrt\pi\sqrt{1-w^2}}.
\]
Since, on \(|z|<\pi/2\), the chosen branch satisfies
\[
\sqrt{1-\sin^2 z}=\cos z,
\]
we have
\[
Q_3'(\sin z)
=
-\frac{2}{\sqrt\pi}
\frac{1+3\sin^2z-2\sin^4z}{\cos^3z}.
\]
Also
\[
(\arcsin)''(\sin z)
=
\frac{\sin z}{\cos^3z}.
\]

For \(d=5\), we use
\[
\Psi_5(z)
=
\frac{2}{\sqrt\pi}\bigl(\sin z-\sin^5z\bigr)
=
\frac{6\sin z+5\sin(3z)-\sin(5z)}{8\sqrt\pi},
\]
and
\[
Q_5(w)
=
\frac{2(w^5-w)}{\sqrt\pi\sqrt{1-w^2}}.
\]
Thus
\[
Q_5'(\sin z)
=
\frac{2}{\sqrt\pi}
\frac{-1+5\sin^4z-4\sin^6z}{\cos^3z}.
\]

We numerically verify (with details at Link \ref{oururl}) that
\begin{equation}\label{fif1}
\sup_{|z|\leq 1.075}|\Psi_3(z)|\leq2.22.
\end{equation}
\begin{equation}\label{fif2}\sup_{|z|\leq 1.075}|\Psi_5(z)|\leq4.1.
\end{equation}
\begin{equation}\label{fif3}\sup_{|z|\leq 1.075}|(\arcsin)''(\sin z)|\leq8.2.\end{equation}
\begin{equation}\label{fif4}\sup_{|z|\leq 1.075}|Q_3'(\sin z)|\leq22.3.\end{equation}
\begin{equation}\label{fif5}\sup_{|z|\leq 1.075}|Q_5'(\sin z)|\leq8.21.\end{equation}

\section{Sixth Order Perturbation Bound}\label{secphi}

In this section, we record an explicit form of the error term of the expansion \eqref{eq:theorem44-expansion} both in the $d=5$ and $d=3$ cases, culminating in \eqref{phi5bd} and \eqref{phi3bd}.  Define
\begin{equation}\label{phidef}
        \varphi_d(\eta)
        \colonequals
        4\pi\frac{H_\eta^{(d,\sigma)}(i)}{i},
\end{equation}
where \(d=5,\sigma=1\) denotes the symmetric \(h_5,h_5\) perturbation of the sign function and
\(d=3,\sigma=-1\) denotes the asymmetric \(h_3,-h_3\) perturbation.  Thus
\[
        A\colonequals h_d(U),\qquad B\colonequals \sigma h_d(V),
\]
with
\[
        h_3(x)\colonequals\frac{2x^3-3x}{\sqrt3\,\pi^{1/4}},
\qquad\text{and}\qquad
        h_5(x)\colonequals\frac{4x^5-20x^3+15x}{2\pi^{1/4}\sqrt{15}}.
\]

At \(z=i\), write, for all $a,b\in\R$,
\[
        p_i(a,b)
        \stackrel{\eqref{eq:pz}}{=}
        \frac{1}{\pi\sqrt2}
        \exp\left(-\frac{a^2+b^2}{2}+iab\right).
\]
\[
        G_i(a,b)
        \stackrel{\eqref{gzdef}}{=}
        \mathbb E\bigl[\operatorname{sign}(X-a)\operatorname{sign}(Y-b)\bigr],
\]
where the expectation is with respect to the complex Gaussian density \(p_i\).
$\forall$ \(0\le j\le 6\), let
\[
        m_j
        \colonequals
        \sup_{a,b\in\mathbb R}
        \left|
        \partial_a^{6-j}\partial_b^jG_i(a,b)
        \right|.
\]
As in Section \ref{app1}, we can compute derivatives of $G$ as follows:
The mixed derivatives are computed from
\[
        \partial_a^m\partial_b^nG_i(a,b)
        =
        4\,\partial_a^{m-1}\partial_b^{n-1}p_i(a,b),
        \qquad m,n\ge1.
\]
Thus, for \(1\le j\le5\),
\[
        \partial_a^{6-j}\partial_b^jG_i(a,b)
        =
        4\,\partial_a^{5-j}\partial_b^{j-1}p_i(a,b).
\]
For the two pure derivatives we use
\[
        \partial_aG_i(a,b)
        =
        -\frac{2e^{-a^2}}{\sqrt\pi}
        \operatorname{erf}\left(\frac{ia-b}{\sqrt2}\right),
\qquad
        \partial_bG_i(a,b)
        =
        -\frac{2e^{-b^2}}{\sqrt\pi}
        \operatorname{erf}\left(\frac{ib-a}{\sqrt2}\right).
\]
Therefore
\[
        \partial_a^6G_i(a,b)
        =
        \partial_a^5
        \left[
        -\frac{2e^{-a^2}}{\sqrt\pi}
        \operatorname{erf}\left(\frac{ia-b}{\sqrt2}\right)
        \right],
\qquad
        \partial_b^6G_i(a,b)
        =
        \partial_b^5
        \left[
        -\frac{2e^{-b^2}}{\sqrt\pi}
        \operatorname{erf}\left(\frac{ib-a}{\sqrt2}\right)
        \right].
\]

A numerical maximization of these explicit functions (for details see Link \ref{oururl}) gives
\[
\begin{array}{c|c|c}
        j & \text{numerical value of }m_j & \text{bound used} \\ \hline
        0 & 38.713601595\ldots & 39 \\
        1 & 6.306143426\ldots  & 6.4 \\
        2 & 2.700948949\ldots  & 2.8 \\
        3 & 1.731970161\ldots  & 1.8 \\
        4 & 2.700948949\ldots  & 2.8 \\
        5 & 6.306143426\ldots  & 6.4 \\
        6 & 38.713601595\ldots & 39 .
\end{array}
\]
Thus we shall use
\begin{equation}\label{mbd}
        (m_0,m_1,m_2,m_3,m_4,m_5,m_6)
        \le
        (39,6.4,2.8,1.8,2.8,6.4,39).
\end{equation}

Next define the one-dimensional standard-normal moments
\[
        N_{d,k}
        \colonequals
        \mathbb E_{Z\sim N(0,1)} |h_d(Z)|^k.
\]
The numerical values, rounded upward, are
\[
\begin{array}{c|ccccccc}
        k & 0&1&2&3&4&5&6 \\ \hline
        N_{3,k}
        &1&0.962&6.207&112.21&3370&1.466\cdot10^5&8.575\cdot10^6\\
        N_{5,k}
        &1&1.04&42.74&9680&4.952\cdot10^6&4.689\cdot10^9&7.289\cdot10^{12}.
\end{array}
\]

We now bound the sixth derivative of \(\varphi_d\) from \eqref{phidef}.  Since
\[
        H_\eta^{(d,\sigma)}(i)
        \stackrel{\eqref{one8}}{=}
        \frac{\pi}{2}
        \iint_{\mathbb R^2}
        G_i(\eta A,\eta B)p_i(u,v)\,du\,dv,
\]
we have
\[
        \varphi_d(\eta)
        \stackrel{\eqref{phidef}}{=}
        \frac{2\pi^2}{i}
        \iint_{\mathbb R^2}
        G_i(\eta A,\eta B)p_i(u,v)\,du\,dv.
\]
Also, for all $u,v\in\R$
\[
        |p_i(u,v)|
        =
        \frac{1}{\pi\sqrt2}e^{-(u^2+v^2)/2}
        =
        \sqrt2\,\gamma(u)\gamma(v),
\]
where \(\gamma\) is the standard normal density.  Hence
\[
\begin{aligned}
        \left|\varphi_d^{(6)}(\eta)\right|
        &\le
        2\pi^2
        \sum_{j=0}^{6}
        \binom6j
        m_j
        \iint_{\mathbb R^2}
        |h_d(u)|^{6-j}|h_d(v)|^j
        |p_i(u,v)|\,du\,dv  \\
        &=
        2\pi^2\sqrt2
        \sum_{j=0}^{6}
        \binom6j
        m_j
        N_{d,6-j}N_{d,j}.
\end{aligned}
\]
Therefore
\begin{equation}\label{phidbd}
        \sup_{\eta\in\mathbb R}
        \left|\varphi_d^{(6)}(\eta)\right|
        \le
        2\pi^2\sqrt2
        \sum_{j=0}^{6}
        \binom6j
        m_j
        N_{d,6-j}N_{d,j}.
\end{equation}

By Taylor's theorem with integral remainder,
\[
        \varphi_d(\eta)
        =
        \varphi_d(0)
        +
        \frac{\varphi_d^{(4)}(0)}{24}\eta^4
        +
        \frac{\eta^6}{5!}
        \int_0^1(1-t)^5\varphi_d^{(6)}(t\eta)\,dt,
\]
because the odd derivatives vanish and the second derivative vanishes for
the admissible perturbations.  Hence
\begin{equation}\label{phibd2}
        \left|
        \varphi_d(\eta)
        -
        \varphi_d(0)
        -
        \frac{\varphi_d^{(4)}(0)}{24}\eta^4
        \right|
        \stackrel{\eqref{phidbd}}{\le}
        C_d|\eta|^6,
\end{equation}
where
\[
        C_d
        \colonequals
        \frac{\pi^2\sqrt2}{360}
        \sum_{j=0}^{6}
        \binom6j
        m_j
        N_{d,6-j}N_{d,j}.
\]
Using the displayed numerical bounds \eqref{mbd} gives
\begin{equation}\label{cbd}
        C_3 < 2.7\cdot10^7,
        \qquad
        C_5 < 2.3\cdot10^{13}.
\end{equation}
% Indeed, the corresponding sums satisfy
% \[
%         \sum_{j=0}^{6}
%         \binom6j
%         m_j
%         M_{3,6-j}M_{3,j}
%         <
%         6.82\cdot10^8,
% \]
% and
% \[
%         \sum_{j=0}^{6}
%         \binom6j
%         m_j
%         M_{5,6-j}M_{5,j}
%         <
%         5.69\cdot10^{14}.
% \]

For the $d=5$ case, \cite{braverman13} compute
\[
        \frac{\varphi_5^{(4)}(0)}{24}
        =
        1600\sqrt2.
\]
Thus, for \(|\eta|\le10^{-2}\),
\begin{equation}\label{phi5bd}
        \boxed{
        \left|
        \varphi_5(\eta)-\varphi_5(0)-1600\sqrt2\,\eta^4
        \right|
        \stackrel{\eqref{phibd2}\wedge\eqref{cbd}}{\le}
        2.3\cdot10^{13}|\eta|^6 .
        }
\end{equation}
The same estimate in fact holds for all real \(\eta\), since the bound above
uses a global supremum over \(a,b\).  For the $d=3$ case corresponding to the \(f_{\eta},f_{-\eta}\) perturbation,
\[
        \frac{\varphi_3^{(4)}(0)}{24}
        =
        48\sqrt2.
\]
Therefore, for \(|\eta|\le10^{-2}\),
\begin{equation}\label{phi3bd}
        \boxed{
        \left|
        \varphi_3(\eta)-\varphi_3(0)-48\sqrt2\,\eta^4
        \right|\stackrel{\eqref{phibd2}\wedge\eqref{cbd}}{\le}
        2.7\cdot10^7|\eta|^6 .
        }
\end{equation}

\section{Numerical optimization of the planar K\texorpdfstring{\"{o}}{o}nig functional}\label{sectiger}

In this section, we provide more detail for Answer \ref{answer1}.  That is, we provide numerical evidence that the planar optimizer of K\"{o}nig's bilinear functional is not an alternating Krivine rounding scheme.  K\"{o}nig's bilinear functional is
\begin{equation}\label{bkdef}
B_K(f,g)
\colonequals
\int_{\mathbb R^2}\int_{\mathbb R^2}
f(x)g(y)e^{-(\|x\|_2^2+\|y\|_2^2)/2}\sin\langle x,y\rangle\,dx\,dy,
\end{equation}
defined for measurable odd $\pm1$-valued functions
\[
f,g\colon\mathbb R^2\to \{-1,1\}.
\]

\subsection{Discretization and coordinate ascent}

Let $x,y\in\R^{2}$.  Write $x=(r\cos\theta,r\sin\theta)$ and $y=(s\cos\phi,s\sin\phi)$ in polar coordinates.  
For the radial integrals we use the substitution
\[
u=\frac{r^2}{2},\qquad v=\frac{s^2}{2},
\]
so that the Gaussian factor becomes $e^{-u}e^{-v}$ and the radial integration is naturally approximated by Gauss--Laguerre quadrature.  
For the angular variables we use the trapezoidal rule on a uniform grid.  Thus $f$ and $g$ are represented by sign arrays on a polar grid with variables
\[
(r_i,\theta_a),\qquad (s_j,\phi_b),
\]
and the discretized integrated kernel is
\[
K_{(i,a),(j,b)}
=
w_i w_j \,(\theta_a - \theta_{a-1})\,(\phi_{b}-\phi_{b-1})\,
\sin\bigl(r_i s_j\cos(\theta_a-\phi_b)\bigr),
\]
where $w_i,w_j$ are the Gauss--Laguerre weights (including the Jacobian factors).

We then perform alternating best-response updates.  For any $m\geq0$, define
\[
g^{(m+1)}\colonequals\operatorname{sign}(T f^{(m)}),\qquad
f^{(m+1)}\colonequals\operatorname{sign}(T g^{(m+1)}),
\]
where, as in \cite[Section 3]{braverman13} we have
\[
(Tf)(y)\colonequals\int_{\mathbb R^2} f(x)e^{-\|x\|_2^2/2}\sin\langle x,y\rangle\,dx.
\]
Each update is optimal with the other function held fixed, so this is a coordinate-ascent procedure for the discretized $B_K$ functional, as noted in \cite{braverman13}.

\subsection{Best refined run}

Our best refined candidate used an optimization grid of size
\[
160\times 4096
\]
in radial versus angular coordinates.  The resulting non-hyperplane fixed point $(f,g)$ resembles the tiger partition from \cite{braverman13} and satisfies the discrete fixed-point equations exactly on the grid:
\[
f=\operatorname{sign}(Tg),\qquad g=\operatorname{sign}(Tf),
\]
with zero sign mismatches.  The optimized value was
\[
B_K(f,g)\approx 11.094573248978.
\]
For comparison, two sign functions in the plane have value
\[
B_K(\operatorname{sign}(x_1),\operatorname{sign}(x_1))
\stackrel{\eqref{bkdef}}{=}
4\pi\,\operatorname{arsinh}(1)
\approx 11.075667144195.
\]
Hence the numerical tiger candidate improves the hyperplane benchmark by $\approx 0.0189061047.
$

\subsection{Pictures of the optimized functions \texorpdfstring{$f$}{f} and \texorpdfstring{$g$}{g}}

Figure~\ref{fig:tiger-f} shows the optimized function $f$, and
Figure~\ref{fig:tiger-g} shows the corresponding optimized function $g$.

\begin{figure}[ht]
\centering
\includegraphics[width=0.78\textwidth]{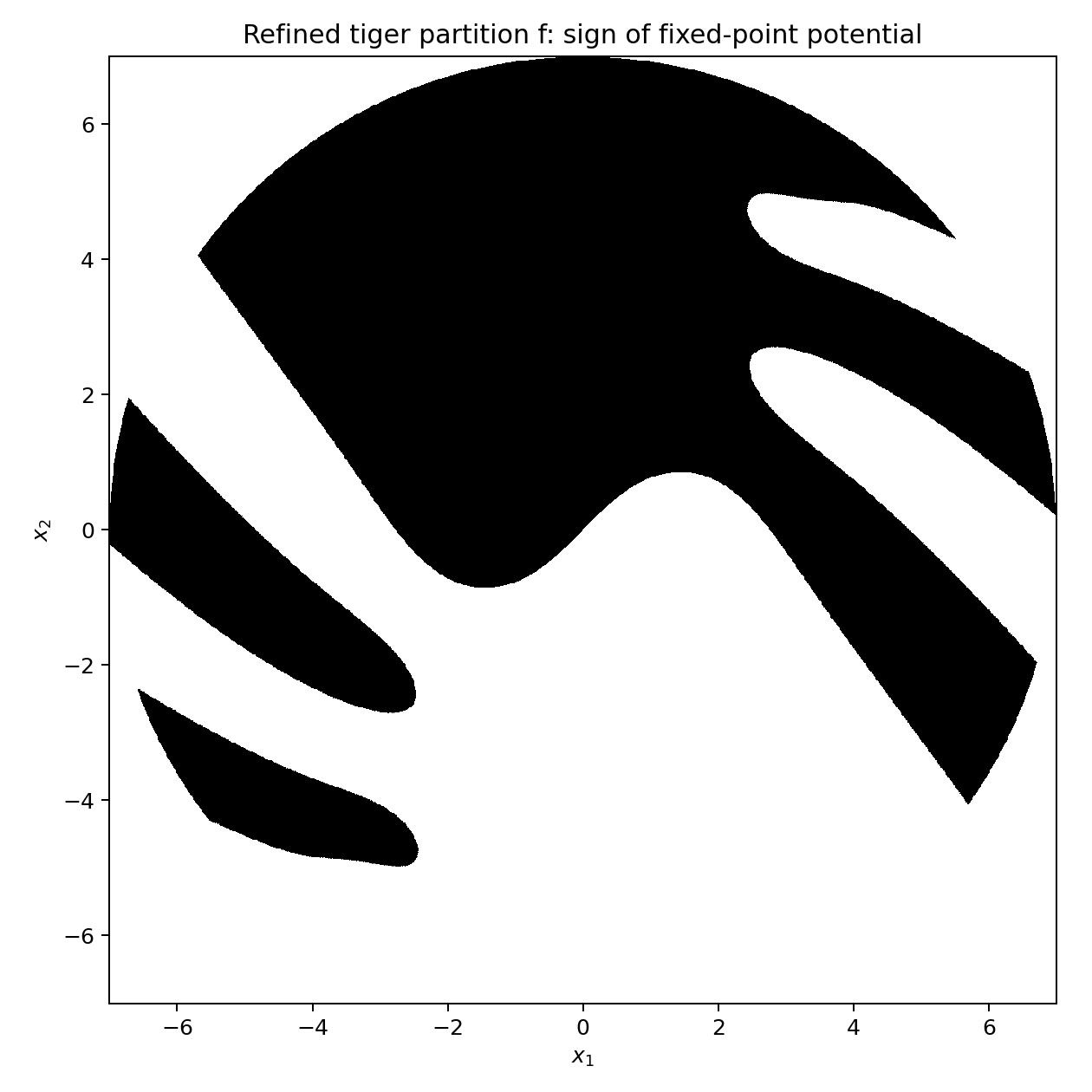}
\caption{Refined numerical tiger partition $f$.  The black region is $\{x:f(x)=1\}$ and the white region is $\{x:f(x)=-1\}$.}
\label{fig:tiger-f}
\end{figure}

\begin{figure}[ht]
\centering
\includegraphics[width=0.78\textwidth]{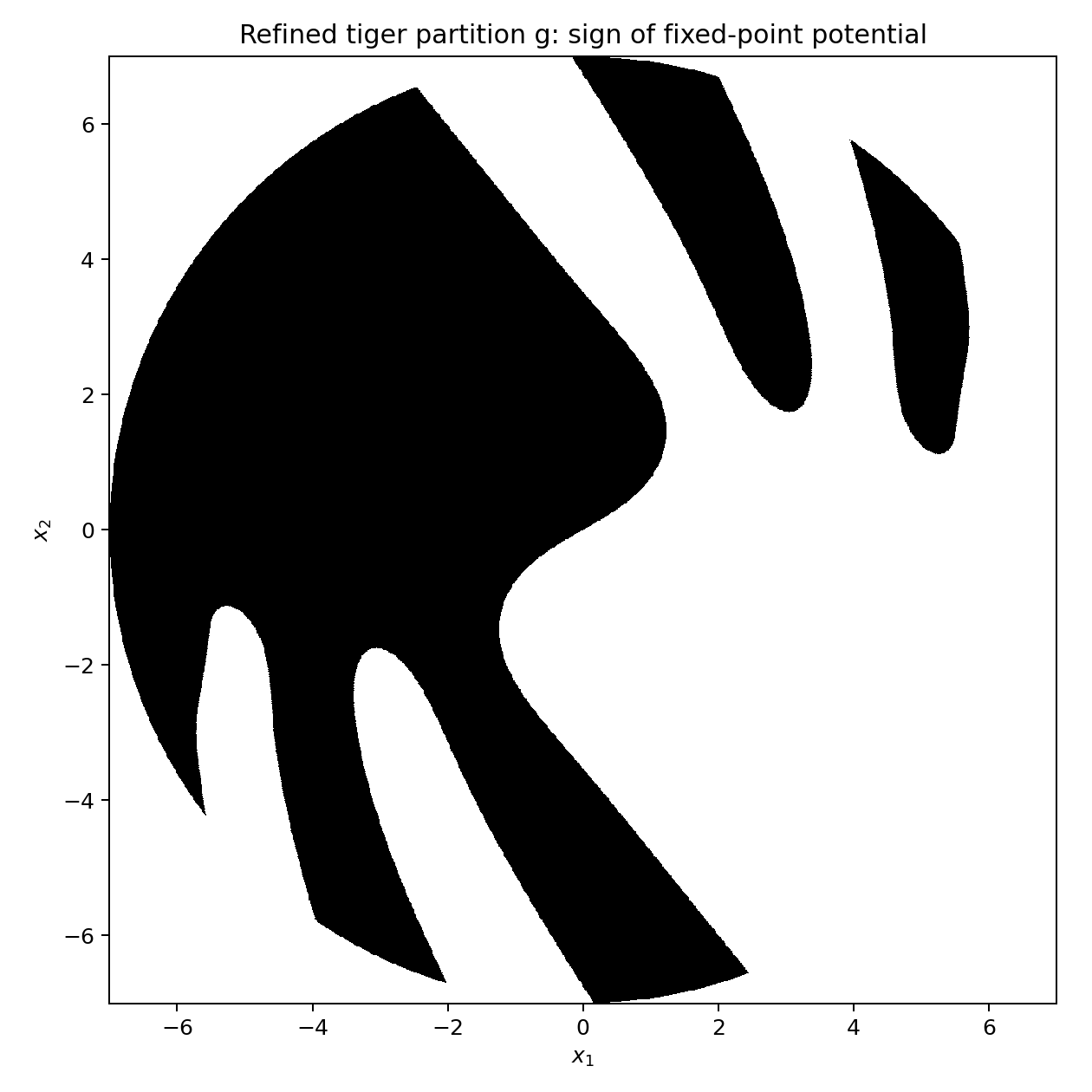}
\caption{Refined numerical tiger partition $g$.  The black region is $\{x:g(x)=1\}$ and the white region is $\{x:g(x)=-1\}$.}
\label{fig:tiger-g}
\end{figure}

\subsection{Coefficients of \texorpdfstring{$H$}{H} and \texorpdfstring{$H^{-1}$}{H inverse}}

Let $(f,g)$ denote the optimized pair from Figures \ref{fig:tiger-f} and \ref{fig:tiger-g}.  As in \eqref{one8}, \eqref{one9} and \eqref{one10}, we examine the Taylor coefficients of $H_{f,g}(t)$ and its inverse.

Using a finer coefficient grid of size
$
320\times 16384,
$
we obtained the following coefficients:

\begin{center}
\begin{tabular}{rcc}
\hline
degree & coefficient of $H_{f,g}$ & coefficient of $H_{f,g}^{-1}$ \\
\hline
$1$  & $0.6271552605725179$   & $1.594501494075198$ \\
$3$  & $0.09456848251227919$  & $-0.6112883827335565$ \\
$5$  & $0.04449372752246313$  & $-\textbf{0.028166647067356606}$ \\
$7$  & $0.025141713405143495$ & $0.11401753102956447$ \\
$9$  & $0.017053623803116084$ & $-0.12414815588752107$ \\
$11$ & $0.012245521324230563$ & $0.13002877790599687$ \\
$13$ & $0.00923026485410719$  & $-0.05335404389331977$ \\
\bottomrule
\end{tabular}
\end{center}

Thus \eqref{one8} has the approximation for any $t$ near $0$ as
\begin{flalign*}
H_{f,g}(t)
&\approx
0.6271552606\,t
+0.0945684825\,t^3
+0.0444937275\,t^5\\
&\qquad+0.0251417134\,t^7
+0.0170536238\,t^9
+\cdots
\end{flalign*}
and \eqref{one9} has the approximation for any $t$ near $0$ as
\begin{flalign*}
H_{f,g}^{-1}(z)
&\approx
1.5945014941\,z
-0.6112883827\,z^3
-\textbf{0.0281666471}\,z^5\\
&\qquad+0.1140175310\,z^7
-0.1241481559\,z^9
+\cdots.
\end{flalign*}

In particular, the coefficient of $z^5$ in $H_{f,g}^{-1}$ is negative:
\[
\widehat a_5\approx -0.0281666471.
\]
Since an alternating Krivine scheme would require
\[
\operatorname{sign}(\widehat a_{2j+1})=(-1)^j,
\]
the sign of $\widehat a_5$ violates the definition of alternating Krivine rounding scheme.  That is, it appears that the answer to Question \ref{altq} is: No.

\subsection{More detail on the fifth coefficient}

Denote the coefficients of $H_{f,g}$ as
\[
H_{f,g}(t)\colonequals\widehat\beta_1 t+\widehat\beta_3 t^3+\widehat\beta_5 t^5+\cdots.
\]
Then series reversion gives
%A_5=a_1^(-9)(3a_1^2a_3^2-a_1^3a_5)
% = a_1^(-7)(3a_3^2 - a_1 a_5)
\[
a_5=\frac{3\widehat\beta_3^2-\widehat\beta_1\widehat\beta_5}{\beta_1^7}.
\]
Using the numerical values above,
\[
3\widehat\beta_3^2-\widehat\beta_1\widehat\beta_5
\approx
-0.00107488162417<0,
\]
hence
\[
\widehat a_5<0.
\]
% 3*0.09456848251227919^2 - 0.6271552605725179*0.04449372752246313

\section{Explicit Degree 3 Hermite Perturbation}\label{seclast}

In this section we give a rigorous computer-assisted proof using Sage that
\begin{equation}\label{kg5}
K_G<\frac{\pi}{2\log(1+\sqrt{2})}-10^{-5},
\end{equation}
using interval arithmetic.  That is, we prove Theorem \ref{thm2}.  Define
\[
        \eta\colonequals0.04249900400783211,
\]
and define $f,g\colon\R^{2}\to\{-1,1\}$ by
\begin{equation}\label{fgdef}
        f(x_1,x_2)=\operatorname{sign}(x_2-\eta h_3(x_1)),
        \qquad
        g(x_1,x_2)=\operatorname{sign}(x_2+\eta h_3(x_1))
        ,\qquad\forall\,(x_{1},x_{2})\in\R^2.
\end{equation}
Let \(H_{f,g}\) be defined by \eqref{one8}.  As in \eqref{eq:normalization-factor} and \eqref{eq:paper-H}, define
$$H_{\eta}(z)
=\frac{\pi}{2}H_{f,g}(z),\qquad\forall\,|\Re(z)|<1.$$ 
As in \eqref{eq:Bq-coefficients} and \eqref{eq:Aq-def}, define
\begin{equation}\label{bdef}
        B(z)\colonequals H_{\eta}^{-1}(z)\equalscolon\sum_{n\ge1}a_nz^n
\end{equation}
\[
        A(t)\colonequals\sum_{n\ge1}|a_n|t^n.
\]
Recall from Section \ref{secquant} and \eqref{cpi} that if $\gamma>0$ satisfies $A(\gamma)=1$, and if \(A(L)<1<A(0.9)\), then $\gamma$ lies in \((L,0.9)\), and the corresponding Grothendieck bound is
\begin{equation}\label{kggbd}
        \KG\le \frac{\pi}{2\gamma}.
\end{equation}

% I=int_R mu, I^2 = 1/pi *2pi * \int re^-r^2
%        =1
\subsection{Exact coefficient formula used in the certificate}  
% Denote 
% $\mu(dx)\colonequals\pi^{-1/2}e^{-x^2}\,dx$.  Note that $\int_{\R}\mu(x)dx=1$.  
$\forall$ $m\geq0$, denote
\begin{equation}\label{herdef}
h_{m}(x)\colonequals\frac{(-1)^{m}}{\sqrt{2^{m}m!\sqrt{\pi}}}e^{x^{2}}\Big(\frac{d}{dx}\Big)^{m}e^{-x^{2}},
\qquad
        \psi_m(x)\colonequals\pi^{1/4}h_m(x),\quad\forall\,x\in\R.
\end{equation}
Then \((\psi_m)_{m\ge0}\) is orthonormal in \(L_2(\mu)\), where
\begin{equation}\label{mudef}
d\mu(x)=\pi^{-1/2}e^{-x^2}\,dx.
\end{equation}
Note that $\int_{\R}d\mu(x)=1$.  Also $(\Psi_{m,n})_{m,n\geq0}$ is an orthonormal basis of \(L_2(\mu\otimes\mu)\), where
\[
\Psi_{m,n}(x_1,x_2)\colonequals\psi_m(x_1)\psi_n(x_2),\qquad\forall\,m,n\geq0
\]

For any $n\geq0$, $w\in\R$, define
\begin{equation}\label{indef}
I_n(w)\colonequals\int_{\R}\operatorname{sign}(x-w)\psi_n(x)\,d\mu(x).
\end{equation}
% (1/\pi^1/2)\int_a^\infty e^-x^2
%  - \int_-\infty^a e^-x^2
% = -2\int_0^a e^-x^2 / pi^1/2
%
% 
Then 
\[
        I_0(w)=-\erf(w),
\]
and integration by parts with \eqref{herdef} shows, for \(n\ge1\) and $w\in\R$,
\begin{equation}\label{ineq}
        I_n(w)=\sqrt{\frac{2}{\pi n}}e^{-w^2}\psi_{n-1}(w).
\end{equation}
%\int e^-x^2 sign(x-w) e^x^2 (d/dx)^m e^-x^2
%=\int[sign(x-w)]'(d/dx)^m-1 e^-x^2
%=-2(d/dx)^m-1 e^-x^2 |_x=w
%= 2e^-w^2 h_m-1   -cm / cm-1  pi^-1/2
%= 2/sqrt(2m)   pi^-1/2

For the two sign functions $f,g$, define their Hermite coefficients by
\begin{flalign*}
\widehat{f}(m,n)
&\colonequals
\int_{\mathbb R^2} f(x_1,x_2)\Psi_{m,n}(x_1,x_2)\,d\mu(x_1)d\mu(x_2)\\
&\stackrel{\eqref{fgdef}}{=}\int_{\R^{2}}\mathrm{sign}(x_{2}-\eta h_3 (x_1))\psi_m(x_1)\psi_n (x_2)d\mu(x_{1})d\mu(x_2)\\
&\stackrel{\eqref{indef}}{=}\int_{\R}\psi_m(x_1)I_n(\eta h_3(x_1))d\mu(x_1), \qquad\qquad\forall\,m,n\geq0,
\end{flalign*}
where \eqref{ineq} allows $\widehat{f}(m,n)$ to be written as a one-dimensional integral, which is a much simpler task to estimate on a computer than a two-dimensional integral.  Similarly,
\[
\widehat{g}(m,n)
\colonequals
\int_{\mathbb R^2} g(y_1,y_2)\Psi_{m,n}(y_1,y_2)\,d\mu(y_1)d\mu(y_2)
=\int_{\R}\psi_m(x_1)I_n(-\eta h_3(x_1))d\mu(x_1).
\]
Thus, in \(L_2(\mu\otimes\mu)\),
\begin{equation}\label{fgl2}
f=\sum_{m,n\ge0}\widehat{f}(m,n)\Psi_{m,n},
\qquad
g=\sum_{m,n\ge0}\widehat{g}(m,n)\Psi_{m,n}.
\end{equation}

Define
\begin{equation}\label{ckdef}
        \beta_k\colonequals\frac{\pi}{2}\sum_{m=0}^k\widehat{f}(m,k-m)\widehat{g}(m,k-m).
\end{equation}
Then the Taylor coefficients of \(H_\eta\) are
\begin{equation}\label{htaylor}
\begin{aligned}
H_\eta(z)
&\stackrel{\eqref{eq:normalization-factor}\wedge\eqref{eq:paper-H}\wedge\eqref{eq:pz}}{=}
\frac{\pi}{2}\int_{\R^{2}\times\R^{2}}f(x)g(y)p_{z}(x_1,y_1)p_{z}(x_2,y_2)dxdy\\
&\stackrel{\eqref{fgl2}\wedge\eqref{eq:hermite-covariance}\wedge\eqref{herdef}}{=}
\frac{\pi}{2}\sum_{m,n\geq0}\widehat{f}(m,n)\widehat{g}(m,n)z^{m+n}
\stackrel{\eqref{ckdef}}{=}\sum_{k\ge1}\beta_kz^k,
\end{aligned}
\end{equation}
% int psi_m psi_n p_z
% = sqrt(pi)\int h_m h_n p_z
The inverse coefficients from \eqref{bdef} are computed by the finite recursion
\[
        a_1=\frac1{\beta_1},
\]
and for \(n\ge2\),
\begin{equation}\label{aneq}
        a_n=-\frac1{\beta_1}\sum_{k=2}^n \beta_k[z^n]
        \left(\sum_{j=1}^{n-1}a_jz^j\right)^k.
\end{equation}
%Our Sage code encloses all integrals and the inverse recursion by ball arithmetic.

% H = C1z + C2z^2 + ...
%an = n^th coefficient of H^-1
% n a_n = [x^n-1](x/H)^n

Indeed, for any $n\geq2$, 
\begin{flalign*}
0
&=[z^n] H_{\eta}(H_{\eta}^{-1}(z))
\stackrel{\eqref{htaylor}}{=}
\sum_{k\geq1}[z^n]\beta_k (H_{\eta}^{-1}(z))^k
\stackrel{\eqref{bdef}}{=}\beta_1 a_n + \sum_{k=2}^{n}[z^n]\beta_k (H_{\eta}^{-1}(z))^k\\
&\stackrel{\eqref{bdef}}{=}\beta_1 a_n + \sum_{k=2}^{n}[z^n]\beta_k \Big(\sum_{j=1}^{n-1}a_j z^j\Big)^k,
\end{flalign*}
since the terms $j\geq n$ have no contribution to the $[z^n]$ coefficient in the sum where $k\geq2$.  Solving for $a_n$ proves \eqref{aneq} which gives an algorithm for computing the coefficients of $H_{\eta}^{-1}$ when given the coefficients $\beta_k$ of $H_\eta$ from \eqref{htaylor}.

\subsection{The inverse-tail/Rouch\texorpdfstring{\'{e}}{e} condition}
The Sage code gives rigorous enclosures for
\[
        \sum_{n=1}^{201}|a_n|L^n.
\]
To pass from this finite sum to the full infinite sum
\begin{equation}\label{alsum}
        A(L)=\sum_{n\ge1}|a_n|L^n,
\end{equation}
one must also justify that \(B=H_\eta^{-1}\) is analytic on a disk of radius strictly larger than \(L\), and one needs a bound on \(B\) on that disk.  The latter is supplied by a Rouch\'{e}-type inverse-domain check.  It is convenient to work in the \(\zeta\)-coordinate and therefore define
\begin{equation}\label{calhdef}
        \mathcal H_\eta(\zeta)\colonequals H_\eta(\sin\zeta),\qquad\forall\,\zeta\in\C.
\end{equation}
% Since \(H_0(\sin\zeta)=\zeta\), \(\mathcal H_\eta\) is a small perturbation of the identity.  
%Let
% \[
%         0<r<s<\pi/2.
% \]
As we will describe below, the Sage code will verify the following for some $0<r<s<\pi/2$:
\begin{equation}\label{eq:rouche-condition}
        \sup_{|\zeta|=s}|\mathcal H_\eta(\zeta)-\zeta|<s-r.
\end{equation}
In particular, we will use $s=1.05$, $r=1.05-.08113$.  The following lemma then uses \eqref{eq:rouche-condition} to prove a tail bound.

\begin{lemma}[Rouch\'{e} inverse-tail certificate]\label{lem:rouche-tail}
Let $0<r<s<\pi/2$.  Assume \eqref{eq:rouche-condition} holds.  Then \(B=H_\eta^{-1}\) is holomorphic on \(|w|<r\).  Moreover,
\[
        \sup_{|w|\le r}|B (w)|\le \sinh s.
\]
Consequently, for every \(N\ge1\),
%\[
%        B_\eta(w)=\sum_{n\ge1}a_nw^n,
%\]
%then, 
\[
        \sum_{n>N}|a_n|L^n
        \le
        \sinh(s)\frac{(L/r)^{N+1}}{1-L/r}.
\]
% In particular, since \(\sinh(1.10)<2\), we get
% \[
%         \sum_{n>N}|a_n|L^n
%         \le
%         2\frac{(L/r)^{N+1}}{1-L/r}
% \]
% holds whenever \eqref{eq:rouche-condition} holds for any \(r<s=1.1\).
\end{lemma}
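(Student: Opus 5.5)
The plan is to apply Rouché's theorem to $\mathcal H_\eta$ on the disk $|\zeta|<s$. Fix $w$ with $|w|<r$ and compare $\zeta\mapsto\mathcal H_\eta(\zeta)-w$ with $\zeta\mapsto\zeta-w$ on the circle $|\zeta|=s$. There $|\zeta-w|\ge s-|w|>s-r$, while \eqref{eq:rouche-condition} gives
$$|(\mathcal H_\eta(\zeta)-w)-(\zeta-w)|=|\mathcal H_\eta(\zeta)-\zeta|<s-r.$$
Since $s<\pi/2$, $\sin(s\overline{\D})\subset S$ by the argument in Lemma \ref{lem:inverse-expansion}, so $\mathcal H_\eta$ is holomorphic on a neighborhood of the closed disk. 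Rouché then gives exactly one solution $\zeta(w)\in s\D$ of $\mathcal H_\eta(\zeta)=w$. Simplicity of the root forces $\mathcal H_\eta'(\zeta(w))\neq0$, and the holomorphic implicit function theorem makes $w\mapsto\zeta(w)$ holomorphic on $r\D$.

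Next set $B(w)\colonequals\sin\zeta(w)$. Then $H_\eta(B(w))=\mathcal H_\eta(\zeta(w))=w$, so $B$ is a holomorphic right inverse of $H_\eta$ on $r\D$. Since $\mathcal H_\eta(0)=0$, uniqueness gives $\zeta(0)=0$ and $B(0)=0$. Also $H_\eta'(0)=\beta_1\neq0$ (as used in \eqref{aneq}), so near $0$ any right inverse sending $0$ to $0$ coincides with the local inverse $H_\eta^{-1}$. By the identity theorem, $B$ is the analytic continuation of $H_\eta^{-1}$ to $r\D$, and its Taylor coefficients are the $a_n$ of \eqref{bdef}. For the bound, write $\zeta=x+iy$ with $|\zeta|<s$. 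Then $|\sin\zeta|^2=\sin^2x+\sinh^2y$, and since $|\sin x|\le\sinh|x|$ and $\sinh^2$ is increasing on $[0,\infty)$,
$$|\sin\zeta|^2\le\sinh^2|x|+\sinh^2|y|\le\sinh^2\bigl(\sqrt{x^2+y^2}\bigr),$$
where the second inequality uses the superadditivity of $t\mapsto\sinh^2\sqrt t$ (it has a convex power series in $t$ vanishing at $0$). Hence $\sup_{|w|\le r}|B(w)|\le\sinh s$, taking a limit $r'\uparrow r$ if one wants the closed disk.

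Finally, Cauchy's estimates on circles $|w|=r'<r$, letting $r'\to r$, give $|a_n|\le\sinh(s)\,r^{-n}$. Since $L<r$ in the intended application (which must be assumed, or the bound is vacuous), summing the geometric series gives
$$\sum_{n>N}|a_n|L^n\le\sinh(s)\sum_{n>N}(L/r)^n=\sinh(s)\frac{(L/r)^{N+1}}{1-L/r}.$$

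The only delicate point is identifying the Rouché branch with the Taylor-series inverse of \eqref{bdef}: one has to check that the branch passes through $0$ and that the local inverse is unique. Both follow from $\beta_1\neq0$, which Rouché's single-root count also guarantees since $\mathcal H_\eta'(0)=H_\eta'(0)\cos0$.
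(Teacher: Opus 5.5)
Your argument is the paper's proof: Rouch\'{e} on $|\zeta|=s$ comparing $\mathcal H_\eta(\zeta)-w$ with $\zeta-w$, then $B=\sin\circ\mathcal H_\eta^{-1}$, the bound $|\sin\zeta|\le\sinh|\zeta|$, Cauchy's estimate $|a_n|\le\sinh(s)r^{-n}$, and a geometric tail. Your additional steps fill in points the paper leaves tacit: simplicity of the Rouch\'{e} root, identifying the branch with the Taylor inverse via $\beta_1\neq0$, and noting that $L<r$ is implicitly required.

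One claim is false and needs repair: $s<\pi/2$ does \emph{not} imply $\sin(s\overline{\D})\subset S$. For example, $|1+i|=\sqrt2<\pi/2$, yet $\Re\sin(1+i)=\sin 1\cosh 1\approx1.30$. The computation in Lemma \ref{lem:inverse-expansion} only gives this containment for $s\le 27/25$, which does cover the application $s=1.05$. For general $s<\pi/2$, holomorphy of $\mathcal H_\eta$ on a neighborhood of $|\zeta|\le s$ must be taken as part of the hypothesis, as the paper implicitly does.
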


\begin{proof}
Fix $w\in\C$ with \(|w|\le r\).  Suppose $|\zeta|=s$.  By the triangle inequality
\[
        |\zeta-w|\ge s-r.
\]
By \eqref{eq:rouche-condition},
\[
        |\mathcal H_\eta(\zeta)-\zeta|<s-r\le |\zeta-w|.
\]
Hence the function \(\zeta\mapsto\mathcal H_\eta(\zeta)-w=H_\eta(\zeta)-\zeta+\zeta-w\) and the function \(\zeta\mapsto\zeta-w\) have the same number of zeros in \(|\zeta|<s\), namely one, by Rouch\'{e}'s theorem.  Thus \(\mathcal H_\eta^{-1}\) is holomorphic on \(|w|<r\), with values in \(|\zeta|<s\).  Therefore
\[
        B (w)\stackrel{\eqref{bdef}\wedge\eqref{calhdef}}{=}\sin(\mathcal H_\eta^{-1}(w))
\]
% B = H^-1
% H^-1 = ? sin (cal H^-1)
% cal H = H o sin
% cal H^-1 = sin^-1 H^-1
is holomorphic on \(|w|<r\), and
\[
        |B (w)|\le \sup_{|\zeta|\le s}|\sin\zeta|
        \le \sinh s.
\]
Cauchy's estimate gives \(|a_n|\le \sinh(s)r^{-n}\).  Summing the geometric tail proves the claim.
\end{proof}

The numerical output quoted below will use
\[
        r=1.05 - \delta,
        \qquad
        s=1.05,
\]
where $\delta\colonequals0.08113$,
%\colonequals0.08112042094580829143524169921875\leq.08113$
so by Lemma \ref{lem:rouche-tail} with $N=201$, we get
\begin{equation}\label{atail}
        \sum_{n>N}|a_n|L^n
        \leq\sinh(1.05)\frac{(L/(1.05-\delta))^{202}}{1-L/(1.05-\delta)}
        \leq7\cdot10^{-8}.
\end{equation}
%delta=0.08113, L=log(1+sqrt(2)), sinh(1.05)*(L/(1.05-delta))^(202) / (1 - L/(1.05-delta))
This tail is valid since the inequality
\[
        \sup_{|\zeta|=1.05}|\mathcal H_\eta(\zeta)-\zeta|<\delta
\]
is verified by our Sage code \verb!rouche_v9.sage!, which took about 54 hours to run. Since the tail bound \eqref{atail} is small, it remains to accurately estimate the finite sum to index $201$.

\subsection{Certifying \texorpdfstring{$A(L)<1$}{A(L)<1}}
The successful Sage run of \verb!groth_series_verify! gave the following rigorous ball enclosures for the finite part of the sum:
\[
\begin{aligned}
        \sum_{n=1}^{201}|a_n|L^n
        &\le
        0.9999926809247332727295843168961274199283134071967509
        +4.27\cdot10^{-58},
\end{aligned}
\]
with $(a_n)$ computed via \eqref{aneq} and \eqref{ckdef}.  Combining this estimate with \eqref{alsum} and \eqref{atail} gives
$$
        A(L)
        \le        0.99999268092473327272958431
        +7\cdot10^{-8}
        \le 0.999992750924734.
$$
In particular,
\begin{equation}\label{ten1}
        A(L)<1-7.249\cdot10^{-6}.
\end{equation}
% Even with the more conservative \(r=1.00\) tail one still obtains
% \[
%         A(L)<1-7.3189\cdot10^{-6},
% \]
% so the conclusion is insensitive to the tail choice.

The same run also certified the derivative, using term-by-term differentiation
\[
        \sup_{L\le t\le0.9}A'(t)
        \le
        1.4459221948078752122929593376044317888266688361343087
        +5.71\cdot10^{-58}.
\]
In particular,
\begin{equation}\label{ten2}
        \sup_{L\le t\le0.9}A'(t)<1.446.
\end{equation}

\subsection{Certifying \texorpdfstring{$A(.9)$}{A(.9)} and Sage integration compatibility}
Below we will apply the intermediate value theorem to the function $A$.  We verified $A(L)<1$, so it remains to check
\[
        A(0.9)>1.
\]
Since all terms in \(A(t)\) are nonnegative, it is enough to use only the first two odd inverse coefficients:
\[
        A(0.9)\ge |a_1|(0.9)+|a_3|(0.9)^3.
\]
The included file \texttt{arb\_h3\_A09\_certificate\_panel.sage} computes rigorous ball enclosures for \(a_1\) and \(a_3\), then proves
\[
        |a_1|(0.9)+|a_3|(0.9)^3>1.02.
\]
This proves \(A(0.9)>1\).
 
The Sage code performs a rigorous interval-ball subdivision integral on \([-M,M]\): on each subinterval \(I\), the integrand is evaluated on a real ball enclosing \(I\), so the range of the integrand on \(I\) is enclosed, and multiplying by the interval width encloses the integral over \(I\).  The tails outside \([-M,M]\) are bounded by the Cauchy-Schwarz inequality as
\[
        \int_{|x|>M}|\psi_m(x)I_n(\pm\eta h_3(x))|\,d\mu(x)
        \le \sqrt{\mu(|x|>M)},
\]
(using also $|I_n(w)|\leq1$ for all $w\in\R$ by Cauchy-Schwarz),
and for any $M>0$
\[
        \mu(|x|>M)\stackrel{\eqref{mudef}}{=}\operatorname{erfc}(M)
        \le \frac{e^{-M^2}}{\sqrt\pi M}.
\]
Thus the \(A(0.9)>1\) check is self-contained and does not rely on a Sage-provided integration method.

\subsection{Concluding the \texorpdfstring{$K_G$}{KG} Bound}
\begin{proof}[Proof of Theorem \ref{thm2}] Since all above inequalities are certified by Sage, the inequalities
\[
        A(L)<1<A(0.9)
\]
imply that there exists \(\gamma\in(L,0.9)\) with \(A(\gamma)=1\) by the Intermediate Value Theorem.  Moreover, by the Mean Value Theorem,
\begin{equation}\label{lasteq}
        1-A(L)=A(\gamma)-A(L)
        \le
        \left(\sup_{L\le t\le0.9}A'(t)\right)(\gamma-L).
\end{equation}
Using the certified values above from \eqref{ten1} and \eqref{ten2}
\[
        1-A(L)
        \ge
        7.249\cdot10^{-6},
\qquad
        \sup_{L\le t\le0.9}A'(t)
        \le
        1.446.
\]
Thus
\[
        \gamma-L
        \stackrel{\eqref{lasteq}}{\ge}
        5.013\cdot10^{-6}.
\]
%(7.249* 10^(-6))/1.446
%Therefore
\[
        \KG
        \stackrel{\eqref{kggbd}}{\le}
        \frac{\pi}{2\gamma}
        \le
        \frac{\pi}{2(L+5.013\cdot10^{-6})}.
\]
% Numerically,
% \[
%         %\boxed{
%         \KG<1.782203606913.
%         %}
% \]
That is,
\[
        \boxed{
        \KG<\frac{\pi}{2\log(1+\sqrt2)}-1.013\cdot10^{-5}.
        }
\]
% L=log(1+sqrt(2)), pi/(2*L) - pi/(2*(L+5.013*10^(-6))) 

\end{proof}

\noindent\textbf{Acknowledgement.}  ChatGPT 5.5 assisted in the preparation of this manuscript.

\bibliographystyle{amsalpha}

\end{document}